\numberwithin{equation}{section}
\numberwithin{equation}{section}
\theoremstyle{plain}
\newtheorem{theorem}[equation]{Theorem}
\newtheorem{conjecture}[equation]{Conjecture}
\newtheorem{lemma}[equation]{Lemma}
\newtheorem{corollary}[equation]{Corollary}
\theoremstyle{definition}
\newtheorem{remark}[equation]{Remark}
\newtheorem{nonsec}[equation]{}
\theoremstyle{remark}
\newcommand{\R}{\mathbb{R}}
\newcommand{\B}{\mathbb{B}}
\newcounter{alphabet}
\newcounter{minutes}\setcounter{minutes}{\time}
\newcounter{hours}\setcounter{hours}{\time}
\begin{document}
\bibliographystyle{amsplain}
\title
{
Inclusion properties of the triangular ratio metric balls
}

\def\thefootnote{}
\footnotetext{
\texttt{\tiny File:~\jobname .tex,
          printed: \number\year-\number\month-\number\day,
          \thehours.\ifnum\theminutes<10{0}\fi\theminutes}
}
\makeatletter\def\thefootnote{\@arabic\c@footnote}\makeatother

\author[O. Rainio]{Oona Rainio}

\keywords{Distance ratio metric, hyperbolic geometry, hyperbolic metric, metric balls, triangular ratio metric}
\subjclass[2010]{Primary 51M10; Secondary 51M16}
\begin{abstract}
Inclusion properties are studied for balls of the triangular ratio metric, the hyperbolic metric, the $j^*$-metric, and the distance ratio metric defined in the unit ball domain. Several sharp results are proven and a conjecture about the relation between triangular ratio metric balls and hyperbolic balls is given. An algorithm is also built for drawing triangular ratio circles or three-dimensional spheres.
\end{abstract}
\maketitle

\noindent\textbf{Author information.}\\ 
email: \texttt{ormrai@utu.fi},\\
ORCID: 0000-0002-7775-7656,\\
affiliation: University of Turku, FI-20014 Turku, Finland\\
\textbf{Availability of data and material.} Not applicable, no new data was generated.\\
\textbf{Competing interests.} There are no competing interests.\\
\textbf{Funding.} My research was funded by the Finnish Culture Foundation.\\
\textbf{Acknowledgments.} 
I wish to thank Prof. Matti Vuorinen for suggesting this topic to me and for all his help and support. 

\section{Introduction}

In the field of geometric function theory, the study focuses on different geometrical properties of several types of functions, including for instance conformal, quasiconformal, and quasiregular mappings. It is often useful to construct a metric space to investigate behaviour of metrics under these mappings to understand better how they distort distances. Especially, the hyperbolic metric is an important subject of study because of its conformal invariance, sensitivity to boundary variation, monotonicity with respect to domain, and intricate features of hyperbolic geometry \cite[p. 191-192]{hkv}. Numerous generalizations of the hyperbolic metric called hyperbolic type metrics have also been defined to create geometrical systems sharing some of these properties, including the way the hyperbolic metric measures distances between two points $x,y$ in a domain $G$ by taking into account their position with respect to the boundary $\partial G$ of the domain.

For a domain $G\subsetneq\R^n$, one of the hyperbolic type metrics is the \emph{triangular ratio metric} $s_G:G\times G\to[0,1],$  \cite[(1.1), p. 683]{chkv}
\begin{align}
s_G(x,y)=\frac{|x-y|}{\inf_{z\in\partial G}(|x-z|+|z-y|)}, 
\end{align}
which was originally introduced by P. H\"ast\"o in 2002 \cite{h}. This metric has been studied much \cite{chkv, fhmv, hkvz, ird, sch, fss, sinb, sqm} because, despite its relatively simple definition, computing the value of the infimum in the denominator is a non-trivial task in the case where the domain $G$ is, for instance, the unit ball $\B^n$. In fact, if $G=\B^3$ and the boundary of this domain is a spherical mirror, the correct point $z$ coincides with the point that the light ray sent from the point $x$ must hit to reflect into the point $y$ and, because of this relation to optics, the problem has a very long history explained in \cite{fhmv}.

Given the point $x$ in a domain $G\subsetneq\R^n$ and some radius $0<t<1$, the $x$-centered triangular ratio metric ball with this radius $t$ is defined as $B_s(x,t)=\{y\in G\text{ }|\text{ }s_G(x,y)<t\}$. To study the different mappings, it is often useful to know what is the largest ball $B_d(x,r_0)$ defined with some other metric $d$ that is included in the triangular ratio metric ball $B_s(x,t)$ or the smallest ball $B_d(x,r_1)$ containing the ball $B_s(x,t)$. This type of an inclusion question has been recently studied for the triangular metric by S. Hokuni, R. Klén, Y. Li, and M. Vuorinen \cite{h16} in such cases where the domain $G$ is the upper half-space or the punctured real space, and Klén and Vuorinen have also researched the inclusion properties of several hyperbolic metrics other than the triangular ratio metric in the unit ball \cite{k13} and other domains \cite{k12}. 

In this article we continue the earlier research by studying the inclusion properties of the balls of the triangular ratio metric defined in the unit ball. In Section 3, we focus on triangular ratio metric balls in terms of the Euclidean geometry, give the sharp inclusion result between triangular ratio metric and Euclidean balls, and also offer an algorithm for drawing triangular ratio metric circles or three-dimensional spheres. In Section 4, we study the ball inclusion for the triangular ratio metric and two other hyperbolic type metrics called the distance ratio metric and the $j^*$-metric, find an explicit formula for $j^*$-metric balls and also prove one earlier conjecture from \cite{h16}. Finally, in Section 5, we research the connection between triangular ratio metric balls and hyperbolic balls.

\section{Preliminaries}

An $n$-dimensional Euclidean ball with center $x\in\R^n$ and radius $r>0$ is denoted by $B^n(x,r)$, its closure by $\overline{B}^n(x,r)$ and its sphere by $S^{n-1}(x,r)$. For the unit ball and sphere, the simplified notations $\B^n$ and $S^{n-1}$ are used. The notation $L(x,y)$ means a Euclidean line through points $x,y\in\R^n$. For any point $x$ in a domain $G\subsetneq\R^n$, $d_G(x)$ denotes the Euclidean distance from $x$ to the boundary $\partial G$ by $d_G(x)=\inf\{|x-z|\text{ }|\text{ }z\in\partial G\}$. Trivially $d_G(x)=1-|x|$ when $G=\B^n$. For $x\in\R^2$, let $\overline{x}$ be the complex conjugate of $x$. For three points $x,y,z\in\R^n$, let $\measuredangle XYZ\in[0,\pi]$ be the magnitude of the angle between the vector from $y$ to $x$ and the vector from $y$ to $z$ and denote the origin with the letter $o$ in these angle notations.

The \emph{hyperbolic metric} can be computed in the unit ball with the following formula \cite[(4.16), p. 55]{hkv}
\begin{align}\label{def_rho}
\text{sh}^2\frac{\rho_{\B^n}(x,y)}{2}&=\frac{|x-y|^2}{(1-|x|^2)(1-|y|^2)},\quad x,y\in\B^n,
\end{align}
the \emph{distance ratio metric} $j_G:G\times G\to[0,\infty)$ in any domain $G\subsetneq\R^n$ is defined as \cite[p. 685]{chkv},
\begin{align}
j_G(x,y)=\log\left(1+\frac{|x-y|}{\min\{d_G(x),d_G(y)\}}\right),  
\end{align}
and the \emph{$j^*$-metric} $j^*_G:G\times G\to[0,1],$ has the definition \cite[2.2, p. 1123 \& Lemma 2.1, p. 1124]{hvz}
\begin{align}\label{def_j}
j^*_G(x,y)={\rm th}\frac{j_G(x,y)}{2}=\frac{|x-y|}{|x-y|+2\min\{d_G(x),d_G(y)\}},    
\end{align}
In \eqref{def_rho}, sh denotes the hyperbolic sine. Similarly, the hyperbolic cosine and tangent are denoted here by ch and th, and the inverse hyperbolic functions are arsh, arch, and arch, while ${\rm acos}$ is the arccosine function. For any hyperbolic type metric $d$ defined in a domain $G\subsetneq\R^n$, $x\in G$, and $r>0$, the $n$-dimensional ball is $B_d(x,r)=\{y\in G\text{ }|\text{ }d(x,y)<r\}$ and its sphere is $S_d(x,r)$.

\begin{lemma}\label{lem_rhoB}\emph{\cite[(4.20) p. 56]{hkv}}
For $G=\B^n$, the equality $B_\rho(x,R)=B^n(y,h)$ holds, if 
\begin{align*}
y=\frac{x(1-k^2)}{1-|x|^2k^2},\quad\text{and}\quad
h=\frac{(1-|x|^2)k}{1-|x|^2k^2}
\quad\text{and}\quad
k={\rm th}\frac{R}{2}.
\end{align*}
\end{lemma}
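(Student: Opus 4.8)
The plan is to compute the hyperbolic ball directly from the defining relation \eqref{def_rho} and to show that its bounding sphere $S_\rho(x,R)$ is a Euclidean sphere with the stated center and radius. To avoid clashing with the second argument in \eqref{def_rho}, I write the running point as $w$, so that $B_\rho(x,R)=\{w\in\B^n : \rho_{\B^n}(x,w)<R\}$. Since $t\mapsto\text{sh}^2(t/2)$ is strictly increasing on $[0,\infty)$, the condition $\rho_{\B^n}(x,w)<R$ is equivalent to $\text{sh}^2(\rho_{\B^n}(x,w)/2)<\text{sh}^2(R/2)$. Using $k={\rm th}(R/2)$ together with the identity ${\rm th}^2=\text{sh}^2/(1+\text{sh}^2)$, I first rewrite $\text{sh}^2(R/2)=k^2/(1-k^2)$, which, after substituting \eqref{def_rho} and clearing the positive denominators, turns the membership condition into
\begin{align*}
(1-k^2)|x-w|^2 < k^2(1-|x|^2)(1-|w|^2).
\end{align*}

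Next I would expand using $|x-w|^2=|w|^2-2\,x\cdot w+|x|^2$ and collect the terms by their dependence on $w$. The key point is that the coefficient of $|w|^2$ simplifies to $(1-|x|^2k^2)$, which is strictly positive because $|x|<1$ and $k<1$. Dividing through by this factor leaves a monic inequality of the form $|w|^2-2\,y\cdot w+c<0$, with
\begin{align*}
y=\frac{x(1-k^2)}{1-|x|^2k^2}\quad\text{and}\quad c=\frac{|x|^2-k^2}{1-|x|^2k^2}.
\end{align*}
This already identifies the center $y$ as claimed and exhibits $B_\rho(x,R)$ as the interior of a Euclidean sphere centred at $y$.

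Completing the square rewrites the inequality as $|w-y|^2<|y|^2-c=:h^2$, so the final task is to verify that $|y|^2-c$ equals the square of the asserted radius. Substituting $|y|^2=|x|^2(1-k^2)^2/(1-|x|^2k^2)^2$ and the value of $c$ over the common denominator $(1-|x|^2k^2)^2$, the numerator collapses to the perfect square $k^2(1-|x|^2)^2$; taking the positive square root then gives $h=k(1-|x|^2)/(1-|x|^2k^2)$, as stated. I expect no genuine obstacle here, since the argument is a chain of algebraic identities; the only care required is in the bookkeeping of the expansion, in particular checking that the numerator of $h^2$ reduces exactly to $k^2(1-|x|^2)^2$ and that the positivity of $1-|x|^2k^2$ legitimizes both the division and the extraction of the square root.
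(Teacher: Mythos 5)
Your computation is correct, and the algebra checks out at every step: with $k={\rm th}(R/2)$ one has ${\rm sh}^2(R/2)=k^2/(1-k^2)$, clearing denominators in \eqref{def_rho} gives $(1-k^2)|x-w|^2<k^2(1-|x|^2)(1-|w|^2)$, the coefficient of $|w|^2$ is indeed $1-|x|^2k^2>0$, the constant term is $(|x|^2-k^2)/(1-|x|^2k^2)$, and the numerator of $|y|^2-c$ collapses to the perfect square $k^2(1-|x|^2)^2$, yielding $h=k(1-|x|^2)/(1-|x|^2k^2)$. There is, however, nothing in the paper to compare against: Lemma \ref{lem_rhoB} is quoted without proof from \cite[(4.20), p. 56]{hkv}, so what you have produced is a correct, self-contained derivation of the cited result rather than an alternative to an in-paper argument. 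One point deserves an explicit line in a final write-up: your equivalence is established for $w\in\B^n$, so a priori it gives $B_\rho(x,R)=B^n(y,h)\cap\B^n$; to obtain the stated equality $B_\rho(x,R)=B^n(y,h)$ you should also note that $B^n(y,h)\subseteq\B^n$. This is immediate either from $|y|+h=(|x|+k)/(1+|x|k)<1$, or from the observation that the cleared inequality $(1-k^2)|x-w|^2<k^2(1-|x|^2)(1-|w|^2)$ has nonnegative left-hand side and hence forces $1-|w|^2>0$ for any $w\in\R^n$ satisfying it, so membership in $\B^n$ is automatic.
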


The inclusion result between the hyperbolic and the distance ratio metric is known in the case of the unit ball.

\begin{theorem}\label{thm_distRhoInc}\emph{\cite[Thm 3.1, p. 31 \& Cor. 3.3, p. 33]{k13}}
For $x\in G=\B^n$ and $R>0$, $B_j(x,K_0)\subseteq B_\rho(x,R)\subseteq B_j(x,K_1)$ if and only if
\begin{align*}
&0< K_0\leq\max\left\{\log\left(1+(1+|x|){\rm sh}\left(\frac{R}{2}\right)\right), \log\left(1+(1-|x|)\frac{e^R-1}{2}\right)\right\}
\quad\text{and}\\
&K_1\geq\log\left(1+(1+|x|)\frac{e^R-1}{2}\right),
\end{align*}
and, for $x\in G=\B^n$ and $K>0$, $B_\rho(x,R_0)\subseteq B_\rho(x,K)\subseteq B_\rho(x,R_1)$ if and only if
\begin{align*}
0<R_0\leq\log\left(1+\frac{2(e^K-1)}{1+|x|}\right)
\quad\text{and}\quad
R_1\geq\min\left\{2{\rm arsh}\frac{e^K-1}{1+|x|},\log\left(\frac{2e^K-1-|x|}{1-|x|}\right)\right\}.
\end{align*}
\end{theorem}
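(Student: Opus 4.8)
The plan is to use that both $\rho_{\B^n}(x,\cdot)$ and $j_G(x,\cdot)$ are invariant under the orthogonal maps of $\B^n$ fixing the line $L(o,x)$, so that after such a rotation one may assume $x=se_1$ with $s=|x|\in[0,1)$ and $e_1$ a unit coordinate vector. Because $B_\rho(x,R)$ and $B_j(x,K)$ are exactly the sublevel sets of these two functions, each of the four one–sided inclusions is equivalent to bounding one metric on the sphere of the other; concretely the sharp constants are the extrema
\[
K_0=\min_{S_\rho(x,R)}j_G(x,\cdot),\qquad K_1=\max_{S_\rho(x,R)}j_G(x,\cdot),
\]
\[
R_0=\min_{S_j(x,K)}\rho_{\B^n}(x,\cdot),\qquad R_1=\max_{S_j(x,K)}\rho_{\B^n}(x,\cdot).
\]
The reduction itself needs only a short compactness and connectedness argument: as the radius of the inner ball grows it first meets the outer sphere at a point realizing the relevant extremum. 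The remaining work is to evaluate these four extrema.

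For the pair involving $S_\rho(x,R)$ I would invoke Lemma \ref{lem_rhoB} to realize it as the Euclidean sphere $S^{n-1}(y_0,h)$, whose two intersections with $L(o,x)$ simplify to $\frac{s+k}{1+sk}e_1$ and $\frac{s-k}{1-sk}e_1$ with $k={\rm th}(R/2)$. The decisive observation is that on this sphere relation \eqref{def_rho} forces $|x-y|={\rm sh}(R/2)\sqrt{(1-s^2)(1-|y|^2)}$, so that $|x-y|$ and $d_{\B^n}(y)=1-|y|$, and hence $j_G(x,y)$, depend only on $r=|y|$. I would then check that this one–variable function is decreasing for $r\le s$ and increasing for $r\ge s$; its maximum therefore sits at the outer axis point $\frac{s+k}{1+sk}e_1$ and evaluates to $\log(1+(1+s)\frac{e^R-1}{2})$, which is $K_1$. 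Its minimum is attained at the seam $r=s$ when the sphere meets the coordinate sphere $|y|=s$, and at the inner axis point $\frac{s-k}{1-sk}e_1$ otherwise; the threshold separating the two cases is ${\rm th}(R/2)=\frac{2s}{1+s^2}$, i.e.\ $e^{R/2}=\frac{1+s}{1-s}$, and across it the two candidate values $\log(1+(1+s){\rm sh}(R/2))$ and $\log(1+(1-s)\frac{e^R-1}{2})$ exchange dominance, so the minimum is exactly their maximum $K_0$.

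For the pair involving $S_j(x,K)$ the extra difficulty is that $B_j(x,K)$ is not a Euclidean ball: the condition $j_G(x,y)<K$ splits along $|y|=s$ into a piece of the Euclidean ball $B^n(x,(e^K-1)(1-s))$ where $|y|\le s$ and the region $|x-y|<(e^K-1)(1-|y|)$ where $|y|>s$. On the boundary $S_j(x,K)$ the same trick works: via \eqref{def_rho} the hyperbolic distance $\rho_{\B^n}(x,y)$ is again a monotone function of $r=|y|$ on each piece, so its extrema lie on $L(o,x)$ or at the seam $|y|=s$. The nearest point to $x$ is in every case the axis point of $S_j(x,K)$ toward the boundary, giving $R_0=\log(1+\frac{2(e^K-1)}{1+s})$. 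For the farthest point the threshold $K=\log\frac{1+s}{1-s}$ (equivalently $(e^K-1)(1-s)=2s$) decides whether the inner Euclidean piece still reaches $|y|=s$: if it does the maximal $\rho$ occurs at the seam, giving $2\,{\rm arsh}\frac{e^K-1}{1+s}$; if it does not, the inner piece has been absorbed and the maximum occurs at the axis point beyond the origin, giving $\log\frac{2e^K-1-s}{1-s}$. Since these two expressions cross precisely at the threshold, the farthest distance is their minimum, which is $R_1$.

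The main obstacle I anticipate is not the algebra at the axis points but justifying that these axial and seam values are genuine \emph{global} extrema over the whole $(n-1)$–sphere, together with the bookkeeping of the two threshold case distinctions. Both difficulties are dissolved by the single structural fact used above — that on each sphere in question the competing metric is a function of $r=|y|$ alone — which reduces every optimization to a transparent unimodal one–variable problem and pins down exactly where the $\max$ in $K_0$ and the $\min$ in $R_1$ originate.
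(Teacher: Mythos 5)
This statement is imported: the paper cites it from Klén--Vuorinen \cite[Thm 3.1 \& Cor. 3.3]{k13} and contains no proof of its own, so there is nothing in-paper to compare line by line; your proposal is in effect a reconstruction of the cited source, and it follows the same strategy that Klén and Vuorinen use throughout their inclusion papers (reduce each inclusion to extremizing one metric over the sphere of the other, then exploit rotational symmetry to get a one-variable problem). I checked your computations and they are correct: on $S_\rho(x,R)$ the identity $|x-y|={\rm sh}(R/2)\sqrt{(1-|x|^2)(1-|y|^2)}$ does make $j$ a function of $r=|y|$ alone, decreasing for $r\le s$ and increasing for $r\ge s$ with $s=|x|$; the axis points $\frac{s+k}{1+sk}$ and $\frac{s-k}{1-sk}$ with $k={\rm th}(R/2)$ are right; the outer axis value is $\log\bigl(1+(1+s)\frac{e^R-1}{2}\bigr)$; the case split for the minimum occurs exactly at ${\rm th}(R/2)=\frac{2s}{1+s^2}$, i.e.\ $e^{R/2}=\frac{1+s}{1-s}$, and on either side of it the two candidates $\log(1+(1+s)\,{\rm sh}(R/2))$ and $\log\bigl(1+(1-s)\frac{e^R-1}{2}\bigr)$ exchange dominance so the minimum equals their maximum, as claimed. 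The second half checks out as well: on the outer piece of $S_j(x,K)$ one gets ${\rm sh}^2(\rho/2)=\frac{(e^K-1)^2(1-r)}{(1-s^2)(1+r)}$, decreasing in $r$, and on the inner spherical piece increasing in $r$; the inner piece is nonempty iff $(e^K-1)(1-s)\le 2s$, i.e.\ $e^K\le\frac{1+s}{1-s}$, which is precisely where $2\,{\rm arsh}\frac{e^K-1}{1+s}$ and $\log\frac{2e^K-1-s}{1-s}$ cross, so the minimum in $R_1$ selects the correct branch; and a direct comparison confirms your assertion that the axis point toward the boundary is the global $\rho$-minimizer in all cases, yielding $R_0$. (You also silently corrected the statement's typo: the middle ball in the second chain must be $B_j(x,K)$, not $B_\rho(x,K)$.)

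The one soft spot is your reduction of the four inclusions to sphere extrema via a ``compactness and connectedness argument'': as phrased it does not quite work, because neither metric is assumed monotone along the paths you implicitly use. The clean fix is monotonicity of both $j(x,\cdot)$ and $\rho_{\B^n}(x,\cdot)$ along Euclidean segments emanating from $x$: for $j$ this follows from concavity of $d_{\B^n}$ on the convex domain $\B^n$ (so $\min\{d(x),d(w)\}\ge\min\{d(x),d(y)\}$ for $w\in[x,y]$ while $|x-w|\le|x-y|$), and for $\rho$ from Lemma \ref{lem_rhoB}, since $\overline{B}_\rho(x,r)$ is a convex Euclidean ball containing $x$ in its interior, so $\rho(x,\cdot)$ strictly increases along rays from $x$. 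With that substitution every exterior point is dominated by a sphere point and the sharp constants are exactly your four extrema, completing the argument.
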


The following inequality holds between the triangular ratio metric and the $j^*$-metric.

\begin{theorem}\label{thm_sjbounds}
\emph{\cite[Lemma 2.1, p. 1124; Lemma 2.2, p. 1125; Lemma 2.8 \& Thm 2.9(1), p. 1129]{hvz}} For a domain $G\subsetneq\R^n$, the inequality $j^*_G(x,y)\leq s_G(x,y)\leq2j^*_G(x,y)$ holds for all $x,y\in G$ and, if $G$ is convex, then $s_G(x,y)\leq\sqrt{2}j^*_G(x,y)$.
\end{theorem}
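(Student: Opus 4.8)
The plan is to exploit that $s_G$ and $j^*_G$ share the numerator $|x-y|$, so that each claimed inequality reduces to a comparison of the two denominators. Write $D_s=\inf_{z\in\partial G}(|x-z|+|z-y|)$ and $D_j=|x-y|+2\min\{d_G(x),d_G(y)\}$, and assume without loss of generality that $d:=d_G(x)\le d_G(y)$ (the case $x=y$ is trivial, as all three quantities vanish). Then $j^*_G\le s_G$ is equivalent to $D_s\le D_j$, the bound $s_G\le 2j^*_G$ to $D_j\le 2D_s$, and the convex bound $s_G\le\sqrt2\,j^*_G$ to $D_j\le\sqrt2\,D_s$, since $s_G/j^*_G=D_j/D_s$.

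For the first inequality I would pick a nearest boundary point $z_0\in\partial G$ with $|x-z_0|=d$ and estimate $D_s\le|x-z_0|+|z_0-y|\le d+(d+|x-y|)=D_j$, using $|z_0-y|\le|z_0-x|+|x-y|$. For the second inequality I would combine the two lower bounds $D_s\ge|x-y|$ (triangle inequality) and $D_s\ge d_G(x)+d_G(y)$ (since every $z\in\partial G$ satisfies $|x-z|\ge d_G(x)$ and $|z-y|\ge d_G(y)$); adding these gives $2D_s\ge|x-y|+d_G(x)+d_G(y)\ge|x-y|+2d=D_j$. These two steps are routine and use only the triangle inequality.

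The substance of the theorem is the convex refinement, and this is where I expect the real work. The idea is to bound the contribution of each single boundary point from below. Fix any $z\in\partial G$ and let $H_z$ be a supporting hyperplane of the convex domain $G$ at $z$, which exists by convexity, so that $G$, and in particular $x$ and $y$, lies strictly in the open half-space on one side of $H_z$. Let $y^*$ be the reflection of $y$ in $H_z$; since $z\in H_z$ we have $|z-y|=|z-y^*|$, whence $|x-z|+|z-y|\ge|x-y^*|$ by the triangle inequality. Setting $p=\mathrm{dist}(x,H_z)$ and $q=\mathrm{dist}(y,H_z)$, and letting $w$ be the distance between the orthogonal projections of $x$ and $y$ onto $H_z$, one has $|x-y^*|=\sqrt{w^2+(p+q)^2}$ and $|x-y|=\sqrt{w^2+(p-q)^2}$. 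A key auxiliary observation is that $p\ge d$ and $q\ge d$: the open ball $B^n(x,d_G(x))$ lies in $G$ and hence in the half-space bounded by $H_z$, forcing $\mathrm{dist}(x,H_z)\ge d_G(x)=d$, and similarly $\mathrm{dist}(y,H_z)\ge d_G(y)\ge d$.

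It then remains to prove the elementary inequality $\sqrt2\,\sqrt{w^2+(p+q)^2}\ge\sqrt{w^2+(p-q)^2}+2d$ for all $p,q\ge d$ and $w\ge 0$. I would do this by showing that the left-minus-right expression is nondecreasing in each of $p$ and $q$, so that its minimum occurs at $p=q=d$ where the middle term vanishes, and then minimizing $\sqrt2\,\sqrt{w^2+4d^2}-w$ over $w\ge0$, whose minimum equals $2d$ at $w=2d$. Since this lower bound holds for every $z\in\partial G$, taking the infimum yields $D_s\ge\frac1{\sqrt2}(|x-y|+2d)=\frac1{\sqrt2}D_j$, i.e. $D_j\le\sqrt2\,D_s$, as desired. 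The main obstacles are organizing the supporting-hyperplane reflection cleanly, including the lower bounds $p,q\ge d$, and pinning down the sharp constant in the auxiliary inequality; the extremal configuration $p=q=d$, $w=2d$ is exactly the symmetric half-plane case, which confirms that $\sqrt2$ is best possible.
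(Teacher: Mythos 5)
Your proof is correct: the first two inequalities are exactly the standard denominator comparisons (a nearest boundary point $z_0$ gives $D_s\le 2d+|x-y|=D_j$, and $D_s\ge|x-y|$, $D_s\ge d_G(x)+d_G(y)$ give $2D_s\ge D_j$), and your convex refinement is sound, since for $g(t)=t/\sqrt{w^2+t^2}$ increasing one has $\partial_p\bigl(\sqrt{2}\sqrt{w^2+(p+q)^2}-\sqrt{w^2+(p-q)^2}\bigr)=\sqrt{2}\,g(p+q)-g(p-q)\ge(\sqrt{2}-1)g(p+q)\ge0$ (likewise in $q$), and at $p=q=d$ the claim $\sqrt{2}\sqrt{w^2+4d^2}\ge w+2d$ reduces after squaring to $(w-2d)^2\ge0$, so every $z\in\partial G$ satisfies $|x-z|+|z-y|\ge|x-y^*|\ge\tfrac{1}{\sqrt{2}}(|x-y|+2d)$ and taking the infimum finishes it. The paper itself gives no proof, quoting the result from \cite{hvz}, and the argument there is essentially yours: the supporting hyperplane at $z$ is the reduction to a half-space $H\supseteq G$, where the reflection identity $\inf_{z\in\partial H}(|x-z|+|z-y|)=|x-y^*|$ yields the sharp half-space bound $s_H\le\sqrt{2}\,j^*_H$, combined with $j^*_H\le j^*_G$; your extremal configuration $p=q=d$, $w=2d$ is precisely the known equality case, confirming $\sqrt{2}$ is best possible.
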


There are a few results that help to find the triangular ratio distance in the unit ball.

\begin{theorem}\label{thm_findzinB}
\emph{\cite[p. 138]{fhmv}} For all $x,y\in\B^n$, the infimum $\inf_{z\in S^{n-1}}(|x-z|+|z-y|)$ is found with such a point $z\in S^{n-1}$ that the line $L(0,z)$ bisects the angle between the lines $L(x,z)$ and $L(z,y)$.
\end{theorem}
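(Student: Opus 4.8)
The plan is to treat this as a constrained optimization problem and to read off the angle‑bisection condition from the first–order (Lagrange multiplier) equations. First I would observe that the infimum is in fact attained: the function $f(z)=|x-z|+|z-y|$ is continuous on the compact sphere $S^{n-1}$, so it achieves its minimum at some point $z\in S^{n-1}$. Moreover, since $x,y\in\B^n$ are interior points while $|z|=1$, we automatically have $z\neq x$ and $z\neq y$; hence $f$ is differentiable at the minimizer and the standard Lagrange multiplier argument applies.

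Next I would encode the constraint as $g(z)=|z|^2-1=0$ and compute the relevant gradients. Using $\nabla_z|z-x|=\frac{z-x}{|z-x|}$ and $\nabla_z|z-y|=\frac{z-y}{|z-y|}$, together with $\nabla g(z)=2z$, the stationarity condition $\nabla f(z)=\lambda\,\nabla g(z)$ becomes
\begin{align*}
\frac{z-x}{|z-x|}+\frac{z-y}{|z-y|}=2\lambda z .
\end{align*}
The crucial remark is that the left‑hand side is a sum of two \emph{unit} vectors, one pointing from $x$ to $z$ along the line $L(x,z)$ and the other pointing from $y$ to $z$ along the line $L(z,y)$; such a sum is the diagonal of a rhombus and therefore always lies along the bisector of the angle between its two summands.

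Finally I would interpret this geometrically. The displayed equation asserts that this bisector direction is parallel to $z$, that is, to the direction of the radial line $L(0,z)$. Consequently $L(0,z)$ bisects the angle between $L(x,z)$ and $L(z,y)$, which is exactly the reflection property claimed; indeed, solving the equation for $\lambda=\frac{z-x}{|z-x|}\cdot z$ recovers the classical law that the angle of incidence equals the angle of reflection with respect to the outward normal $z$. I expect the only points needing care to be the verification that the minimizer avoids the singularities $z=x$ and $z=y$ (handled by the interior/boundary separation above) and the bookkeeping of vector directions required to rephrase the multiplier identity as a statement about the bisector of two lines; the compactness and the differentiability are routine. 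A dimension‑by‑dimension reduction to the plane spanned by $o,x,y$ is also possible, but the Lagrange argument yields the conclusion uniformly in all dimensions $n$.
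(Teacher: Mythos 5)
Your argument is correct, but note that the paper contains no proof of this statement at all: it is quoted with a citation to \cite{fhmv}, where it arises as the reflection law for Alhazen's spherical-mirror problem, so there is no in-paper proof to match. The derivation implicit in the cited source is geometric rather than analytic: the minimizer is the point where the smallest ellipsoid of revolution with foci $x$ and $y$ meets $S^{n-1}$ tangentially, tangency forces the normals of the two surfaces to agree, and the focal reflection property of the ellipse then yields the bisection. Your Lagrange-multiplier computation is precisely the analytic counterpart of that tangency argument --- the gradient $\frac{z-x}{|z-x|}+\frac{z-y}{|z-y|}$ is the normal of the confocal ellipsoid through $z$ --- and it buys a short, coordinate-free proof uniform in $n$, at the cost of one degenerate case you should make explicit: the rhombus-diagonal step presupposes $\frac{z-x}{|z-x|}+\frac{z-y}{|z-y|}\neq 0$, i.e.\ $\lambda\neq 0$. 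This is easy to supply: if the two unit vectors cancelled, then $z$ would lie on the open segment between $x$ and $y$, which is contained in $\B^n$ by convexity, contradicting $|z|=1$. With that line added (and the trivial observation that when $x$, $y$, $z$ are collinear, or $x=y$, the stationarity condition forces $L(x,z)=L(z,y)=L(0,z)$, so the bisection holds vacuously), your proof is complete. It is also worth noting that your argument, like the cited one, establishes the bisection property only as a \emph{necessary} condition at a minimizer, not a sufficient one --- which is consistent with the paper's remark after Theorem \ref{thm_zus} that a point $z$ satisfying the bisection condition need not realize $s_{\B^2}(x,y)$.
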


\begin{lemma}\label{lem_sCollinear}
\emph{\cite[11.2.1(1) p. 205]{hkv}}
For all $x,y\in\B^n$,
\begin{align*}
s_{\B^n}(x,y)\leq\frac{|x-y|}{2-|x+y|},    
\end{align*}
where the equality holds if the points $x,y$ are collinear with the origin.
\end{lemma}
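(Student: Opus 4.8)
The plan is to recast the claimed inequality as a \emph{lower} bound on the denominator of $s_{\B^n}$: since the two numerators agree, proving $s_{\B^n}(x,y)\le|x-y|/(2-|x+y|)$ is equivalent to showing that
\[
\inf_{z\in S^{n-1}}\bigl(|x-z|+|z-y|\bigr)\ \ge\ 2-|x+y|.
\]
As this concerns the infimum, it suffices to establish the pointwise estimate $|x-z|+|z-y|\ge 2-|x+y|$ for \emph{every} $z\in S^{n-1}$.

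For a fixed $z$ with $|z|=1$, the key idea is to project onto the unit vector $z$ rather than to invoke the triangle inequality naively, since the latter only gives the weaker bound $2-|x|-|y|$. Using the supporting-hyperplane estimate at the boundary point $z$, namely $|z-x|\ge\langle z,\,z-x\rangle=1-\langle z,x\rangle$ and likewise $|z-y|\ge 1-\langle z,y\rangle$, and then summing, I obtain $|x-z|+|z-y|\ge 2-\langle z,x+y\rangle$. A single application of the Cauchy--Schwarz inequality, $\langle z,x+y\rangle\le|z|\,|x+y|=|x+y|$, completes the lower bound and hence the inequality of the lemma.

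For the equality statement I would exhibit a boundary point attaining the bound when $x$, $y$, and $o$ are collinear. Assuming first $x+y\neq o$, set $z_0=(x+y)/|x+y|\in S^{n-1}$; collinearity forces $x$ and $y$ to be scalar multiples of $z_0$, say $x=az_0$ and $y=bz_0$ with $a+b=|x+y|>0$ and $a,b<1$ (the last inequalities because $|x|,|y|<1$). A direct computation then gives $|x-z_0|+|z_0-y|=(1-a)+(1-b)=2-|x+y|$, which matches the lower bound, so the infimum equals $2-|x+y|$ and equality holds. The degenerate case $x+y=o$, i.e.\ $y=-x$, is handled separately by taking $z$ parallel to $x$, which again yields the value $2=2-|x+y|$.

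All the computations are elementary, so I expect no serious obstacle; the only point requiring care is the choice of the linear functional in the lower bound. Projecting onto $z$ is precisely what sharpens the naive triangle inequality into the tight bound $2-|x+y|$, and the choice $z_0=(x+y)/|x+y|$ is the one that simultaneously turns both projection estimates and the Cauchy--Schwarz step into equalities, which explains why collinearity with $o$ is the natural equality case (consistent with the bisection characterization of the optimal $z$ in Theorem \ref{thm_findzinB}).
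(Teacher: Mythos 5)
Your proposal is correct. Note that the paper itself gives no proof of this lemma: it is quoted verbatim from \cite[11.2.1(1), p.~205]{hkv}, so there is no internal argument to match against; your job was effectively to reprove the cited result, and you have done so completely. Your reduction to the pointwise bound $|x-z|+|z-y|\ge 2-|x+y|$ for all $z\in S^{n-1}$ is the right move, and your supporting-hyperplane estimate $|z-x|\ge\langle z,z-x\rangle=1-\langle z,x\rangle$ followed by Cauchy--Schwarz is sound. It is worth observing that your two inner-product steps compress into the one-line standard argument: applying the triangle inequality to the sum $(z-x)+(z-y)$ gives
\begin{align*}
|z-x|+|z-y|\;\ge\;|2z-(x+y)|\;\ge\;2|z|-|x+y|\;=\;2-|x+y|,
\end{align*}
which is the same estimate with the same equality analysis (equality forces $z-x$ and $z-y$ parallel to $z$, i.e.\ the collinear configuration); your version makes the geometric content (projection onto the outward normal at $z$) more explicit, which is a fair trade. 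Your equality case is also complete: since the lemma only asserts equality \emph{if} $x,y,o$ are collinear, exhibiting the attaining point $z_0=(x+y)/|x+y|$ suffices, and your identity $a+b=|x+y|>0$ indeed follows automatically from the definition of $z_0$ (so the sign issue you might worry about with $x=az_0$, $y=bz_0$ takes care of itself), while $a,b<1$ gives $|x-z_0|+|z_0-y|=2-(a+b)$ exactly. The degenerate cases $y=-x$ (and implicitly $x=y=0$, where both sides vanish) are handled correctly.
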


\begin{theorem}\label{thm_sConjugate}
\emph{\cite[Thm 3.1, p. 276]{hkvz}} If $x\in\B^2$ with $\min\{{\rm Re}(x),{\rm Im}(x)\}>0$, then
\begin{align*}
s_{\B^2}(x,\overline{x})&=|x|\text{ if }|x-\frac{1}{2}|>\frac{1}{2},\\
s_{\B^2}(x,\overline{x})&=\frac{{\rm Im}(x)}{\sqrt{(1-{\rm Re}(x))^2+{\rm Im}(x)^2}}\leq|x|\text{ otherwise.}
\end{align*}
\end{theorem}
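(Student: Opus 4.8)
The plan is to compute the denominator $D = \inf_{z \in S^1}(|x - z| + |z - \overline{x}|)$ directly, since the numerator of $s_{\B^2}(x,\overline{x})$ is simply $|x - \overline{x}| = 2\,{\rm Im}(x)$. Writing $x = a + bi$ with $a = {\rm Re}(x) > 0$ and $b = {\rm Im}(x) > 0$, I would first record the governing geometric picture: the level sets $\{z : |x-z| + |z-\overline{x}| = {\rm const}\}$ are ellipses with foci $x,\overline{x}$, so $D$ is twice the semi-major axis of the smallest such ellipse meeting $S^1$, and by Theorem \ref{thm_findzinB} the minimizing $z$ is a reflection point with $L(0,z)$ bisecting $\measuredangle XZ\overline{X}$. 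Because $x$ and $\overline{x}$ are symmetric across the real axis, there are only two types of candidate minimizers: the real-axis boundary point $z = 1$, and a conjugate pair of off-axis points $e^{\pm i\theta}$.

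Next I would carry out the minimization analytically. Parametrizing $z = (\cos\theta,\sin\theta)$ and setting $f(\theta) = |x - z| + |z - \overline{x}|$, a short computation gives
\[
f(\theta)^2 = 2u + 2\sqrt{u^2 - v}, \quad u = |x|^2 + 1 - 2a\cos\theta, \quad v = 4b^2\sin^2\theta,
\]
an even function of $\theta$. Differentiating and discarding the factor $\sin\theta$ (which produces the real-axis critical points $z = \pm 1$) leaves a single admissible off-axis critical point at $\cos\theta = a/|x|^2$, valid precisely when $a \le |x|^2$. I would flag here the one genuine subtlety: writing the bisector condition of Theorem \ref{thm_findzinB} and squaring it yields two roots, $\cos\theta = a/|x|^2$ and the extraneous $\cos\theta = a$; the latter violates the sign of the unsquared stationarity condition and must be discarded, so it is cleaner to differentiate $f^2$ directly.

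Then I would evaluate $f$ at the two candidates, obtaining $f(1) = 2\sqrt{(1-a)^2 + b^2}$ and, at the off-axis point, $f = 2b/|x|$, the latter following from the identity $u^2 - v = (1 - |x|^2)^2$ that holds at $\cos\theta = a/|x|^2$. The crux is the algebraic identity
\[
f(1)^2 - \left(\frac{2b}{|x|}\right)^2 = \frac{4(|x|^2 - a)^2}{|x|^2} \ge 0,
\]
which shows that the off-axis value is the smaller one whenever it exists. I expect this comparison, together with identifying which critical point is the global minimum in each regime (the off-axis value being a minimum, while $z=1$ turns into a local maximum once the off-axis pair appears), to be the main point requiring care.

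Finally I would assemble the cases using the equivalence $|x - \tfrac{1}{2}| > \tfrac{1}{2} \Leftrightarrow |x|^2 > a = {\rm Re}(x)$. When $|x - \tfrac12| > \tfrac12$ the off-axis critical point exists and yields the strict minimum $D = 2b/|x|$, whence $s_{\B^2}(x,\overline{x}) = 2b/D = |x|$; otherwise no off-axis critical point exists (the formula $\cos\theta = a/|x|^2$ returns a value $\ge 1$), the minimum is $D = 2\sqrt{(1-a)^2+b^2}$ attained at $z = 1$, and $s_{\B^2}(x,\overline{x}) = b/\sqrt{(1-a)^2 + b^2} = {\rm Im}(x)/\sqrt{(1 - {\rm Re}(x))^2 + {\rm Im}(x)^2}$. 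The bound $s_{\B^2}(x,\overline{x}) \le |x|$ asserted in this second case is immediate from the displayed identity, since it forces $2b/f(1) \le |x|$.
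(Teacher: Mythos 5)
Your proposal is correct, but there is nothing in the paper to compare it against: the paper does not prove this theorem, it imports it verbatim from \cite[Thm 3.1, p.~276]{hkvz}, so your argument must be judged on its own merits. Judged so, it checks out. With $x=a+bi$ and $z=e^{i\theta}$ one indeed has $f(\theta)^2=2u+2\sqrt{u^2-v}$ for $u=1+|x|^2-2a\cos\theta$ and $v=4b^2\sin^2\theta$; after discarding the factor $\sin\theta$, the squared stationarity condition reduces to $|x|^2c^2-a(|x|^2+1)c+a^2=0$ in $c=\cos\theta$, with roots exactly $c=a/|x|^2$ and $c=a$, and the sign check you flag does discard $c=a$ (there $2b^2c-au=a(|x|^2-1)<0$, while at $c=a/|x|^2$ it equals $a(1-|x|^2)>0$). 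Your two key identities are genuine: at $c=a/|x|^2$, writing $m=|x|^2$, $A=m^2-a^2$, $B=m-a^2$ gives $u^2-v=\bigl((A+B)^2-4AB\bigr)/m^2=(1-m)^2$, hence $f=2b/|x|$; and $f(1)^2-4b^2/|x|^2=4(|x|^2-a)^2/|x|^2$ holds unconditionally, which simultaneously settles the case split via $|x-\tfrac12|>\tfrac12\Leftrightarrow|x|^2>a$ and yields the bound $s_{\B^2}(x,\overline{x})\le|x|$ in the second case. Two small points to make explicit when writing this up: the remaining critical point $\theta=\pi$ is never the minimizer since $f(-1)=2|1+x|>2|1-x|=f(1)$ because $a>0$; and in the boundary case $|x|^2=a$ the off-axis critical point merges with $z=1$ and the two candidate values coincide (both give $s=|x|$), consistent with the theorem assigning that case to the second formula with a weak inequality.
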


\section{Triangular ratio metric balls in Euclidean geometry}

If triangular metric balls defined in $G=\B^n$ are origin-centered, they are equivalent to Euclidean balls, as stated by the following lemma. However, if $x\neq0$, expressing the triangular ratio metric ball $B_s(x,t)$ for $0<t<1$ in terms of Euclidean geometry becomes considerably more difficult, but with the help Lemma \ref{lem_sCollinear} and Theorem \ref{thm_sConjugate}, we can prove Lemmas \ref{lem_colliIntS} and \ref{lem_conjugS} that reveal certain points always included in the sphere $S_s(x,t)$.

\begin{lemma}\label{lem_sForx0}
For $G=\B^n$ and $0<t<1$, the origin-centered triangular ratio metric ball $B_s(0,s)$ equals the Euclidean ball $B^n(0,2t/(1+t))$.
\end{lemma}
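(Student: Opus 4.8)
The plan is to compute $s_{\B^n}(0,y)$ explicitly for an arbitrary $y\in\B^n$ and then read off the ball inclusion by solving a one-variable inequality. First I would evaluate the infimum in the denominator of the triangular ratio metric. Since every boundary point $z\in S^{n-1}$ satisfies $|0-z|=|z|=1$, the summand $|0-z|+|z-y|$ equals $1+|z-y|$, so
\begin{align*}
\inf_{z\in S^{n-1}}\bigl(|z|+|z-y|\bigr)=1+\inf_{z\in S^{n-1}}|z-y|.
\end{align*}
The remaining infimum is exactly the Euclidean distance from the interior point $y$ to the sphere, namely $1-|y|=d_{\B^n}(y)$; indeed, the reverse triangle inequality gives $|z-y|\geq|z|-|y|=1-|y|$ for every $z\in S^{n-1}$, with equality attained at $z=y/|y|$ (and at any boundary point when $y=0$). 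Hence the denominator equals $2-|y|$ and
\begin{align*}
s_{\B^n}(0,y)=\frac{|y|}{2-|y|}.
\end{align*}

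Next I would solve the defining inequality $s_{\B^n}(0,y)<t$. The scalar map $r\mapsto r/(2-r)$ is strictly increasing on $[0,1)$, so $|y|/(2-|y|)<t$ is equivalent, after clearing the positive denominator, to $|y|<t(2-|y|)$, i.e.\ $|y|(1+t)<2t$, i.e.\ $|y|<2t/(1+t)$. This set is precisely the Euclidean ball $B^n(0,2t/(1+t))$, and since $2t/(1+t)<1$ exactly when $t<1$, the ball lies inside $\B^n$ as required. This yields $B_s(0,t)=B^n(0,2t/(1+t))$.

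I do not expect any genuine obstacle in this argument: the only substantive point is the evaluation $\inf_{z\in S^{n-1}}|z-y|=1-|y|$, which is immediate from the triangle inequality as noted above, and everything else is the monotone rearrangement of a single scalar inequality. The key simplification peculiar to the origin-centred case is that $|0-z|$ is constant on $S^{n-1}$, which is what decouples the infimum and makes the computation elementary; this is exactly the feature that fails for $x\neq0$, as the surrounding discussion anticipates.
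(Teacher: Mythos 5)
Your proposal is correct and follows essentially the same route as the paper: the paper's one-line proof also rests on the identity $s_{\B^n}(0,y)=|y|/(2-|y|)$ (which it calls trivial, being the equality case of Lemma \ref{lem_sCollinear} for points collinear with the origin) followed by the same monotone rearrangement $|y|/(2-|y|)<t\Leftrightarrow|y|<2t/(1+t)$. You merely spell out the evaluation of the boundary infimum, $\inf_{z\in S^{n-1}}(|z|+|z-y|)=2-|y|$, which the paper leaves implicit, so there is nothing to correct.
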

\begin{proof}
Trivially, $y\in B_s(0,s)$ if and only if $s_{\B^n}(y,0)=|y|/(2-|y|)<t$ or, equivalently, $|y|<2t/(1+t)$.
\end{proof}

\begin{figure}[ht]
    \centering
    \begin{tikzpicture}
    \node[inner sep=0pt] at (0,0)
    {\includegraphics[width=7cm,trim={1cm 2.5cm 1cm 3cm},clip]{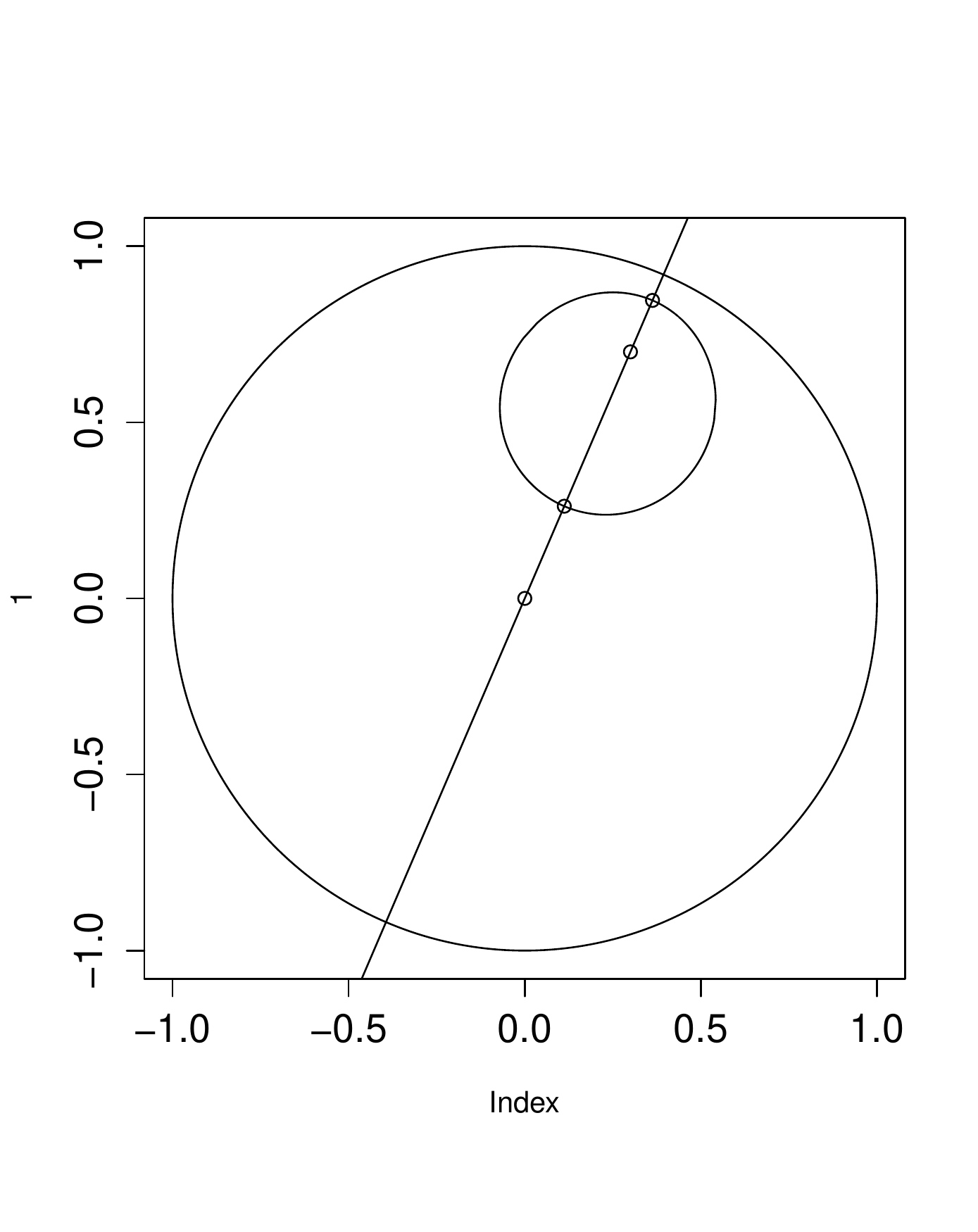}};
    \node[scale=1.1] at (1.5,2.4) {$y_0$};
    \node[scale=1.1] at (0.9,2.3) {$x$};
    \node[scale=1.1] at (0.5,1.5) {$y_1$};
    \node[scale=1.1] at (0.2,0.7) {$0$};
    \end{tikzpicture}
    \caption{The intersection points $y_0$ and $y_1$ of Lemma \ref{lem_colliIntS} for the triangular ratio metric circle $S_s(x,t)$ and the line $L(0,x)$ in the domain $G=\B^2$, when $x=0.3+0.7i$ and $t=0.5$.}
    \label{fig1}
\end{figure}

Next, we will find the intersection points of the triangular ratio metric sphere $S_s(x,t)$ and the line $L(0,x)$ that can be also seen in Figure \ref{fig1}.

\begin{lemma}\label{lem_colliIntS}
For $G=\B^n$, $x\in\B^n\setminus\{0\}$ and $0<t<1$, the triangular ratio metric sphere $S_s(x,t)$ intersects with the line $L(0,x)$ at the points
\begin{align*}
y_0=x+\frac{2tx(1-|x|)}{|x|(1+t)}\quad\text{and}\quad
y_1=x-\frac{2tx}{|x|}\min\left\{\frac{1-|x|}{1-t},\frac{1+|x|}{1+t}\right\}.
\end{align*}
\end{lemma}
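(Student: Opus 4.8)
The plan is to restrict the triangular ratio metric to the line $L(0,x)$, where Lemma \ref{lem_sCollinear} applies with equality, and then to solve a single scalar equation. Write $u=x/|x|$ for the unit vector in the direction of $x$, so that every point of $L(0,x)\cap\B^n$ has the form $y=\mu u$ with $\mu\in(-1,1)$, and $x=|x|u$. Since $x$, $y$ and the origin are collinear, Lemma \ref{lem_sCollinear} gives the exact value
\[
s_{\B^n}(x,\mu u)=\frac{|x-y|}{2-|x+y|}=\frac{\big||x|-\mu\big|}{2-\big||x|+\mu\big|}.
\]
Finding $S_s(x,t)\cap L(0,x)$ thus amounts to solving $\big||x|-\mu\big|=t\,\big(2-\big||x|+\mu\big|\big)$ for $\mu$, and the task reduces to careful bookkeeping of the absolute values.

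For the outer root I would take $\mu>|x|$, where both bars open with the obvious signs: the equation becomes $\mu-|x|=t(2-|x|-\mu)$, whose solution $\mu=(|x|(1-t)+2t)/(1+t)$ rearranges to exactly $y_0=x+2tx(1-|x|)/(|x|(1+t))$. For the inner root I take $\mu<|x|$, so $\big||x|-\mu\big|=|x|-\mu$, and then split on the sign of $|x|+\mu$. If $|x|+\mu\ge 0$ the equation gives $\mu=(|x|(1+t)-2t)/(1-t)$, and if $|x|+\mu<0$ it gives $\mu=(|x|(1-t)-2t)/(1+t)$; translating the relation $\mu=|x|-2tM$ back into the subtracted multiple $M$ yields $M=(1-|x|)/(1-t)$ in the first subcase and $M=(1+|x|)/(1+t)$ in the second.

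The decisive step is to show that the sign condition separating these two subcases is exactly $t\le|x|$ versus $t\ge|x|$, and that this matches the minimum in the claimed formula. Substituting each candidate back, one checks that $|x|+\mu=2(|x|-t)/(1-t)$ in the first subcase and $|x|+\mu=2(|x|-t)/(1+t)$ in the second, so the defining constraint $|x|+\mu\ge 0$ holds precisely when $t\le|x|$ and $|x|+\mu<0$ precisely when $t\ge|x|$. A direct cross-multiplication shows $(1-|x|)/(1-t)\le(1+|x|)/(1+t)$ if and only if $t\le|x|$; hence in each regime the algebraically admissible branch is the one with the smaller $M$, which is exactly $\min\{(1-|x|)/(1-t),(1+|x|)/(1+t)\}$, giving the stated $y_1$. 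The main obstacle is precisely this sign/minimum reconciliation: one must verify that the branch selected by the sign constraint coincides with the term picked out by the minimum, rather than merely producing a formal root. Finally, since $s_{\B^n}(x,\mu u)$ vanishes at $\mu=|x|$, tends to $1$ as $\mu\to\pm 1$, and is monotone on each side of $|x|$, there are exactly two intersection points, namely $y_0$ and $y_1$.
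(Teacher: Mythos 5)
Your proposal is correct and follows essentially the same route as the paper's own proof: parametrize $L(0,x)$, invoke the equality case of Lemma \ref{lem_sCollinear}, solve the resulting scalar equation by case analysis on the absolute values, and identify the admissible inner branch with the minimum via the equivalence $(1-|x|)/(1-t)\le(1+|x|)/(1+t)\iff t\le|x|$. Your explicit sign-consistency check and the closing monotonicity remark are slightly more detailed than the paper's ``clearly one root on each side,'' but the argument is the same.
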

\begin{proof}
Any point $y\in L(0,x)$ can be written as $y=x+rx/|x|$ with $r\in\R$. Since the points $x,y,0$ are collinear, by Lemma \ref{lem_sCollinear},
\begin{align*}
s_{\B^n}(x,y)=\frac{|x-y|}{2-|x+y|}=\frac{|r|}{2-|2|x|+r|}=
\begin{cases}
-|r|/(2+2|x|-|r|)\text{ if }r<-2|x|,\\
|r|/(2-2|x|+|r|)\text{ if }-2|x|\leq r<0,\text{ or}\\
|r|/(2-2|x|-|r|)\text{ if }r\geq0.
\end{cases}
\end{align*}
For $s_{\B^n}(x,y)=t$, we can solve
\begin{align*}
r=\begin{cases}
-2t(1+|x|)/(1+t)\text{ if }t>|x|\text{ and }r<0,\\
-2t(1-|x|)/(1-t)\text{ if }t\leq|x|\text{ and }r<0,\text{ or}\\
2t(1-|x|)/(1+t)\text{ if }r>0.
\end{cases}    
\end{align*}
Clearly, one intersection point is found with $r<0$ and the other one with $r>0$ and, since $(1-|x|)/(1-t)\leq(1+|x|)/(1+t)$ is equivalent to $t\leq|x|$, $r$ can be written as a minimum expression so that substituting these values of $r$ in $y=x+rx/|x|$ proves the lemma. 
\end{proof}

\begin{lemma}\label{lem_conjugS}
For $G=\B^2$, $x\in\B^2\setminus\{0\}$ and $0<t\leq|x|$, the triangular ratio metric circle $S_s(x,t)$ and the Euclidean circle $S^1(0,|x|)$ intersect at points 
\begin{align*}
x(2c^2-1+2c\sqrt{1-c^2}i)^{\pm1}\quad\text{with}\quad
c=\frac{t^2+\sqrt{(1-t^2)(|x|^2-t^2)}}{|x|},
\end{align*}
and, if $t>|x|$, the triangular ratio metric disk $B_s(x,t)$ contains the Euclidean circle $S^1(0,|x|)$.
\end{lemma}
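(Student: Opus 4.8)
The plan is to exploit the rotational invariance of the triangular ratio metric in $\B^2$, which is immediate from its definition since every rotation about the origin fixes $S^1$ and preserves Euclidean distances, so $s_{\B^2}(Tx,Ty)=s_{\B^2}(x,y)$ for any such rotation $T$. Given a point $y\in S^1(0,|x|)$, the points $x$ and $y$ lie on a common origin-centered circle and subtend some angle $2\alpha$ at $o$; I would rotate the whole configuration so that $x$ and $y$ are carried to the conjugate pair $\overline{w},w$ with $w=|x|e^{i\alpha}$ and $\alpha\in(0,\pi/2]$. Rotational invariance then gives $s_{\B^2}(x,y)=s_{\B^2}(w,\overline{w})$, and tracking the rotation backwards recovers $y=xe^{\pm2i\alpha}$, the sign recording on which side of $x$ the point lies.

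The core computation is to feed $w=|x|e^{i\alpha}$ into Theorem \ref{thm_sConjugate}. Writing $\mathrm{Re}(w)=|x|\cos\alpha$ and $\mathrm{Im}(w)=|x|\sin\alpha$, one checks $|w-\tfrac12|^2=|x|^2-|x|\cos\alpha+\tfrac14$, so the dichotomy in the theorem, $|w-\tfrac12|>\tfrac12$ versus $|w-\tfrac12|\le\tfrac12$, becomes exactly $\cos\alpha<|x|$ versus $\cos\alpha\ge|x|$. In the first regime the theorem yields $s_{\B^2}(w,\overline{w})=|x|$, while in the second the denominator simplifies to $\sqrt{1-2|x|\cos\alpha+|x|^2}$, giving $s_{\B^2}(w,\overline{w})=|x|\sin\alpha/\sqrt{1-2|x|\cos\alpha+|x|^2}$.

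For the intersection claim ($0<t\le|x|$) I would set this last expression equal to $t$ and solve the resulting quadratic in $c:=\cos\alpha$; the discriminant collapses to $(1-t^2)(|x|^2-t^2)$, nonnegative precisely because $t\le|x|$, and the root $c=(t^2+\sqrt{(1-t^2)(|x|^2-t^2)})/|x|$ appears. The delicate point is root selection and range-checking: I must verify $|x|\le c<1$, so that $\alpha\in(0,\pi/2)$ is genuine and lies in the second regime where the formula applies, and discard the other root as spurious (it satisfies $c<|x|$, landing in the first regime where $s=|x|\ne t$). Both inequalities reduce to elementary estimates, the upper bound $c<1$ to $(1-|x|)^2\ge0$. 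Finally, substituting $\cos2\alpha=2c^2-1$ and $\sin2\alpha=2c\sqrt{1-c^2}$ rewrites $xe^{\pm2i\alpha}$ in the stated form $x(2c^2-1+2c\sqrt{1-c^2}i)^{\pm1}$.

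For the containment claim ($t>|x|$) no equation-solving is needed: for \emph{every} $y\in S^1(0,|x|)$ the same symmetrization and Theorem \ref{thm_sConjugate} give $s_{\B^2}(x,y)=s_{\B^2}(w,\overline{w})\le|x|<t$, the inequality $s_{\B^2}(w,\overline{w})\le|x|$ being part of the theorem and also holding in the first regime where $s=|x|$; hence the whole circle lies in $B_s(x,t)$. The one case the theorem does not literally cover is $\alpha=\pi/2$, i.e. $\mathrm{Re}(w)=0$, where $w$ and $\overline{w}$ are collinear with $o$ and Lemma \ref{lem_sCollinear} gives $s_{\B^2}(w,\overline{w})=|x|$ directly. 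I expect the main obstacle to be the bookkeeping in the reduction, namely correctly identifying $\alpha$ as half the angular separation so that the answer emerges as $xe^{\pm2i\alpha}$, together with the root selection and the verification $|x|\le c<1$.
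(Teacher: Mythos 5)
Your proposal is correct and takes essentially the same route as the paper's own proof: reduction by rotational invariance to the conjugate pair $w,\overline{w}$ with $w=|x|e^{i\alpha}$, application of Theorem \ref{thm_sConjugate} with the dichotomy $\cos\alpha<|x|$ versus $\cos\alpha\ge|x|$, the same quadratic in $c=\cos\alpha$ with discriminant $(1-t^2)(|x|^2-t^2)$, selection of the plus root via the range check $|x|\le c<1$, the double-angle substitution, and the collinear edge case settled by Lemma \ref{lem_sCollinear}. The only (trivial) deviations are that the upper bound $c<1$ reduces to the strict inequality $0<(1-|x|)^2$ (true since $|x|<1$) rather than your ``$\ge$'', and that the paper places the collinear check inside the containment argument, exactly as you do for $\alpha=\pi/2$.
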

\begin{proof}
Let $x=|x|e^{ki}$ and $y=|x|e^{(k\pm2u)i}$ with $0\leq k<2\pi$ $0<u<\pi/2$. By Theorem \ref{thm_sConjugate} and the fact that triangular ratio metric is invariant under rotations around the origin,
\begin{align*}
s_{\B^2}(x,y)=\frac{|x|\sin(u)}{\sqrt{(1-|x|\cos(u))^2+|x|^2\sin(u)}}\leq|x|\quad\text{if}\quad||x|e^{ui}-\frac{1}{2}|\leq\frac{1}{2}.    
\end{align*}
Since $||x|e^{ui}-1/2|=\sqrt{|x|^2-|x|\cos(u)+1/4}$, the condition above holds if and only if $\cos(u)\geq|x|$ and, since $0<u<\pi/2$, we need to have $|x|\leq\cos(u)<1$. 

By the quadratic formula, we can solve that,
\begin{align*}
&t=\frac{|x|\sin(u)}{\sqrt{(1-|x|\cos(u))^2+|x|^2\sin(u)}}\quad\Leftrightarrow\quad
t^2=\frac{|x|^2(1-\cos^2(u))}{1+|x|^2-2|x|\cos(u)}\\
&\Leftrightarrow\quad
|x|^2\cos^2(u)-2t^2|x|\cos(u)+t^2+t^2|x|^2-|x|^2=0\\
&\Leftrightarrow\quad \cos(u)=\frac{t^2\pm\sqrt{(1-t^2)(|x|^2-t^2)}}{|x|}.
\end{align*}
Above, the sign $\pm$ needs to be a plus sign. This is because
\begin{align*}
\frac{t^2-\sqrt{(1-t^2)(|x|^2-t^2)}}{|x|}\geq|x|
\quad\Leftrightarrow\quad
|x|^2-t^2\leq-\sqrt{(1-t^2)(|x|^2-t^2)},
\end{align*}
where the difference is $|x|^2-t^2$ non-negative for $t\leq|x|$. Thus, the root with a minus sign fulfills the condition $|x|<\cos(u)<1$ if and only if $t=|x|$ and, in this special case, it is equal to the root with a plus sign. In turn,
\begin{align*}
&|x|\leq\frac{t^2+\sqrt{(1-t^2)(|x|^2-t^2)}}{|x|}<1\\
&\Leftrightarrow\quad
|x|^2-t^2\leq\sqrt{(1-t^2)(|x|^2-t^2)}
\quad\text{and}\quad
\sqrt{(1-t^2)(|x|^2-t^2)}<|x|-t^2\\
&\Leftrightarrow\quad
|x|^2\leq1
\quad\text{and}\quad
0<(1-|x|)^2,
\end{align*}
which holds for all $x\in\B^2\setminus\{0\}$ and $0<t\leq|x|$. 

Consequently, we have
\begin{align*}
\cos(u)=\frac{t^2+\sqrt{(1-t^2)(|x|^2-t^2)}}{|x|},   
\end{align*}
and the points $y=|x|e^{(k\pm2u)i}$ can be written as $x(e^{ui})^{\pm1}$, where
\begin{align*}
e^{2ui}&=\cos(2u)+\sin(2u)i=2\cos^2(u)-1+2\sin(u)\cos(u)i\\
&=2\cos^2(u)-1+2\cos(u)\sqrt{1-\cos^2(u)}i.    
\end{align*}

Let us yet prove the rest of the lemma. For all such points $x,y\in\B^2$ that $|y|=|x|$, the inequality $s_{\B^2}(x,y)\leq|x|$ holds because, if $x,y$ are collinear with the origin, then by Lemma \ref{lem_sCollinear}
\begin{align*}
s_{\B^2}(x,y)=\frac{|x|+|y|}{2-||x|-|y||}=|x|,   
\end{align*}
and otherwise this inequality can be directly seen from Theorem \ref{thm_sConjugate}. Therefore, if $t>|x|$, $s_{\B^n}(x,y)<t$ for all $x\in\B^2\setminus\{0\}$ and $y\in S^1(0,|x|)$.
\end{proof}

Lemmas \ref{lem_colliIntS} and \ref{lem_conjugS} can be used to prove that certain inclusion results are sharp, as done also in the following theorem. 

\begin{theorem}\label{thm_sEucInc}
For $x\in G=\B^n$ and $0<t<1$,
$B^n(x,r_0)\subseteq B_s(x,t)\subseteq B^n(x,r_1)$ if and only if
\begin{align*}
0<r_0\leq\frac{2t(1-|x|)}{1+t}
\quad\text{and}\quad
r_1\geq\min\left\{\frac{2t(1-|x|)}{1-t},\frac{2t(1+|x|)}{1+t}\right\}
\end{align*}
\end{theorem}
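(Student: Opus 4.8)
The plan is to reduce both inclusions to finding the extreme values of the Euclidean distance $|x-y|$ as $y$ ranges over the triangular ratio sphere $S_s(x,t)$. Writing $r_0^*=2t(1-|x|)/(1+t)$ and $r_1^*=\min\{2t(1-|x|)/(1-t),\,2t(1+|x|)/(1+t)\}$, the first observation is that these are precisely the distances $|x-y_0|$ and $|x-y_1|$ of the two collinear intersection points produced by Lemma \ref{lem_colliIntS}. Hence it suffices to prove that $r_0^*=\min\{|x-y|:y\in S_s(x,t)\}$ and $r_1^*=\max\{|x-y|:y\in S_s(x,t)\}$, and for this I would show that, for each fixed Euclidean radius $r>0$, the map $y\mapsto s_{\B^n}(x,y)$ restricted to the sphere $S^n(x,r)$ attains its maximum at the point $x+rx/|x|$ collinear with the origin on the far side, and its minimum at the point $x-rx/|x|$ collinear on the near side. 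The points $y_0,y_1$ then supply the sharpness witnesses for both directions of the ``if and only if''. By the rotational symmetry of the configuration about the line $L(0,x)$, the value $s_{\B^n}(x,y)$ on $S^n(x,r)$ depends only on $|y|$, which ranges over $[\,|\,|x|-r|\,,\,|x|+r\,]$, so the extrema can indeed be sought among the axial points.

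For the inner inclusion I would argue directly from Lemma \ref{lem_sCollinear}. For any $y$ with $|x-y|=r$ the collinear bound gives $s_{\B^n}(x,y)\le |x-y|/(2-|x+y|)$, and since $|x+y|\le|x|+|y|\le 2|x|+r$ this is at most $r/(2-2|x|-r)$, with equality at the far collinear point where Lemma \ref{lem_sCollinear} is sharp. As $r/(2-2|x|-r)$ is increasing in $r$ and equals $t$ exactly at $r=r_0^*$, every $y$ with $|x-y|<r_0^*$ satisfies $s_{\B^n}(x,y)<t$, which yields $B^n(x,r_0^*)\subseteq B_s(x,t)$; sharpness follows since $y_0\in S_s(x,t)$ lies at distance exactly $r_0^*$, so no larger Euclidean ball fits.

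The outer inclusion is the main obstacle, because Lemma \ref{lem_sCollinear} bounds $s$ only from above, whereas here I need a matching lower bound at the near collinear point. The key is that on the line $L(0,x)$ the triangular ratio metric coincides with the $j^*$-metric: comparing the equality case of Lemma \ref{lem_sCollinear} with \eqref{def_j} gives $s_{\B^n}(x,y)=j^*_{\B^n}(x,y)$ whenever $x,y$ and the origin are collinear, because then $|x+y|+|x-y|=2\max\{|x|,|y|\}$. Combined with the universal inequality $j^*_{\B^n}\le s_{\B^n}$ from Theorem \ref{thm_sjbounds}, it remains to check that $j^*_{\B^n}(x,y)=|x-y|/(|x-y|+2\min\{1-|x|,1-|y|\})$ is minimized over $S^n(x,r)$ at the near collinear point. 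This is elementary: for fixed $r$ the quantity $\min\{1-|x|,1-|y|\}$ is largest exactly when $|y|$ is smallest, and $|y|$ attains its minimum $|\,|x|-r\,|$ at $y_{\mathrm{near}}=x-rx/|x|$. Therefore for every $y$ with $|x-y|=r$ one gets $s_{\B^n}(x,y)\ge j^*_{\B^n}(x,y)\ge j^*_{\B^n}(x,y_{\mathrm{near}})=s_{\B^n}(x,y_{\mathrm{near}})$, so the near collinear point minimizes $s$ at each radius.

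Finally I would convert this pointwise extremal statement into the outer inclusion and verify the case split. The minimal value $s_{\B^n}(x,y_{\mathrm{near}})$ is increasing in $r$, equal to $r/(r+2(1-|x|))$ while $r\le 2|x|$ and to $r/(2+2|x|-r)$ once $r>2|x|$; solving each equation for the value $t$ reproduces the two branches of $r_1^*$, the first when $t\le|x|$ and the second when $t>|x|$. Since this minimal value exceeds $t$ for $r>r_1^*$, no point of $B_s(x,t)$ lies at Euclidean distance $\ge r_1^*$, giving $B_s(x,t)\subseteq B^n(x,r_1^*)$, while $y_1\in S_s(x,t)$ at distance exactly $r_1^*$ shows no smaller outer radius works. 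The only points needing extra care are the boundary case $r=2|x|$, where the two formulas agree (both giving $|x|$) so that the minimal value is continuous and monotone across the regimes, and the check that the minimizing $|y|$ indeed satisfies $|y|\le|x|$ or $|y|>|x|$ in the respective regimes, which is exactly what selects the correct branch of the minimum.
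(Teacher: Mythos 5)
Your proof is correct, and for the outer inclusion it takes a genuinely different route from the paper's. The paper proves both directions by domain monotonicity of $s$: for the inner inclusion it compares with the inscribed ball $B^n(x,1-|x|)$, where $s_{B^n(x,1-|x|)}(x,y)=|x-y|/(2-2|x|-|x-y|)$, and for the outer inclusion it compares with $B^n(x,1+|x|)\setminus\{x/|x|\}$, bounding $s_{\B^n}(x,y)$ from below by the maximum of the explicit formulas for the punctured space and the circumscribed ball; your two branches $r/(r+2-2|x|)$ for $r\leq 2|x|$ and $r/(2+2|x|-r)$ for $r>2|x|$ are exactly that maximum, so the quantitative content coincides. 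What you do differently is (a) derive the inner bound from the collinear estimate of Lemma \ref{lem_sCollinear} via $|x+y|\leq 2|x|+r$, which is equivalent to the inscribed-ball comparison, and (b) for the outer bound, observe that $s_{\B^n}=j^*_{\B^n}$ on lines through the origin (from the identity $|x+y|+|x-y|=2\max\{|x|,|y|\}$ for radially collinear points), combine this with $j^*_{\B^n}\leq s_{\B^n}$ from Theorem \ref{thm_sjbounds}, and minimize $j^*_{\B^n}$ over $S^{n-1}(x,r)$, which is immediate since $j^*_{\B^n}$ depends on $y$ only through $|x-y|$ and $|y|$. This buys a self-contained argument that avoids auxiliary comparison domains and makes the extremality of the near collinear point transparent; the paper's route is the one that generalizes to domains where explicit formulas for $s$ in comparison domains exist but no convenient companion metric like $j^*$ is available. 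Your sharpness step via the points $y_0,y_1$ of Lemma \ref{lem_colliIntS} is the same as the paper's. Two small caveats: your opening claim that the extrema of $s_{\B^n}(x,\cdot)$ on $S^{n-1}(x,r)$ ``can be sought among the axial points'' does not follow merely from the value depending only on $|y|$ (an interior value of $|y|\in[\,||x|-r|,\,|x|+r\,]$ could a priori be extremal), but your actual bounds hold for every $y$ on the sphere, so this remark is never load-bearing; and both your argument and the formula $y_{\mathrm{near}}=x-rx/|x|$ presuppose $x\neq0$, so the case $x=0$ should be dispatched separately by Lemma \ref{lem_sForx0}, where the two radii coincide at $2t/(1+t)$.
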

\begin{proof}
For all $y\in\B^n(x,r_0)$ with $r_0$ as above, it follows from the fact that $B^n(x,1-|x|)\subseteq \B^n$ that
\begin{align*}
s_{\B^n}(x,y)\leq s_{B^n(x,1-|x|)}(x,y)=\frac{|x-y|}{2-2|x|-|x-y|}<\frac{r_0}{2-2|x|-r_0}\leq s.
\end{align*}
Consequently, for all $y\in\B^n(x,r_0)$, the triangular ratio distance between $x$ and $y$ is less than $t$ so $B^n(x,r_0)\subseteq B_s(x,t)$. Note that the upper limit of $r_0$ is also sharp. Namely, if the radius $r_0$ could be any greater than $2t(1-|x|)/(1+t)$, the ball $\B^n(x,r_0)$ would contain the point $y_0\in S_s(x,t)$ of Lemma \ref{lem_colliIntS} and this point is trivially outside of the ball $B_s(x,t)$.

For all $y\in\B^n\setminus\overline{B}^n(x,r_1)$ where $r_1$ is as in the theorem, it follows from the fact that $\B^n\subseteq B^n(x,1+|x|)\setminus\{x/|x|\}$ that
\begin{align*}
s_{\B^n}(x,y)
&\geq s_{B^n(x,1+|x|)\setminus\{x/|x|\}}(x,y)
\geq\max\{s_{\R^n\setminus\{x/|x|\}}(x,y),s_{B^n(x,1+|x|)}(x,y)\}\\
&=\max\left\{\frac{|x-y|}{1-|x|+|x/|x|-y|},\frac{|x-y|}{2+2|x|-|x-y|}\right\}\\
&\geq\max\left\{\frac{|x-y|}{2-2|x|+|x-y|},\frac{|x-y|}{2+2|x|-|x-y|}\right\}\\
&>\max\left\{\frac{r_1}{2-2|x|+r_1},\frac{r_1}{2+2|x|-r_1}\right\}\geq s,    
\end{align*}
so $B_s(x,t)\cap(\B^n\setminus\overline{B}^n(x,r_1))=\varnothing$. However, since $B_s(x,t)\subseteq\B^n$, this proves that $B_s(x,t)\subseteq B^n(x,r_1)$. It follows from this inclusion that $|x-y_1|\leq r_1$ for the point $y_1\in S_s(x,t)$ of Lemma \ref{lem_colliIntS}, which shows that the lower limit of $r_1$ in this theorem is sharp.
\end{proof}

Note that it is not possible find such a Euclidean ball $B^n(x,r)$ for the triangular ratio metric ball $B_s(x,t)$ that $B_s(x,t)\subseteq B^n(x,r)\subsetneq\B^n$ if the conditions of the following lemma do not hold.

\begin{lemma}\label{lem_eucInB}
For $x\in G=\B^n$ and $0<t<1$, the smallest Euclidean ball centered at $x$ and containing the triangular ratio metric ball $B_s(x,t)$ is included in the unit ball if and only if either $t<1/3$ and $t\leq|x|$, or $|x|<1/3$ and $|x|<t<(1-|x|)/(1+3|x|)$.
\end{lemma}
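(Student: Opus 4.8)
The plan is to reduce the entire statement to Theorem~\ref{thm_sEucInc}, which already identifies the smallest radius for which a Euclidean ball centered at $x$ contains $B_s(x,t)$. First I would record that this smallest containing ball is $B^n(x,r_1)$ with
\[
r_1=\min\left\{\frac{2t(1-|x|)}{1-t},\frac{2t(1+|x|)}{1+t}\right\},
\]
and then observe that such a ball sits strictly inside $\B^n$ exactly when its point farthest from the origin stays inside the unit sphere, i.e. when $|x|+r_1<1$. This turns the whole lemma into the single inequality $|x|+r_1<1$, so that the real task is to translate it into explicit conditions on $t$ and $|x|$.

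The next step is to resolve the minimum. Here I would reuse the elementary equivalence already noted in the proof of Lemma~\ref{lem_colliIntS}, namely that $(1-|x|)/(1-t)\le(1+|x|)/(1+t)$ holds if and only if $t\le|x|$. This splits the analysis cleanly into two cases: when $t\le|x|$ we have $r_1=2t(1-|x|)/(1-t)$, and when $t>|x|$ we have $r_1=2t(1+|x|)/(1+t)$.

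In the first case the factor $1-|x|>0$ cancels and $|x|+r_1<1$ collapses to $2t/(1-t)<1$, that is $t<1/3$; this produces the branch ``$t<1/3$ and $t\le|x|$.'' In the second case I would clear the positive denominator $1+t$ and simplify $2t(1+|x|)<(1-|x|)(1+t)$ to $t(1+3|x|)<1-|x|$, i.e. $t<(1-|x|)/(1+3|x|)$, giving the constraint $|x|<t<(1-|x|)/(1+3|x|)$. The one point needing care is that this interval for $t$ must be nonempty: requiring $|x|<(1-|x|)/(1+3|x|)$ reduces to $(3|x|-1)(|x|+1)<0$, which, since $|x|+1>0$, is precisely $|x|<1/3$. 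Collecting the two branches then reproduces the stated equivalence.

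I do not expect a serious obstacle, since the content is essentially a two-line reduction followed by routine algebra once the minimum is resolved via the $t\le|x|$ dichotomy. The only genuinely delicate points are the boundary bookkeeping --- deciding that the relevant condition is the strict inequality $|x|+r_1<1$, so that the containing ball does not touch $S^{n-1}$, rather than $|x|+r_1\le1$ --- and the consistency check in the second case, where the emptiness-of-interval computation is exactly what forces the extra requirement $|x|<1/3$.
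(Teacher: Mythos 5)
Your proposal is correct and takes essentially the same route as the paper's own proof: both reduce the lemma to the single inequality $|x|+r_1<1$ with the minimal radius $r_1$ from Theorem \ref{thm_sEucInc}, resolve the minimum via the dichotomy $t\le|x|$ versus $t>|x|$, and obtain the constraint $|x|<1/3$ from the non-emptiness condition $3|x|^2+2|x|-1<0$ (your factorization $(3|x|-1)(|x|+1)<0$ is just the paper's root computation in another form). No gaps.
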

\begin{proof}
Let $B^n(x,r)$ be the smallest Euclidean ball centered at $x$ and containing $B_s(x,t)$. It is in the unit disk if and only if $|x|+r<1$. If $t\leq|x|$, $r=2t(1-|x|)/(1-t)$ here by Theorem \ref{thm_sEucInc}, and $|x|+2t(1-|x|)/(1-t)<1$ is equivalent to $t<1/3$. If $t>|x|$, $r=2t(1+|x|)/(1+t)$ by Theorem \ref{thm_sEucInc}, and $|x|+2t(1+|x|)/(1+t)<1$ if and only if $t<(1-|x|)/(1+3|x|)$. The conditions $t>|x|$ and $t<(1-|x|)/(1+3|x|)$ can both hold only if $|x|<(1-|x|)/(1+3|x|)$, which is equivalent to $3|x|^2+2|x|-1<0$. The equation $3|x|^2+2|x|-1=0$ has roots $|x|=-1$ and $|x|=1/3$ and, since $|x|$ is non-negative, $3|x|^2+2|x|-1<0$ holds with $|x|<1/3$.
\end{proof}

\begin{figure}[ht]
    \centering
    \begin{tikzpicture}
    \draw (0,0) circle (3cm);
    \draw (0,0) circle (0.08cm);
    \draw (0,1.5) circle (0.08cm);
    \draw (-1.763,2.427) circle (0.08cm);
    \draw (-1.191,-0.912) circle (0.08cm);
    \draw (0,0) -- (0,1.5);
    \draw (-3.091,3.125) -- (3.547,-0.365);
    \draw (-2.017,3.905) -- (-0.698,-3.782);
    \draw (-1.5*1.763,1.5*2.427) -- (1.5*1.763,-1.5*2.427);
    \draw (0,0.5) arc (90:127:0.5);
    \draw (-1.67,1.8) arc (275:310:0.5);
    \draw (-1.3,1.8) arc (310:335:0.8);
    \node[scale=1.3] at (0.3,0.1) {$0$};
    \node[scale=1.3] at (0,1.8) {$x$};
    \node[scale=1.3] at (-1.6,2.8) {$z$};
    \node[scale=1.3] at (-0.9,-0.9) {$y$};
    \node[scale=1.3] at (-0.2,0.7) {$u$};
    \node[scale=1.3] at (-1,1.75) {$v$};
    \node[scale=1.3] at (-1.4,1.5) {$v$};
    \end{tikzpicture}
    \caption{For $x=0.5i$, $t=0.5$, $u=\pi/5$, and $z=xe^{ui}/|x|$, a point $y$ found with Theorem \ref{thm_zus} so that $t=|x-y|/(|x-z|+|z-y|)$ and the line $L(0,z)$ bisects the angle between the lines $L(x,z)$ and $L(y,z)$.}
    \label{fig2}
\end{figure}
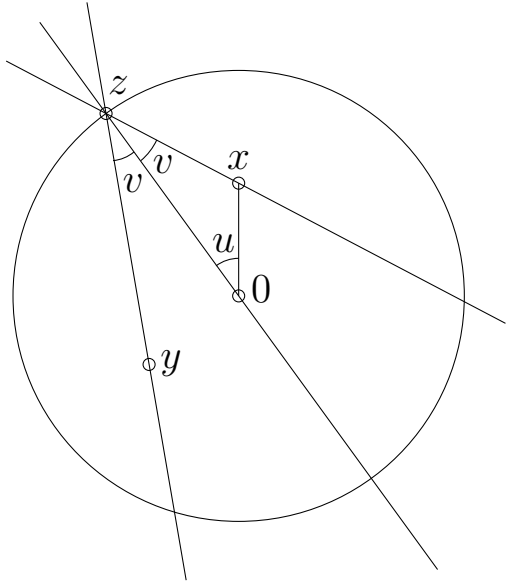

The following theorem is used to create an algorithm for drawing triangular ratio circles.

\begin{theorem}\label{thm_zus}
For $x\in\B^2\setminus\{0\}$, $0<t<1$, and $u\in[0,\pi]$, fix $z=xe^{ui}/|x|$. If
\begin{align*}
t<|x|\quad\text{and}\quad\frac{t^2-\sqrt{(1-t^2)(|x|^2-t^2)}}{|x|}<\cos(u)<\frac{t^2+\sqrt{(1-t^2)(|x|^2-t^2)}}{|x|},  
\end{align*}
there is no such point $y\in\B^2$ for which the equality
\begin{align*}
t=\frac{|x-y|}{|x-z|+|z-y|}    
\end{align*}
would hold and $L(0,z)$ would bisect the angle between $L(x,z)$ and $L(y,z)$. Otherwise, a point $y\in\B^2$ fulfills both of these conditions if and only if
\begin{align*}
y&=z(1-he^{-vi})\quad\text{with}\quad
v={\rm acos}\left(\frac{1-|x|\cos(u)}{|x-z|}\right)\in[0,\pi/2]\quad\text{and}\\
h&=\frac{|x-z|(t^2+\cos(2v)\pm2\cos(v)\sqrt{t^2-\sin^2(v)})}{1-t^2}\in(0,2\cos(v)).
\end{align*}
\end{theorem}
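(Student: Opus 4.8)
The plan is to read the theorem as a parametrization of the sphere $S_s(x,t)$ by the direction $u$ of a boundary point $z\in S^1$, and to reduce both stated requirements to a single quadratic in the one unknown $h=|y-z|$.

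I would begin with the geometry forced by the bisection condition. Since $|z|=1$, the inward unit vector at $z$ is $-z$. Applying the law of cosines to the triangle $oxz$, whose sides are $|oz|=1$, $|ox|=|x|$, $|xz|=\sqrt{1+|x|^2-2|x|\cos u}$ with apex angle $u$ at $o$, gives $\cos v=(1-|x|\cos u)/|x-z|$, the value of $v$ claimed in the statement, and $1-|x|\cos u>0$ places $v\in[0,\pi/2]$; the same triangle yields $\sin v=|x|\sin u/|x-z|$. Because $u\in[0,\pi]$, the vector $x-z$ is $-z$ rotated by $+v$, so requiring $L(0,z)$ to bisect $\measuredangle xzy$ forces $y-z$ to lie along $-z$ rotated by $-v$; with $h=|y-z|\ge 0$ this is exactly $y=z(1-he^{-vi})$, and symmetrically $x=z(1-|x-z|e^{vi})$.

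Next I would impose the ratio equation. Writing $a=|x-z|$, expanding $x-y=z(he^{-vi}-ae^{vi})$ gives $|x-y|^2=a^2+h^2-2ah\cos 2v$, so $t(a+h)=|x-y|$ (both sides nonnegative, hence equivalent to its square) becomes $(t^2-1)h^2+2a(t^2+\cos 2v)h+a^2(t^2-1)=0$. The quadratic formula, together with the identities $\cos 2v-1=-2\sin^2 v$ and $\cos 2v+1=2\cos^2 v$ that collapse the discriminant to $4\cos^2 v(t^2-\sin^2 v)$, yields precisely the stated $h$. Real roots exist iff $\sin^2 v\le t^2$; substituting $\sin v=|x|\sin u/|x-z|$ and clearing denominators turns this into the upward-opening quadratic $|x|^2\cos^2 u-2t^2|x|\cos u+t^2+t^2|x|^2-|x|^2\ge 0$, which is exactly the one solved inside the proof of Lemma~\ref{lem_conjugS}, with roots $(t^2\pm\sqrt{(1-t^2)(|x|^2-t^2)})/|x|$. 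That quadratic is negative only strictly between its roots and has real roots only when $t\le|x|$, which reproduces the ``no solution'' region ($t<|x|$ and $\cos u$ strictly between the two roots) and gives a real $h$ in every other case.

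It remains to convert membership $y\in\B^2$ into the range constraint and to finish the equivalence. From $y=z(1-he^{-vi})$ one computes $|y|^2=1-2h\cos v+h^2$, so $|y|<1$ iff $0<h<2\cos v$, the stated interval. The two directions of the ``if and only if'' are then immediate: the bisection condition is built into the form $y=z(1-he^{-vi})$, the ratio equation is equivalent to $h$ solving the quadratic, and $y\in\B^2$ is equivalent to $h\in(0,2\cos v)$. I expect the filtering step to be the main obstacle: writing $f(h)$ for the left-hand side of the quadratic, one has $f(0)=a^2(t^2-1)<0$ with positive product of roots, so both roots are positive, but whether one or both fall below $2\cos v$ depends on the sign of $f(2\cos v)=(t^2-1)(2\cos v+a)^2+8a\cos^3 v$, which is negative for small $t$ (both roots admissible) and positive for $t$ near $1$ (only the smaller root admissible). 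The care required is to confirm that the smaller root always satisfies $h<2\cos v$ whenever the discriminant is nonnegative, so that the ``otherwise'' case genuinely produces a point $y$, while the larger root is retained exactly when $f(2\cos v)\le 0$; this is a short but delicate computation rather than a conceptual difficulty.
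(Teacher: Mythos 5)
Your proposal is correct and follows essentially the same route as the paper's proof: the same law-of-cosines derivation of $\cos v=(1-|x|\cos u)/|x-z|$, the same parametrization $y=z(1-he^{-vi})$ with $y\in\B^2$ equivalent to $0<h<2\cos v$, the same quadratic in $h$ whose discriminant collapses to $4\cos^2(v)(t^2-\sin^2(v))$, and the same translation of the real-root condition $t^2\geq\sin^2 v$ into the quadratic in $\cos u$ with roots $(t^2\pm\sqrt{(1-t^2)(|x|^2-t^2)})/|x|$. The ``filtering'' step you flag as the main remaining obstacle is not actually required: the theorem (consistently with the paper's proof and with step 4 of its plotting algorithm) uses $h\in(0,2\cos v)$ as part of the characterization of admissible $y$ rather than asserting that such an $h$ always exists, so your argument already establishes the full equivalence.
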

\begin{proof}
Fix $t$, $x$, $u$, and $z$ as in the lemma. Note that $u=\measuredangle XOZ$. Now,
\begin{align*}
|x-z|=||x|-e^{ui}|=\sqrt{1+|x|^2-2|x|\cos(u)}.    
\end{align*}
Let $v=\measuredangle XZO\in[0,\pi/2]$. We can solve that
\begin{align*}
\sqrt{1+|x-z|^2-2|x-z|\cos(v)}=|x|
\quad\Leftrightarrow\quad
\cos(v)=\frac{1+|x-z|^2-|x|^2}{2|x-z|}=\frac{1-|x|\cos(u)}{|x-z|}.
\end{align*}

For $y\in\B^2$, $L(0,z)$ bisects the angle between $L(x,z)$ and $L(y,z)$, if and only if $\measuredangle OZY=v$ but $y\notin L(x,z)$ unless $v=0$. See Figure \ref{fig2}. Consequently, we can write 
\begin{align*}
h&=z+h(-z)e^{-vi}=z(1-he^{-vi})
\end{align*}
for some $h\in\R$. The condition $y\in\B^2$ is equivalent to
\begin{align*}
&|y|=|1-he^{vi}|=\sqrt{1+h^2-2h\cos(v)}<1
\quad\Leftrightarrow\quad
h(h-2\cos(v))<0\\
&\Leftrightarrow\quad
0<h<2\cos(v).
\end{align*}

Suppose then that $t=|x-y|/(|x-z|+|z-y|)$. By using trigonometric identities $\cos(2v)=2\cos^2(v)-1$ and $\cos^2(v)+\sin^2(v)=1$, we will have
\begin{align*}
&t
=\frac{|x-y|}{|x-z|+|z-y|}
=\frac{\sqrt{h^2+|x-z|^2-2h|x-z|\cos(2v)}}{|x-z|+h}\\
&\Leftrightarrow\quad
(1-t^2)h^2-2|x-z|(t^2+\cos(2v))h+(1-t^2)|x-z|^2=0\\
&\Leftrightarrow\quad
h=\frac{2|x-z|(t^2+\cos(2v))\pm\sqrt{4|x-z|^2(t^2+\cos(2v))^2-4|x-z|^2(1-t^2)^2}}{2(1-t^2)}\\
&\quad\quad\quad\,=\frac{|x-z|(t^2+\cos(2v)\pm\sqrt{\cos^2(2v)+2t^2(\cos(2v)+1)-1})}{1-t^2}\\
&\quad\quad\quad\,=\frac{|x-z|(t^2+\cos(2v)\pm2\cos(v)\sqrt{t^2-\sin^2(v)})}{1-t^2}
\end{align*}
The roots above are defined if and only if 
\begin{align*}
&t^2\geq\sin^2(v)=1-\cos^2(v)=1-\frac{(1-|x|\cos(u))^2}{|x-z|^2}
=\frac{|x|^2(1-\cos^2(u))}{1+|x|^2-2|x|\cos(u)}\\
&\Leftrightarrow\quad
|x|^2\cos^2(u)-2t^2|x|\cos(u)+t^2+t^2|x|^2-|x|^2\geq0.
\end{align*}
If $t\geq|x|$, the inequality above holds for all $\cos(u)$. If $t<|x|$, we can solve that
\begin{align*}
&|x|^2\cos^2(u)-2t^2|x|\cos(u)+t^2+t^2|x|^2-|x|^2=0\\
&\Leftrightarrow\quad
\cos(u)=\frac{t^2+\sqrt{(1-t^2)(|x|^2-t^2)}}{|x|}
\end{align*}
and we need to choose
\begin{align*}
\cos(u)\leq\frac{t^2-\sqrt{(1-t^2)(|x|^2-t^2)}}{|x|}\quad\text{or}\quad\cos(u)\geq\frac{t^2+\sqrt{(1-t^2)(|x|^2-t^2)}}{|x|}    
\end{align*}
for $h$ to be well-defined. The proof is now finished.
\end{proof}

Note that even if $x,y\in\B^2$ and $z\in S^1$ so that $L(0,z)$ bisects the angle between the lines $L(x,z)$ and $L(y,z)$, this does not necessarily mean that 
\begin{align}\label{s_xyz}
s_{\B^2}(x,y)=\frac{|x-y|}{|x-z|+|z-y|}.    
\end{align}
However, if this equality holds for some $x,y\in\B^2$ and $z\in S^1$, then $L(0,z)$ must bisect the angle between the lines $L(x,z)$ and $L(y,z)$ by Theorem \ref{thm_findzinB}. In other words, when given $x\in\B^2$ and $u\in[0,\pi]$, Theorem \ref{thm_zus} can be only used to find potential candidates for such points $y\in\B^2$ that the equality \eqref{s_xyz} holds with the point $z=e^{ui}$. This is also the idea behind the following algorithm, which finds efficiently such points $y$ that belong to the circle $S_s(x,t)$. This algorithm has been used to create most of the images of this article, including Figure \ref{fig3} that shows how the radius $t$ affects the shape of the triangular ratio metric circles.

\begin{figure}[ht]
    \centering
    \begin{tikzpicture}
    \node[inner sep=0pt] at (0,0)
    {\includegraphics[width=7cm,trim={1cm 2.5cm 1cm 3cm},clip]{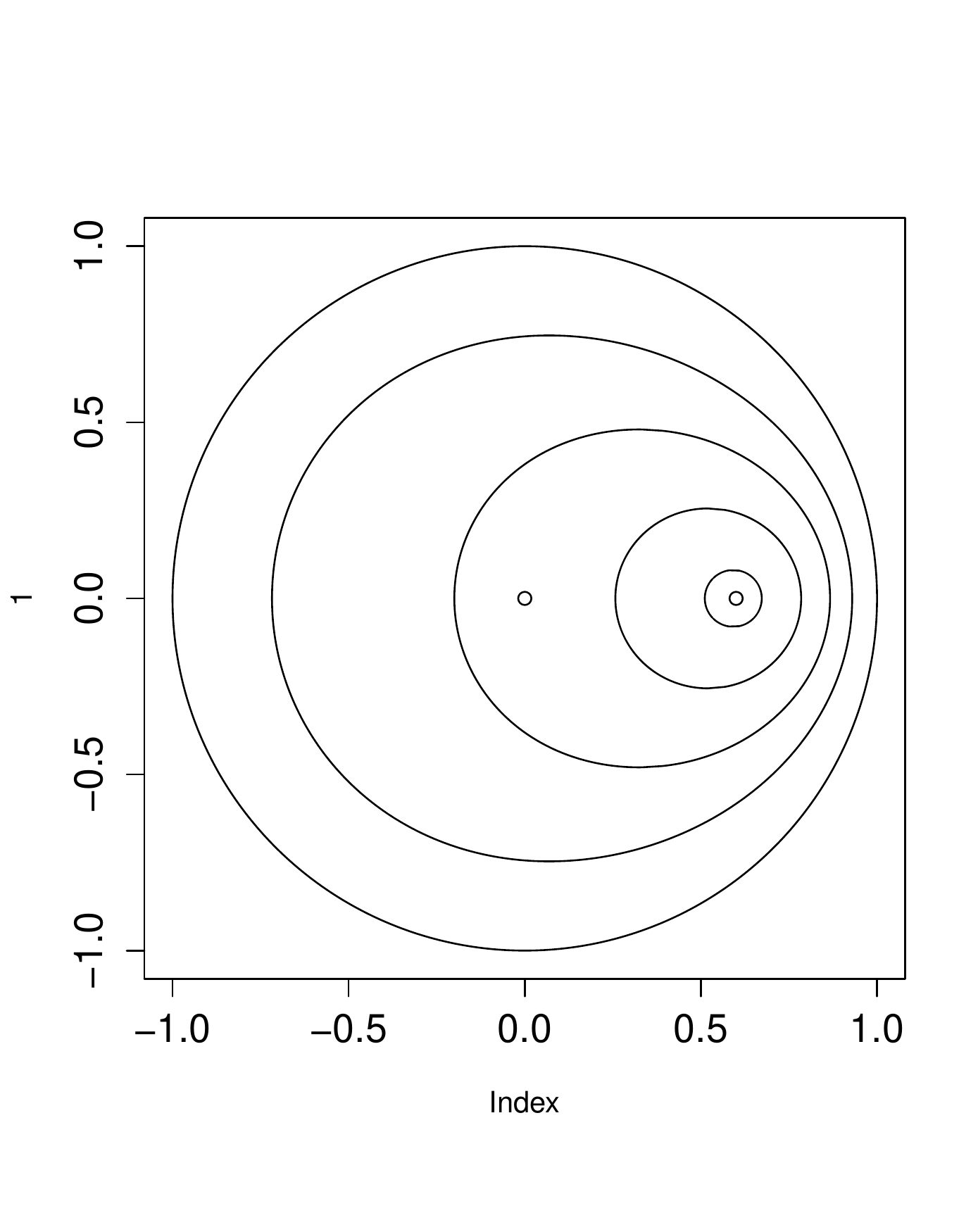}};
    \node[scale=1] at (0.3,0.6) {$0$};
    \node[scale=1] at (2,0.75) {$0.1$};
    \node[scale=1] at (1.3,1.2) {$0.3$};
    \node[scale=1] at (0.8,1.9) {$0.5$};
    \node[scale=1] at (0.2,2.7) {$0.7$};
    \end{tikzpicture}
    \caption{The triangular ratio metric circles $S_s(x,t)$ for the radii $t=0.1,0.3,0.5,0.7$, when the center $x$ is $0.6$ and the domain $G$ is the unit disk $\B^2$.}
    \label{fig3}
\end{figure}

\begin{nonsec}{\bf Algorithm for drawing triangular ratio circles in the unit disk}

\noindent The following instructions can be used to write an algorithm that plots the triangular ratio circle $S_s(x,t)$ in the domain $G=\B^2$ when the user gives the center $x\in\B^2$ and the radius $0<t<1$ of as an input.

1. If $x=0$, plot the Euclidean circle $S^1(x,2t/(1+t))$ and stop after that.

2. Otherwise, initialize a vector $Y$, fix for instance $N=10,000$ and $\epsilon=10^{-5}$, and define the functions
\begin{align*}
z(u)&=\frac{x}{|x|}e^{ui},\quad
v(u)={\rm acos}\left(\frac{1-|x|\cos(u)}{|x-z(u)|}\right)\\
h_0(u)&=\frac{|x-z(u)|(t^2+\cos(2v(u))-2\cos(v(u))\sqrt{t^2-\sin^2(v(u))})}{1-t^2},\quad\text{and}\\
h_1(u)&=\frac{|x-z(u)|(t^2+\cos(2v(u))+2\cos(v(u))\sqrt{t^2-\sin^2(v(u))})}{1-t^2}.
\end{align*}

3. If $t\geq|x|$, create a vector $U$ containing $N$ values from the interval $[0,\pi]$ and, if $t<|x|$ instead, choose the $N$ values of the vector $U$ from the set
\begin{align*}
\left[0,{\rm acos}\left(\frac{t^2+\sqrt{(1-t^2)(|x|^2-t^2)}}{|x|}\right)\right]\cup\left[{\rm acos}\left(\frac{t^2-\sqrt{(1-t^2)(|x|^2-t^2)}}{|x|}\right),\pi\right].    
\end{align*}

4. For each value $u\in U$, compute $h_0(u)$ and $h_1(u)$, add the point $z(u)(1-h_0(u)e^{-v(u)i})$ to the vector $Y$ if $0<h_0(u)<2\cos(v(u))$ and, similarly, add the point $z(u)(1-h_1(u)e^{-v(u)i})$ to the vector $Y$ if $0<h_1(u)<2\cos(v(u))$.

5. Then for each value $y\in Y$, compute the triangular ratio distance $s_{\B^2}(x,y)$ by using for instance the algorithm \cite[Algorithm 2.5, p. 686]{chkv} and remove this value $y$ from $Y$ if $|s_{\B^2}(x,y)-t|>\epsilon$.

6. For each remaining element $y\in Y$ to the vector $Y$, add the value $x\overline{y}/\overline{x}$ to the vector $Y$ because this is a reflection of $y$ over the line $L(0,x)$ by \cite[B.11, p. 460]{hkv} and the triangular ratio metric is invariant under reflection.

7. Organize all the values $y\in Y$ so that the complex argument of the difference $y-x$ increases monotonically on $[0,2\pi)$ as the one lists the elements of the vector $Y$ and plot the triangular ratio circle $S_s(x,t)$ by connecting the organized points of $Y$. 
\end{nonsec}

For $x\in G=\B^3$ and $0<t<1$, the three-dimensional triangular ratio sphere $S_s(x,t)$ can be drawn by using the algorithm above to plot the two-dimensional circle $S_s(x,t)$ in any two-dimensional plane containing both $x$ and the origin, and then plot the three-dimensional sphere by rotating this circle around either the line $L(0,x)$ or any line that contains the origin if $x=0$. 

\section{Triangular ratio metric and $j^*$-metric balls}

Unlike for triangular ratio metric balls, an explicit expression can be found for any $j^*$-metric sphere $S_{j^*}(x,k)$ in terms of Euclidean geometry in the domain $G=\B^n$, and the results of Theorem \ref{thm_yCirc} and Corollary \ref{cor_jBEucS} can be used to create an algorithm drawing $j^*$-metric balls in the unit disk or ball.

\begin{theorem}\label{thm_yCirc}
For $x\in\B^n\setminus\{0\}$ and $0<k<1$, the sphere $S_\Upsilon(x,k)$ defined with the function
\begin{align*}
\Upsilon_{\B^n}(x,y)=\frac{|x-y|}{|x-y|+2-2|y|}
\end{align*}
in the domain $\B^n\setminus\{0\}$ is
\begin{align*}
&\left\{y\in\B^n\setminus\{0\}\text{ }|\text{ }|y|=\frac{1-|x|^2}{2(1-|x|\cos(u))}\quad\text{for}\quad u=\measuredangle XOY\right\}
\quad\text{if}\quad k=\frac{1}{3},\\
&\left\{y\in\B^n\setminus\{0\}\text{ }|\text{ }|y|=l_0(u)\quad\text{for}\quad u=\measuredangle XOY\right\}
\quad\text{if}\quad k<\frac{1}{3}\quad\text{and}\quad |x|<\frac{2k}{1-k},\\
&\left\{y\in\B^n\setminus\{0\}\text{ }|\text{ }|y|\in\{l_0(u),l_1(u)\},\,\cos(u)\geq c_1\quad\text{for}\quad u=\measuredangle XOY\right\}
\quad\text{if}\quad k<\frac{1}{3}\\ &\text{and}\quad|x|>\frac{2k}{1-k},\quad\text{or}\\
&\left\{y\in\B^n\setminus\{0\}\text{ }|\text{ }|y|=l_0(u),\,\cos(u)\leq c_0\quad\text{for}\quad u=\measuredangle XOY\right\}
\quad\text{if}\quad k>\frac{1}{3},
\end{align*}
where
\begin{align*}
c_0&=\frac{4k^2-\sqrt{(3k^2+2k-1)(4k^2-|x|^2(1-k)^2)}}{|x|(1-k)^2},\\
c_1&=\frac{4k^2+\sqrt{(3k^2+2k-1)(4k^2-|x|^2(1-k)^2)}}{|x|(1-k)^2},\\
l_0(u)&=\frac{4k^2-(1-k)^2|x|\cos(u)-(1-k)\sqrt{4k^2(1+|x|^2-2|x|\cos(u))-(1-k)^2|x|^2\sin^2(u)}}{3k^2+2k-1},\\
l_1(u)&=\frac{4k^2-(1-k)^2|x|\cos(u)+(1-k)\sqrt{4k^2(1+|x|^2-2|x|\cos(u))-(1-k)^2|x|^2\sin^2(u)}}{3k^2+2k-1}.
\end{align*}
\end{theorem}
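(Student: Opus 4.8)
The plan is to reduce the equation $\Upsilon_{\B^n}(x,y)=k$ to a single scalar equation in $|y|$ and the angle $u=\measuredangle XOY$, and then to locate its roots. Clearing the denominator in $|x-y|/(|x-y|+2-2|y|)=k$ gives at once
\begin{align*}
|x-y|=\frac{2k(1-|y|)}{1-k},
\end{align*}
so membership of $y$ in $S_\Upsilon(x,k)$ depends on $y$ only through $\rho:=|y|$ and $|x-y|$. Inserting the law of cosines $|x-y|^2=|x|^2+\rho^2-2|x|\rho\cos u$ and squaring the displayed identity, I would collect powers of $\rho$ to obtain the master equation
\begin{align*}
q(\rho):=(3k^2+2k-1)\rho^2+2\big((1-k)^2|x|\cos u-4k^2\big)\rho+\big(4k^2-(1-k)^2|x|^2\big)=0.
\end{align*}
Each point of $S_\Upsilon(x,k)$ corresponds to a root $\rho\in(0,1)$ of $q$ for a suitable $u\in[0,\pi]$, and conversely, so the whole problem becomes one of locating the roots of $q$.

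The leading coefficient $3k^2+2k-1=(3k-1)(k+1)$ vanishes exactly at $k=\tfrac13$, which is why that radius is singled out. For $k=\tfrac13$ the equation $q(\rho)=0$ is linear, and solving it gives $\rho=(1-|x|^2)/(2(1-|x|\cos u))$; using $2|x|\cos u\le 2|x|<1+|x|^2$ one checks this lies in $(0,1)$ for every $u$ with no restriction, matching the stated formula. For $k\neq\tfrac13$ the quadratic formula yields two candidate radii, and I would simplify the discriminant: after factoring out $(1-k)^2$ it becomes
\begin{align*}
(1-k)^2\big(4k^2(1+|x|^2-2|x|\cos u)-(1-k)^2|x|^2\sin^2 u\big),
\end{align*}
and taking the square root of the bracket reproduces precisely $l_0(u)$ and $l_1(u)$, with the minus and plus signs respectively.

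What remains is to decide, in each regime, which roots are admissible, i.e.\ real and in $(0,1)$. Reality forces the bracket above to be nonnegative; regarded as an upward quadratic in $\cos u$, its discriminant factors as $4|x|^2(3k^2+2k-1)(4k^2-(1-k)^2|x|^2)$, and its roots in $\cos u$ are exactly $c_0$ and $c_1$. Hence the admissible values of $u$ are controlled by the signs of $3k^2+2k-1$ and of $4k^2-(1-k)^2|x|^2$, the latter being positive precisely when $|x|<2k/(1-k)$; this is the origin of the three-way split for $k\neq\tfrac13$. To place the roots inside $(0,1)$ I would evaluate $q$ at the endpoints, finding $q(1)=-(1-k)^2(1+|x|^2-2|x|\cos u)<0$ always, while the sign of $q(0)=4k^2-(1-k)^2|x|^2$ is again fixed by $|x|\lessgtr 2k/(1-k)$, and would combine these with the sign of the leading coefficient (upward for $k>\tfrac13$, downward for $k<\tfrac13$) and the location of the vertex $\rho^\ast=(4k^2-(1-k)^2|x|\cos u)/(3k^2+2k-1)$.

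The main obstacle is this last localization, and the delicate case is $k<\tfrac13$ with $|x|>2k/(1-k)$, where $q$ opens downward and satisfies $q(0)<0$ and $q(1)<0$: here both roots can lie in $(0,1)$, and I must show this happens exactly when $\cos u\ge c_1$, while for $\cos u\le c_0$ both roots are negative and are discarded. The clean route is through the vertex: a short computation gives $\rho^\ast\in(0,1)$ if and only if $\cos u>4k^2/((1-k)^2|x|)$, and since $c_1$ exceeds this threshold the vertex enters $(0,1)$ precisely on $\cos u\ge c_1$, forcing both roots there. In the remaining regimes the analogous count gives the single admissible root $l_0$, the constraint $\cos u\le c_0$ turning out to be vacuous when $k>\tfrac13$ (there $c_0>1$, which follows from $D(1)=4k^2(1-|x|)^2>0$ together with the vertex of the $\cos u$-quadratic exceeding $1$). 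Finally I would confirm that the roots discarded by squaring, namely those with $\rho>1$ that violate $|x-y|=2k(1-|y|)/(1-k)\ge0$, together with the excluded centre $y=0$, never intersect the stated solution sets.
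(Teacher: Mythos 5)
Your proposal is correct, and its first half coincides with the paper's proof: the same reduction $(1-k)|x-y|=2k(1-|y|)$, the same law-of-cosines substitution leading to the quadratic $q(\rho)=(3k^2+2k-1)\rho^2+2\bigl((1-k)^2|x|\cos u-4k^2\bigr)\rho+4k^2-(1-k)^2|x|^2$, the same factorization of its discriminant as $(1-k)^2\bigl(4k^2(1+|x|^2-2|x|\cos u)-(1-k)^2|x|^2\sin^2 u\bigr)$, and the same quadratic in $\cos u$ with discriminant $4|x|^2(3k^2+2k-1)\bigl(4k^2-(1-k)^2|x|^2\bigr)$ producing $c_0,c_1$. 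Where you genuinely diverge is the admissibility analysis. The paper decides $0<l_i(u)<1$ by manipulating the explicit root formulas case by case (isolating the square root, checking when the other side is negative, then squaring), whereas you run a sign chart on $q$ itself: $q(1)=-(1-k)^2(1+|x|^2-2|x|\cos u)<0$ always, $q(0)=4k^2-(1-k)^2|x|^2$ with sign governed by $|x|\lessgtr 2k/(1-k)$, the leading coefficient's sign, and the vertex $\rho^\ast$, which satisfies $\rho^\ast<1$ automatically (it reduces to $|x|\cos u<1$) and $\rho^\ast>0$ iff $\cos u>4k^2/((1-k)^2|x|)$, the midpoint of $c_0$ and $c_1$. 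This buys several things. First, the count of roots in $(0,1)$ comes from intermediate-value reasoning with no squaring-direction bookkeeping, which is exactly where the paper's write-up has slips (its displayed quadratic has constant term $4k^2-(1-k)^2$ instead of $4k^2-(1-k)^2|x|^2$, and in the $k>1/3$ case its ``$|x|>2k/(1-k)$'' should read ``$<$''); your version of the master equation is the correct one. Second, it identifies mechanically which of $l_0,l_1$ is the admissible root in the single-root regimes — though in a final write-up you should state explicitly that the ordering of $l_0,l_1$ flips with the sign of the denominator $3k^2+2k-1$, so that for $k<\tfrac{1}{3}$ the root in $(0,1)$ is the \emph{larger} one, which is still $l_0$. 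Third, your observation that for $k>\tfrac{1}{3}$ the constraint $\cos u\le c_0$ is vacuous is a genuine sharpening absent from the paper: it follows from the identity $\bigl(4k^2-(1-k)^2|x|\bigr)^2-(3k^2+2k-1)\bigl(4k^2-(1-k)^2|x|^2\bigr)=4k^2(1-k)^2(1-|x|)^2>0$, which forces $c_0>1$; this is consistent with the theorem's statement (the constraint simply never binds), so there is no contradiction. Finally, your closing remark that restricting to $\rho\in(0,1)$ makes the squaring reversible (both sides of $(1-k)|x-y|=2k(1-|y|)$ are then nonnegative) correctly disposes of the extraneous roots, a point the paper handles only implicitly.
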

\begin{proof}
For $y\in\B^n\setminus\{0\}$, let $u=\measuredangle XOY$. By the cosine formula,
\begin{align*}
|x-y|=\sqrt{|x|^2+|y|^2-2|x||y|\cos(u)}.
\end{align*}
Consequently,
\begin{align*}
&\Upsilon_{\B^n}(x,y)=\frac{\sqrt{|x|^2+|y|^2-2|x||y|\cos(u)}}{\sqrt{|x|^2+|y|^2-2|x||y|\cos(u)}+2(1-|y|)}=k\\ 
&\Leftrightarrow\quad
(1-k)\sqrt{|x|^2+|y|^2-2|x||y|\cos(u)}=2k(1-|y|)\\
&\Leftrightarrow\quad
(3k^2+2k-1)|y|^2-2(4k^2-(1-k)2|x|\cos(u))|y|+4k^2-(1-k)^2=0.
\end{align*}
Note that, for $k>0$, $3k^2+2k-1>0$ if and only if $k>1/3$. If $k=1/3$, we have $|y|=(1-|x|^2)/(2(1-|x|\cos(u)))$. If $k\neq1/3$,
\begin{align}\label{rootForRInJ}
|y|=\frac{4k^2-(1-k)^2|x|\cos(u)\pm(1-k)\sqrt{4k^2(1+|x|^2-2|x|\cos(u))-(1-k)^2|x|^2\sin^2(u)}}{3k^2+2k-1}.    
\end{align}
The root above is defined if and only if
\begin{align*}
&4k^2(1+|x|^2-2|x|\cos(u))-(1-k)^2|x|^2\sin^2(u)\geq0\\  
&4k^2(1+|x|^2-2|x|\cos(u))\geq(1-k)^2|x|^2(1-\cos^2(u))\\
&(1-k)^2|x|^2\cos^2(u)-8k^2|x|\cos(u)+4k^2(1+|x|^2)-(1-k)^2|x|^2\geq0.
\end{align*}
The discriminant in the inequality above is
\begin{align*}
&(-8k^2|x|\cos(u))^2-4(1-k)^2|x|^2(4k^2(1+|x|^2)-(1-k)^2|x|^2)\\
&=4|x|^2(3k^2+2k-1)(4k^2-|x|^2(1-k)^2).
\end{align*}
Note that if $k>1/3$, then $(4k^2-|x|^2(1-k)^2)>4(1-|x|^2)/9>0$. It follows that the root \eqref{rootForRInJ} is defined if either
\begin{equation}\label{condForL}
\begin{aligned}
&k<\frac{1}{3}\quad\text{and}\quad|x|<\frac{2k}{1-k},\quad\text{or}\quad\\
&\cos(u)\leq\frac{4k^2-\sqrt{(3k^2+2k-1)(4k^2-|x|^2(1-k)^2)}}{|x|(1-k)^2}\quad\text{or}\\
&\cos(u)\geq\frac{4k^2+\sqrt{(3k^2+2k-1)(4k^2-|x|^2(1-k)^2)}}{|x|(1-k)^2}.
\end{aligned}
\end{equation}

Suppose that the condition \eqref{condForL} holds and the root \eqref{rootForRInJ} is defined. Let us first consider the case where the sign $\pm$ of this root is a plus. Clearly, $y\in\B^n\setminus\{0\}$ if and only if $0<|y|<1$. If $k>1/3$,
\begin{align*}
&\frac{4k^2-(1-k)^2|x|\cos(u)+(1-k)\sqrt{4k^2(1+|x|^2-2|x|\cos(u))-(1-k)^2|x|^2\sin^2(u)}}{3k^2+2k-1}<1\\
&\Leftrightarrow\quad
(1-k)\sqrt{4k^2(1+|x|^2-2|x|\cos(u))-(1-k)^2|x|^2\sin^2(u)}<-(1-k)^2(1-|x|\cos(u)),   
\end{align*}
which does not hold. If $k<1/3$ instead,
\begin{align*}
&\frac{4k^2-(1-k)^2|x|\cos(u)+(1-k)\sqrt{4k^2(1+|x|^2-2|x|\cos(u))-(1-k)^2|x|^2\sin^2(u)}}{3k^2+2k-1}>0\\
&\Leftrightarrow\quad
(1-k)\sqrt{4k^2(1+|x|^2-2|x|\cos(u))-(1-k)^2|x|^2\sin^2(u)}<-4k^2+(1-k)^2|x|\cos(u),
\end{align*}
which can only if $\cos(u)>4k^2/(|x|(1-k)^2)$. If this inequality holds and $k<1/3$,
\begin{align*}
&(1-k)\sqrt{4k^2(1+|x|^2-2|x|\cos(u))-(1-k)^2|x|^2\sin^2(u)}<-4k^2+(1-k)^2|x|\cos(u)\\
&\Leftrightarrow\quad-(3k^2+2k-1)(4k^2-|x|^2(1-k)^2)<0\\
&\Leftrightarrow\quad4k^2-|x|^2(1-k)^2<0
\quad\Leftrightarrow\quad|x|>2k/(1-k).
\end{align*}
By combining the inequalities $k<1/3$, $|x|>2k/(1-k)$, and $\cos(u)>4k^2/(|x|(1-k)^2)$ to the condition \eqref{condForL}, it follows that we can choose a plus for the sign $\pm$ of the root \eqref{rootForRInJ} only if
\begin{align*}
k<\frac{1}{3},\quad
|x|>\frac{2k}{1-k},\quad\text{and}\quad
\cos(u)\geq\frac{4k^2+\sqrt{(3k^2+2k-1)(4k^2-|x|^2(1-k)^2)}}{(1-k)^2|x|}.
\end{align*}

Next, we will study the root \eqref{rootForRInJ} with minus. Suppose again that the condition \eqref{condForL} hold. It can be verified that
\begin{align*}
\frac{4k^2-(1-k)^2|x|\cos(u)-(1-k)\sqrt{4k^2(1+|x|^2-2|x|\cos(u))-(1-k)^2|x|^2\sin^2(u)}}{3k^2+2k-1}<1  
\end{align*}
regardless of whether $k<1/3$ or $k>1/3$. If $k>1/3$,
\begin{align*}
&\frac{4k^2-(1-k)^2|x|\cos(u)-(1-k)\sqrt{4k^2(1+|x|^2-2|x|\cos(u))-(1-k)^2|x|^2\sin^2(u)}}{3k^2+2k-1}>0\\
&\Leftrightarrow\quad
4k^2-(1-k)^2|x|\cos(u)>(1-k)\sqrt{4k^2(1+|x|^2-2|x|\cos(u))-(1-k)^2|x|^2\sin^2(u)}.
\end{align*}
This cannot hold if $\cos(u)\geq4k^2/(|x|(1-k)^2)$. If $k>1/3$ and $\cos(u)<4k^2/(|x|(1-k)^2)$, then
\begin{align*}
&4k^2-(1-k)^2|x|\cos(u)>(1-k)\sqrt{4k^2(1+|x|^2-2|x|\cos(u))-(1-k)^2|x|^2\sin^2(u)}\\
&\Leftrightarrow\quad
(3k^2+2k-1)(4k^2-|x|^2(1-k)^2)>0
\quad\Leftrightarrow\quad
|x|>2k/(1-k),
\end{align*}
which is true since $2k/(1-k)>1$ for all $k>1/3$. Similarly, if $k<1/3$,
\begin{align*}
&\frac{4k^2-(1-k)^2|x|\cos(u)-(1-k)\sqrt{4k^2(1+|x|^2-2|x|\cos(u))-(1-k)^2|x|^2\sin^2(u)}}{3k^2+2k-1}>0\\
&\Leftrightarrow\quad
4k^2-(1-k)^2|x|\cos(u)<(1-k)\sqrt{4k^2(1+|x|^2-2|x|\cos(u))-(1-k)^2|x|^2\sin^2(u)},
\end{align*}
which holds if either $\cos(u)\geq4k^2/(|x|(1-k)^2)$ or $|x|<2k/(1-k)$. By combining these observations with the condition \eqref{condForL}, we have
\begin{align*}
&k<\frac{1}{3}\quad\text{and}\quad|x|<\frac{2k}{1-k},
\quad\text{or}\\
&k<\frac{1}{3},\quad|x|>\frac{2k}{1-k},\quad\text{and}\quad
\cos(u)\geq\frac{4k^2+\sqrt{(3k^2+2k-1)(4k^2-|x|^2(1-k)^2)}}{(1-k)^2|x|},\quad\text{or}\\
&k>\frac{1}{3},\quad\text{and}\quad
\cos(u)\leq\frac{4k^2-\sqrt{(3k^2+2k-1)(4k^2-|x|^2(1-k)^2)}}{(1-k)^2|x|}.
\end{align*}
The theorem follows now.
\end{proof}

The result of the following corollary can be better understood by looking at Figure \ref{fig4}.

\begin{figure}[ht]
    \centering
    \begin{tikzpicture}
    \node[inner sep=0pt] at (0,0)
    {\includegraphics[width=7cm,trim={1cm 2.5cm 1cm 3cm},clip]{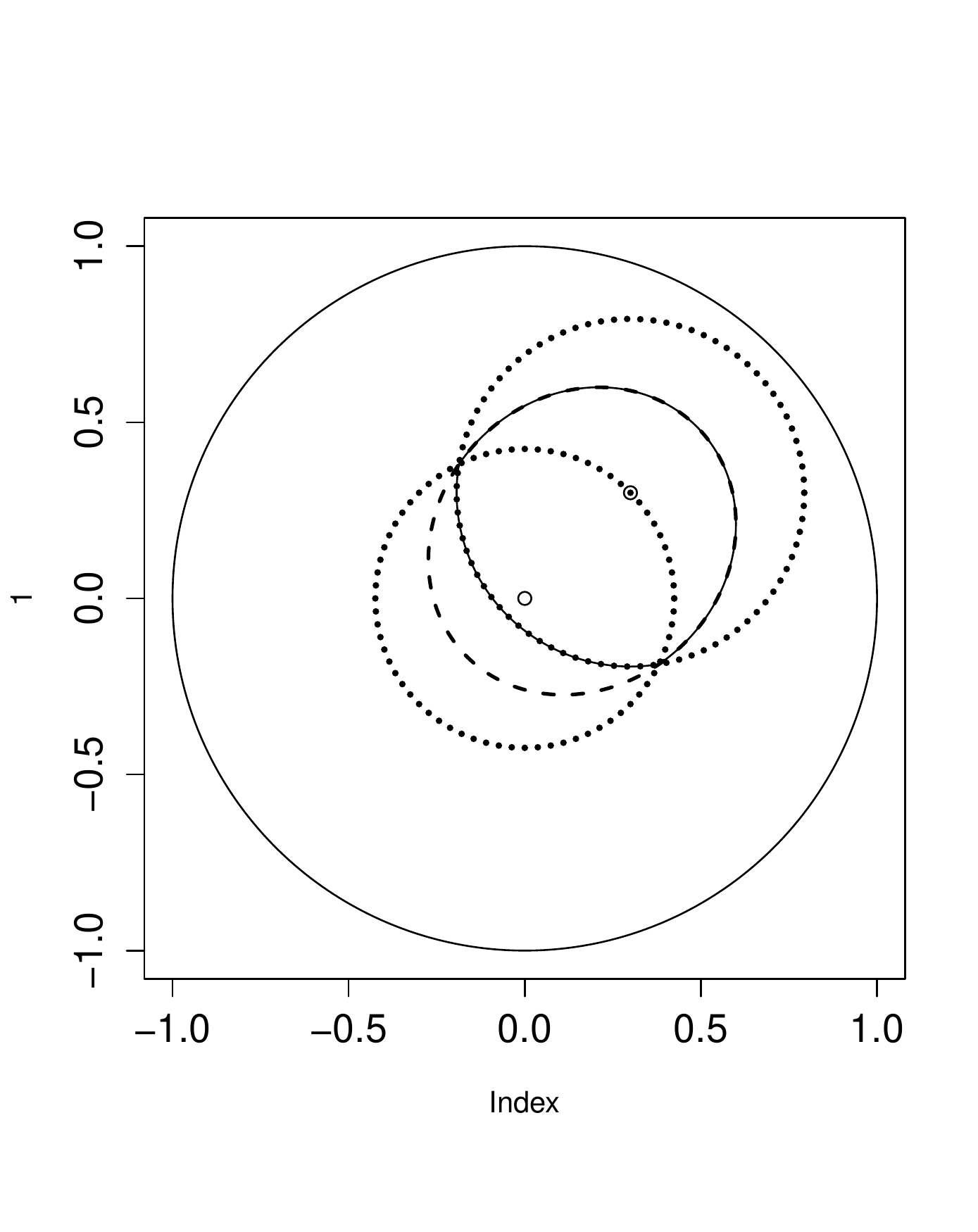}};
    \node[scale=1.3] at (1.2,1.5) {$x$};
    \node[scale=1.3] at (0.3,0.6) {$0$};
    \end{tikzpicture}
    \caption{For $x=0.3+0.3i$ and $k=0.3$, the Euclidean circles $S^1(0,|x|)$ and $S^1(x,2k(1-|x|)/(1-k))$ as dotted line, the circle $S_\Upsilon(x,k)$ defined with the function $\Upsilon_{\B^2}(x,y)$ of Theorem \ref{thm_yCirc} as dashed line, and the $j^*$-metric circle $S_{j^*}(x,k)$ as solid line in the domain $G=\B^2$.}
    \label{fig4}
\end{figure}

\begin{corollary}\label{cor_jBEucS}
For $x\in G=\B^n$ and $0<k<1$,
\begin{align*}
&S_{j^*}(x,k)=S^{n-1}(0,2k/(1+k))\quad\text{if}\quad x=0,\quad\text{and otherwise}\\
&S_{j^*}(x,k)=
\left(S^{n-1}\left(x,\frac{2k(1-|x|)}{1-k}\right)\cap\overline{B}^n(0,|x|)\right)\cup
(S_\Upsilon(x,k)\setminus\overline{B}^n(0,|x|)),\\
\end{align*}
where $S_\Upsilon(x,k)$ is as in Theorem \ref{thm_yCirc}.
\end{corollary}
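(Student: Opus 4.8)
The plan is to exploit the fact that the value of $j^*_{\B^n}(x,y)$ depends on which of the two distances $d_{\B^n}(x)=1-|x|$ and $d_{\B^n}(y)=1-|y|$ is smaller, and to split the sphere $S_{j^*}(x,k)$ into two pieces according to this dichotomy. Recall from \eqref{def_j} that in $G=\B^n$,
\begin{align*}
j^*_{\B^n}(x,y)=\frac{|x-y|}{|x-y|+2\min\{1-|x|,1-|y|\}}.
\end{align*}
For points $y$ with $|y|\le|x|$, that is $y\in\overline{B}^n(0,|x|)$, the minimum equals $1-|x|$, so the defining equation $j^*_{\B^n}(x,y)=k$ becomes $|x-y|/(|x-y|+2(1-|x|))=k$, which depends only on $|x-y|$ and solves to $|x-y|=2k(1-|x|)/(1-k)$. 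Hence on this region the $j^*$-sphere is exactly the intersection of the Euclidean sphere $S^{n-1}(x,2k(1-|x|)/(1-k))$ with $\overline{B}^n(0,|x|)$.

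For points $y$ with $|y|>|x|$, that is $y\notin\overline{B}^n(0,|x|)$, the minimum equals $1-|y|$, so $j^*_{\B^n}(x,y)$ coincides with the auxiliary function $\Upsilon_{\B^n}(x,y)=|x-y|/(|x-y|+2-2|y|)$ of Theorem \ref{thm_yCirc}. Therefore the $j^*$-sphere restricted to this region is precisely $S_\Upsilon(x,k)\setminus\overline{B}^n(0,|x|)$, whose explicit description is already furnished by Theorem \ref{thm_yCirc}. Taking the union of the two pieces yields the claimed formula for $x\ne0$. The degenerate case $x=0$ I would handle separately and directly: then $d_{\B^n}(x)=1\ge1-|y|$ for every $y$, so the minimum equals $1-|y|$ and $j^*_{\B^n}(0,y)=|y|/(2-|y|)$; setting this equal to $k$ gives $|y|=2k/(1+k)$, i.e.\ $S_{j^*}(0,k)=S^{n-1}(0,2k/(1+k))$.

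The only point requiring care — and the closest thing to an obstacle in an otherwise routine argument — is the boundary sphere $S^{n-1}(0,|x|)$ where $|y|=|x|$ and the two case formulas overlap. There one checks that $1-|x|=1-|y|$, so the two expressions for $j^*_{\B^n}(x,y)$ agree and such a point is correctly assigned to the first (closed) piece; this is exactly why the statement uses the closed ball $\overline{B}^n(0,|x|)$ in the Euclidean-sphere term and its open complement in the $\Upsilon$-sphere term, so that the union is a genuine partition of $S_{j^*}(x,k)$ with no double counting and no omitted points.
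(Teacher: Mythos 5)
Your proposal is correct and takes essentially the same approach as the paper's own proof: you split $S_{j^*}(x,k)$ according to whether $\min\{1-|x|,1-|y|\}$ is attained at $x$ (i.e.\ $y\in\overline{B}^n(0,|x|)$, where the defining equation reduces to the Euclidean sphere $S^{n-1}(x,2k(1-|x|)/(1-k))$) or at $y$ (where $j^*_{\B^n}$ coincides with $\Upsilon_{\B^n}$ of Theorem \ref{thm_yCirc}), and your direct computation for $x=0$ is the same trivial calculation the paper delegates to Lemma \ref{lem_sForx0} via $j^*_{\B^n}(y,0)=s_{\B^n}(y,0)$. Your closing check that the two case formulas agree on $S^{n-1}(0,|x|)$ is a harmless refinement that the paper leaves implicit.
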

\begin{proof}
The first part of the corollary follows directly from Lemma \ref{lem_sForx0} because $j^*_{\B^n}(y,0)=s_{\B^n}(y,0)$ for all $y\in\B^n$.

Suppose below that $x\neq0$ and denote $r=|x-y|$. Consider first the part of the $j^*$-metric sphere $S_{j^*}(x,k)$ in the closure of the Euclidean metric ball $B^n(0,|x|)$. For all $y\in\overline{B}^n(0,|x|)$, $|y|\leq|x|$ and $j^*_{\B^n}(x,y)=r/(r+2-2|x|)=k$ holds if and only if $r=2k(1-|x|)/(1-k)$. Consequently,
\begin{align*}
S_{j^*}(x,k)\cap\overline{B}^n(0,|x|)=S^{n-1}\left(x,\frac{2k(1-|x|)}{1-k}\right)\cap\overline{B}^n(0,|x|).   
\end{align*}

If $|y|>|x|$, then the distance $j^*(x,y)$ is equivalent to the value $\Upsilon_{\B^n}(x,y)$ of the function introduced in Theorem \ref{thm_yCirc}, so
\begin{align*}
S_{j^*}(x,k)\setminus\overline{B}^n(0,|x|)=S_\Upsilon(x,k)\setminus\overline{B}^n(0,|x|),  
\end{align*}
and the corollary follows.
\end{proof}

\begin{remark}
Note that Corollary \ref{cor_jBEucS} can be used to find the balls in the distance ratio metric, too, given $j_{\B^n}(x,y)={\rm th}(j^*_{\B^n}(x,y)/2)$.
\end{remark}

\begin{lemma}\label{lem_jBInc}
For $x\in G=\B^n$ and $0<r<1-|x|$, $B_{j^*}(x,k_0)\subseteq B^n(x,r)\subseteq B_{j^*}(x,k_1)$ if and only if
\begin{align*}
0<k_0\leq\frac{r}{2-|r-2|x||}
\quad\text{and}\quad
k_1\geq\frac{r}{2-2|x|-r}.
\end{align*}
\end{lemma}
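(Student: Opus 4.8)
The plan is to reduce both inclusions to a one–variable monotonicity analysis. Writing $\rho=|x-y|$ and using $\min\{1-|x|,1-|y|\}=1-\max\{|x|,|y|\}$, formula \eqref{def_j} becomes
\[
j^*_{\B^n}(x,y)=\frac{\rho}{\rho+2(1-\max\{|x|,|y|\})},
\]
and, because $r<1-|x|$ keeps every competitor inside $\B^n$, the denominator stays positive; the right-hand side is then strictly increasing in $\rho$ and in $\max\{|x|,|y|\}$ separately. Both inclusions amount to finding the extreme values of $j^*_{\B^n}(x,\cdot)$ on the Euclidean spheres $S^{n-1}(x,\rho)$ and tracking how they move with $\rho$. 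I would dispose of the degenerate case $x=0$ first: there $\max\{|x|,|y|\}=|y|$, so $j^*_{\B^n}(0,y)=|y|/(2-|y|)$ and both bounds collapse to $r/(2-r)$, matching the formulas with $|x|=0$.

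For the outer inclusion $B^n(x,r)\subseteq B_{j^*}(x,k_1)$, I would take any $y$ with $\rho<r$ and bound $\max\{|x|,|y|\}\le|x|+\rho$; monotonicity then gives $j^*_{\B^n}(x,y)\le\rho/(2-2|x|-\rho)<r/(2-2|x|-r)$, so $k_1\ge r/(2-2|x|-r)$ suffices. Sharpness comes from pushing $y$ out along the ray from the origin through $x$: the points $x(1+(r-\varepsilon)/|x|)$ lie in $B^n(x,r)$ and have $j^*$-distance $(r-\varepsilon)/(2-2|x|-(r-\varepsilon))\to r/(2-2|x|-r)$, so no smaller $k_1$ can work.

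For the inner inclusion $B_{j^*}(x,k_0)\subseteq B^n(x,r)$, note that it holds exactly when every $y\in\B^n$ with $\rho\ge r$ satisfies $j^*_{\B^n}(x,y)\ge k_0$, i.e. when $k_0\le\inf_{\rho\ge r}j^*_{\B^n}(x,y)$. Here the decisive point, and the main obstacle, is that the minimizing configuration changes at $r=2|x|$: since $|y|\le|x|$ forces $\rho\le2|x|$ by the triangle inequality, when $r>2|x|$ every admissible competitor has $|y|>|x|$ and one must instead use $|y|\ge\rho-|x|$. Splitting accordingly, for $r\le2|x|$ the two cases $|y|\le|x|$ and $|y|>|x|$ both yield $j^*_{\B^n}(x,y)\ge r/(r+2(1-|x|))$, while for $r>2|x|$ the bound $|y|\ge\rho-|x|$ gives $j^*_{\B^n}(x,y)\ge\rho/(2+2|x|-\rho)\ge r/(2+2|x|-r)$. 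The two expressions $r/(r+2(1-|x|))$ and $r/(2+2|x|-r)$ are precisely $r/(2-|r-2|x||)$ in the respective regimes, and the single competitor $y^*=x(1-r/|x|)\in S^{n-1}(x,r)$ attains them, so the infimum equals $r/(2-|r-2|x||)$ and is realized on the sphere. Since $y^*$ lies on $S^{n-1}(x,r)$ rather than inside it, this both shows that $k_0\le r/(2-|r-2|x||)$ is sufficient and, by exhibiting $y^*$ as a point of $B_{j^*}(x,k_0)\setminus B^n(x,r)$ for any larger $k_0$, that it is necessary.
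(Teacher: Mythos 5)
Your proof is correct and takes essentially the same route as the paper: both reduce $j^*_{\B^n}(x,y)$ to $\rho/(\rho+2(1-\max\{|x|,|y|\}))$ with $\rho=|x-y|$, extremize at Euclidean distance $r$ using the same case split at $r=2|x|$, and identify the same extremal points on the line $L(0,x)$. Your complement formulation of the inner inclusion and the limiting sequence for the sharpness of $k_1$ merely spell out the step the paper compresses into the assertion that each inclusion holds if and only if the corresponding bound holds for all $y\in S^{n-1}(x,r)$.
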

\begin{proof}
The $j^*$-metric ball $B_{j^*}(x,k_0)$ is included in the Euclidean ball $B^n(x,r)$ if and only if $k_0\leq j^*_{\B^n}(x,y)$ for all $y\in S^{n-1}(x,r)$, and $B_{j^*}(x,k_1)$ contains $B^n(x,r)$ if and only if $j^*_{\B^n}(x,y)\leq k_1$ for $y\in S^{n-1}(x,r)$. For all $y\in S^{n-1}(x,r)$,
\begin{align*}
j^*_{\B^n}(x,y)=\frac{r}{r+2-2\max\{|x|,\sqrt{r^2+|x|^2-2r|x|\cos(u)}\}},    
\end{align*}
where $u=\measuredangle OXY$ or, if $x=0$, $u=\measuredangle ZOY$ with $z=1$. Clearly, the minimum value of the distance $j^*_{\B^n}(x,y)$ with respect to $y\in S^{n-1}(x,r)$ is
\begin{align*}
\frac{r}{r+2-2\max\{|x|,||x|-r|\}} 
=\begin{cases}
r/(r+2-2|x|)\text{ if }r\leq2|x|,\\
r/(r+2-2(r-|x|))\text{ if }r>2|x|
\end{cases}
\end{align*}
and the maximum value $r/(2-2|x|-r)$, from which the lemma follows. 
\end{proof}

\begin{corollary}
For $x\in G=\B^n$ and $0<k<1$, $B^n(x,r_0)\subseteq B_{j^*}(x,k)\subseteq B^n(x,r_1)$ if and only if
\begin{align*}
0<r_0\leq\frac{2k(1-|x|)}{1+k}\quad\text{and}\quad
r_1\geq\min\left\{\frac{2k(1-|x|)}{1-k},\frac{2k(1+|x|)}{1+k}\right\}
\end{align*}
\end{corollary}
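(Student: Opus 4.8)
The plan is to obtain this as a direct inversion of Lemma \ref{lem_jBInc}: there one fixes a Euclidean ball and records which $j^*$-balls it traps, whereas here one fixes the $j^*$-ball $B_{j^*}(x,k)$ and asks for the extremal concentric Euclidean balls. The two inclusions decouple, so I would treat them separately. For the inner inclusion $B^n(x,r_0)\subseteq B_{j^*}(x,k)$, I would invoke the ``$B^n(x,r)\subseteq B_{j^*}(x,k_1)$'' half of Lemma \ref{lem_jBInc} with $r=r_0$ and $k_1=k$; its condition $k\geq r_0/(2-2|x|-r_0)$ rearranges at once to $r_0\leq 2k(1-|x|)/(1+k)$, and since $2k/(1+k)<1$ this bound automatically satisfies $r_0<1-|x|$, so the lemma genuinely applies and the constant is sharp.

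For the outer inclusion $B_{j^*}(x,k)\subseteq B^n(x,r_1)$ I would use the ``$B_{j^*}(x,k_0)\subseteq B^n(x,r)$'' half of Lemma \ref{lem_jBInc} with $k_0=k$ and $r=r_1$, whose condition reads $k\leq r_1/(2-|r_1-2|x||)$. Removing the absolute value splits this into two cases: for $r_1\leq 2|x|$ it gives $r_1\geq 2k(1-|x|)/(1-k)$, and for $r_1>2|x|$ it gives $r_1\geq 2k(1+|x|)/(1+k)$. A short check shows $2k(1-|x|)/(1-k)\leq 2|x|$ exactly when $k\leq|x|$ and $2k(1+|x|)/(1+k)>2|x|$ exactly when $k>|x|$, and the comparison of the two candidates reduces to the same dichotomy $k\lessgtr|x|$; hence the governing value is always the smaller of the two, which is precisely the $\min$ expression in the statement.

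The hard part is that Lemma \ref{lem_jBInc} is stated only for $0<r<1-|x|$, whereas the circumscribed radius $r_1$ may exceed $1-|x|$ (the smallest concentric Euclidean ball containing $B_{j^*}(x,k)$ can protrude from $\B^n$), so the citation alone does not cover every $k$. To close this gap I would derive the outer bound directly from the description of $S_{j^*}(x,k)$ in Corollary \ref{cor_jBEucS}. Writing $j^*_{\B^n}(x,y)=|x-y|/(|x-y|+2-2\max\{|x|,|y|\})$, which is strictly increasing in $|x-y|$, one has $r_1=\max_{y\in S_{j^*}(x,k)}|x-y|$, and I would show this maximum is attained where the diameter $L(0,x)$ meets $S_{j^*}(x,k)$. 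Every point of the spherical-cap piece $S^{n-1}(x,2k(1-|x|)/(1-k))\cap\overline{B}^n(0,|x|)$ lies at distance $2k(1-|x|)/(1-k)$ from $x$, while along the $\Upsilon$-piece the distance is maximised at the point diametrically opposite $x$ through the origin, where it equals $2k(1+|x|)/(1+k)$; the cap is nonempty and furnishes the farthest point exactly when $k\leq|x|$, and the $\Upsilon$-piece does so exactly when $k>|x|$. In either case $r_1=\min\{2k(1-|x|)/(1-k),2k(1+|x|)/(1+k)\}$, in agreement with the case split above and without ever invoking $r_1<1-|x|$. No such repair is needed for the inner radius, since $r_0=2k(1-|x|)/(1+k)<1-|x|$ always.
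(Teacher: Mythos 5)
Your proposal is correct, and its first two paragraphs are exactly the paper's route: the paper's entire proof of this corollary is the single line ``Follows from Lemma \ref{lem_jBInc}'', i.e.\ the same inversion of the two halves of that lemma, with the same case split on $|r_1-2|x||$ and the same dichotomy $k\lessgtr|x|$ governing the minimum. What distinguishes your write-up is the third paragraph: the gap you flag is real and is present in the paper's own one-line proof. Lemma \ref{lem_jBInc} assumes $0<r<1-|x|$, and the optimal circumscribed radius violates this for moderately large $k$ --- e.g.\ if $1/3\leq k\leq|x|$ then $2k(1-|x|)/(1-k)\geq 1-|x|$, the exact analogue for $j^*$ of the dichotomy recorded in Lemma \ref{lem_eucInB} for $s$. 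In that regime neither direction of the outer inclusion (sufficiency of $r_1\geq\min\{\cdot,\cdot\}$, nor failure for $r_1\in[1-|x|,\min\{\cdot,\cdot\})$) follows from the cited lemma, so your repair is genuinely needed, and the version via Corollary \ref{cor_jBEucS} is sound: on the $\Upsilon$-piece $|x-y|=2k(1-|y|)/(1-k)$ is decreasing in $|y|$, so its constrained maximum sits at the antipodal axis point when $k>|x|$ (value $2k(1+|x|)/(1+k)$), while for $k\leq|x|$ that piece only approaches the cap value $2k(1-|x|)/(1-k)$ as $|y|\downarrow|x|$ --- consistent with your attainment claim. An even shorter patch avoiding Corollary \ref{cor_jBEucS}: for $y\in B_{j^*}(x,k)$ with $|y|\leq|x|$ one has both $|x-y|<2k(1-|x|)/(1-k)$ and $|x-y|\leq 2|x|<2k(1+|x|)/(1+k)$ when $k>|x|$, while for $|y|>|x|$ the bound $1-|y|\leq 1+|x|-|x-y|$ gives $|x-y|<2k(1+|x|)/(1+k)$, and $1-|y|<1-|x|$ gives $|x-y|<2k(1-|x|)/(1-k)$; so every point satisfies $|x-y|<\min\{2k(1-|x|)/(1-k),\,2k(1+|x|)/(1+k)\}$ directly, with sharpness witnessed by the axis points you already use. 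Only cosmetic caveats remain: your phrase ``the diameter $L(0,x)$'' presumes $x\neq0$, but the $x=0$ case is immediate since $S_{j^*}(0,k)=S^{n-1}(0,2k/(1+k))$ and the minimum there equals $2k/(1+k)$.
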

\begin{proof}
Follows from Lemma \ref{lem_jBInc}.
\end{proof}

The following inequality helps to formulate a result about the ball inclusion between the triangular ratio metric and the $j^*$-metric.

\begin{corollary}\label{cor_sjIne}
For any domain $G\subsetneq\R^n$ and all $x,y\in G$, 
\begin{align*}
s_G(x,y)\geq\frac{j^*_G(x,y)}{1-j^*_G(x,y)}.    
\end{align*}
\end{corollary}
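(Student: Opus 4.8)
To establish Corollary \ref{cor_sjIne}, the plan is to translate the claimed estimate into an elementary comparison of the two denominators that appear in the definitions of $s_G$ and $j^*_G$. Writing $p=|x-y|$ and $m=\min\{d_G(x),d_G(y)\}$, formula \eqref{def_j} reads $j^*_G(x,y)=p/(p+2m)$, whence $1-j^*_G(x,y)=2m/(p+2m)$ and a one-line computation gives the identity
\begin{align*}
\frac{j^*_G(x,y)}{1-j^*_G(x,y)}=\frac{p}{2m}=\frac{|x-y|}{2\min\{d_G(x),d_G(y)\}}.
\end{align*}
Since by definition $s_G(x,y)=|x-y|/\inf_{z\in\partial G}(|x-z|+|z-y|)$, the estimate of the corollary is, after discarding the trivial case $x=y$ in which both sides vanish, equivalent to a comparison of the quantity $\inf_{z\in\partial G}(|x-z|+|z-y|)$ with $2\min\{d_G(x),d_G(y)\}$.

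The key step is therefore a bound on the triangular-ratio denominator in terms of the distances to the boundary. For every $z\in\partial G$, the definition $d_G(w)=\inf_{\zeta\in\partial G}|w-\zeta|$ gives $|x-z|\ge d_G(x)$ and $|z-y|\ge d_G(y)$, so that $|x-z|+|z-y|\ge d_G(x)+d_G(y)$; taking the infimum over $z\in\partial G$ and using $d_G(x)+d_G(y)\ge 2\min\{d_G(x),d_G(y)\}$ yields
\begin{align*}
\inf_{z\in\partial G}(|x-z|+|z-y|)\ge d_G(x)+d_G(y)\ge 2\min\{d_G(x),d_G(y)\}=2m.
\end{align*}

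Combining the two displays finishes the argument: the factor $|x-y|$ is common to both numerators and positive once $x\ne y$, so dividing it by the two denominators and invoking the monotonicity of $t\mapsto |x-y|/t$ on $(0,\infty)$ passes the comparison of denominators to the asserted comparison between $s_G(x,y)$ and $j^*_G(x,y)/(1-j^*_G(x,y))$. I expect no serious obstacle here: the whole content sits in the first-step identity and in the sharpening that the infimum defining $s_G$ is controlled by $d_G(x)+d_G(y)$, hence by $2m$, rather than merely by $|x-y|$; the rest is the elementary monotonicity just mentioned.
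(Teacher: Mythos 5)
Your two displayed estimates are correct, but your concluding paragraph gets the direction of the final comparison backwards --- and this matters here in an unusual way. From $j^*_G(x,y)=p/(p+2m)$ you correctly obtain $j^*_G(x,y)/(1-j^*_G(x,y))=p/(2m)$, and you correctly prove $\inf_{z\in\partial G}(|x-z|+|z-y|)\ge d_G(x)+d_G(y)\ge 2m$. But since $t\mapsto p/t$ is \emph{decreasing}, a larger denominator gives a smaller quotient, so your comparison of denominators yields
\begin{align*}
s_G(x,y)=\frac{p}{\inf_{z\in\partial G}(|x-z|+|z-y|)}\le\frac{p}{2m}=\frac{j^*_G(x,y)}{1-j^*_G(x,y)},
\end{align*}
the \emph{reverse} of the printed claim. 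To get the printed ``$\ge$'' you would need $\inf_{z\in\partial G}(|x-z|+|z-y|)\le 2m$, which is false in general; indeed the printed inequality itself fails: for $G=\B^n$, $x=0$, $y\ne0$ one has $s_{\B^n}(0,y)=|y|/(2-|y|)$ and $j^*_{\B^n}(0,y)=|y|/(2-|y|)$, whence $j^*/(1-j^*)=|y|/(2-2|y|)>s_{\B^n}(0,y)$.

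The resolution is that the inequality sign in the statement is a misprint in the paper. The paper's own proof derives exactly the ``$\le$'' direction, $s_G(x,y)\le(e^{j_G(x,y)}-1)/2=j^*_G(x,y)/(1-j^*_G(x,y))$, by citing a lemma of Hariri, Vuorinen and Zhang, and the application in Lemma \ref{lem_jSBinG} (the inclusion $B_{j^*}(x,t/(1+t))\subseteq B_s(x,t)$) needs precisely $s_G\le j^*_G/(1-j^*_G)$, equivalently $j^*_G\ge s_G/(1+s_G)$. So your displayed computation is in fact a correct and more self-contained proof of the \emph{intended} statement than the paper's: you re-derive the cited bound $s_G\le(e^{j_G}-1)/2$ directly from the definitions via $|x-z|\ge d_G(x)$ and $|z-y|\ge d_G(y)$, avoiding the external reference and the hyperbolic-function bookkeeping $j^*_G={\rm th}(j_G/2)$. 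What is wrong is only your last sentence asserting that the comparison yields the ``asserted'' $\ge$; had you tracked the direction through the final division, you would have noticed the statement's typo. Fix the direction (and flag the misprint) and the proof is complete.
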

\begin{proof}
By \cite[Lemma 2.1, p. 1124]{hvz}
\begin{align*}
s_G(x,y)\leq\frac{e^{j_G(x,y)}-1}{2}=\frac{e^{2{\rm arth}(j^*_G(x,y))}-1}{2}=\frac{j^*_G(x,y)}{1-j^*_G(x,y)}.    
\end{align*}
\end{proof}

\begin{lemma}\label{lem_jSBinG}
For any domain $G\subsetneq\R^n$, $x\in G$, and $0<t<1$,
\begin{align*}
B_{j^*}\left(x,t/(1+t)\right)\subseteq B_s(x,t)\subseteq B_{j^*}(x,t), 
\end{align*}
where the latter inclusion above is sharp for all possible choices of a domain $G$, and the former inclusion can be replaced with
\begin{align*}
B_{j^*}\left(x,t/\min\{(1+t),\sqrt{2}\}\right)\subseteq B_s(x,t) 
\end{align*}
if $G$ is convex.
\end{lemma}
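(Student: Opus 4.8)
The plan is to reduce each of the three displayed inclusions to a pointwise comparison between $s_G$ and $j^*_G$, using the elementary equivalence
\[
B_{d_1}(x,a)\subseteq B_{d_2}(x,b)\quad\Longleftrightarrow\quad\bigl(d_1(x,y)<a\ \Rightarrow\ d_2(x,y)<b\ \text{for all }y\in G\bigr),
\]
and then to match the constants by scalar monotonicity. Every pointwise inequality I need is already available: Theorem~\ref{thm_sjbounds} gives $j^*_G\le s_G$ in general and $s_G\le\sqrt2\,j^*_G$ when $G$ is convex, while the proof of Corollary~\ref{cor_sjIne} supplies the sharper general upper bound $s_G(x,y)\le j^*_G(x,y)/(1-j^*_G(x,y))$. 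Note that the coarser bound $s_G\le 2j^*_G$ from Theorem~\ref{thm_sjbounds} would only yield the weaker inclusion $B_{j^*}(x,t/2)\subseteq B_s(x,t)$, since $t/(1+t)>t/2$; this is precisely why the refined estimate of Corollary~\ref{cor_sjIne} is needed.

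First I would dispatch the outer inclusion $B_s(x,t)\subseteq B_{j^*}(x,t)$: if $s_G(x,y)<t$ then $j^*_G(x,y)\le s_G(x,y)<t$, so $y\in B_{j^*}(x,t)$. For the general inner inclusion $B_{j^*}(x,t/(1+t))\subseteq B_s(x,t)$, I would use that $\varphi(u)=u/(1-u)$ is increasing on $[0,1)$ and satisfies $\varphi\bigl(t/(1+t)\bigr)=t$; hence $j^*_G(x,y)<t/(1+t)$ forces $s_G(x,y)\le\varphi(j^*_G(x,y))<t$, i.e.\ $y\in B_s(x,t)$. For the convex refinement, the bound $s_G\le\sqrt2\,j^*_G$ gives in the same way $B_{j^*}(x,t/\sqrt2)\subseteq B_s(x,t)$; since the general inclusion $B_{j^*}(x,t/(1+t))\subseteq B_s(x,t)$ is also valid for convex $G$, I would retain whichever radius is larger, namely $\max\{t/(1+t),\,t/\sqrt2\}=t/\min\{1+t,\sqrt2\}$, with crossover at $t=\sqrt2-1$ where $1+t=\sqrt2$. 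As the larger-radius ball contains the smaller one, this is exactly the asserted (stronger) containment.

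The only genuinely nontrivial point, and the main obstacle, is the sharpness of $B_s(x,t)\subseteq B_{j^*}(x,t)$ for every domain: I must show that $t$ cannot be replaced by any $t'<t$, i.e.\ $\sup\{j^*_G(x,y):s_G(x,y)<t\}=t$. Since $j^*_G\le s_G$, it suffices to realize, for values of $s_G$ approaching $t$ from below, configurations in which $j^*_G=s_G$. Equality here forces the extremal broken path defining $s_G$ to have length exactly $|x-y|+2\min\{d_G(x),d_G(y)\}$, which occurs when $x$ and $y$ are collinear with a nearest boundary point.

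To make this precise in an arbitrary domain, I would fix $x$, choose $w\in\partial G$ with $|x-w|=d_G(x)$, and take $y$ on the segment from $x$ toward $w$ at distance $\delta\in(0,d_G(x))$ from $x$. Then $|x-y|=\delta$, and since $d_G$ is $1$-Lipschitz while $|y-w|=d_G(x)-\delta$, one gets $d_G(y)=d_G(x)-\delta$ with $w$ still nearest to $y$. The general lower bound $\inf_{z\in\partial G}(|x-z|+|z-y|)\ge d_G(x)+d_G(y)$ (from $|x-z|\ge d_G(x)$ and $|z-y|\ge d_G(y)$) is then matched by the choice $z=w$, which collinearity makes an equality, so $\inf_{z}(|x-z|+|z-y|)=d_G(x)+d_G(y)=2d_G(x)-\delta$ and thus $s_G(x,y)=\delta/(2d_G(x)-\delta)=j^*_G(x,y)$. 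As $\delta$ ranges over $(0,d_G(x))$ this common value sweeps all of $(0,1)$, so in particular it can be taken arbitrarily close to $t$ from below, forcing the supremum to equal $t$ and establishing sharpness in every domain. (For $G=\B^n$ this is just the collinear-with-origin case of Lemma~\ref{lem_sCollinear}, which I would cite as a cross-check.) Verifying that this collinear configuration genuinely attains $\inf_{z}(|x-z|+|z-y|)=d_G(x)+d_G(y)$, rather than merely bounding it, is the step that requires the care indicated above.
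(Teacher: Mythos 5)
Your proof is correct and follows essentially the same route as the paper: outer inclusion from $j^*_G\le s_G$, inner inclusion from $s_G\le j^*_G/(1-j^*_G)$ (you rightly use this inequality in the direction actually derived in the proof of Corollary~\ref{cor_sjIne} --- the corollary's statement has the inequality sign reversed, evidently a typo), the convex refinement from $s_G\le\sqrt2\,j^*_G$, and sharpness via points collinear with a nearest boundary point where $s_G=j^*_G$. The only (cosmetic) difference is that the paper exhibits the single point $y=x+2t(z-x)/(1+t)$ with $s_G(x,y)=j^*_G(x,y)=t$, whereas you run the same configuration over a parameter $\delta$, which in addition treats the open-ball supremum $\sup\{j^*_G(x,y):s_G(x,y)<t\}=t$ slightly more carefully than the paper does.
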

\begin{proof}
The inequality of Corollary \ref{cor_sjIne} is equivalent to $j^*_G(x,y)\leq s_G(x,y)/(1+s_G(x,y))$ and, by Theorem \ref{thm_sjbounds}, the inequality $j^*_G(x,y)\leq s_G(x,y)$ holds for all $x,y\in G$. It now follows that
\begin{align*}
&B_{j^*}(x,t/(1+t))=\{y\in\B^n|j^*_{\B^n}(x,y)<t/(1+t)\} \subseteq\{y\in\B^n|s_{\B^n}(x,y)<t\}=B_s(x,t)\\ 
&\subseteq B_{j^*}(x,t).    
\end{align*}
To show the inclusion $B_s(x,t)\subseteq B_{j^*}(x,t)$ is sharp, fix $x\in G$, $z\in S^{n-1}(x,d_G(x))\cap\partial G$, and $y=x+2t(z-x)/(1+t)$ and note that
\begin{align*}
s_G(x,y)
=\frac{|x-y|}{|x-z|+|z-y|}
=t
=\frac{|x-y|}{|x-y|+2\min\{d_G(x),d_G(y)\}}
=j^*_G(x,y).
\end{align*}
The last part of the lemma follows from Theorem \ref{thm_sjbounds}.
\end{proof}

Given $j^*_G(x,y)={\rm th}(j_G(x,y)/2)$, the $j^*$-metric in the former result can be easily replaced with the distance ratio metric.

\begin{corollary}\label{cor_sjG}
For any domain $G\subsetneq\R^n$, $x\in G$, and $0<t<1$,
\begin{align*}
B_j(x,\log(1+2t))\subseteq B_s(x,t)\subseteq B_j(x,\log((1+t)/(1-t))),
\end{align*}
where latter inclusion above is sharp for all possible choices of a domain $G$ and the former inclusion can be replaced with
\begin{align*}
B_j\left(x,\max\left\{\log\left(\frac{t+\sqrt{2}}{\sqrt{2}-t}\right),\log(1+2t)\right\}\right)\subseteq B_s(x,t)    
\end{align*}
if $G$ is convex.
\end{corollary}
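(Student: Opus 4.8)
The plan is to translate each $j^*$-metric ball appearing in Lemma \ref{lem_jSBinG} into a distance ratio metric ball by exploiting the fact that $j^*_G(x,y)={\rm th}(j_G(x,y)/2)$ is a strictly increasing function of $j_G(x,y)$. Since ${\rm th}$ maps $(0,\infty)$ bijectively onto $(0,1)$, for any threshold $0<k<1$ we have $j^*_G(x,y)<k$ if and only if $j_G(x,y)<2\,{\rm arth}(k)$, so that $B_{j^*}(x,k)=B_j(x,2\,{\rm arth}(k))$. This single identity converts the chain of inclusions in Lemma \ref{lem_jSBinG} directly into a chain of inclusions between $j$-balls, and the whole corollary reduces to computing the three relevant values of $2\,{\rm arth}(k)$.

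Next I would simplify these using $2\,{\rm arth}(u)=\log\frac{1+u}{1-u}$. For the outer radius $k=t$ this gives $2\,{\rm arth}(t)=\log\frac{1+t}{1-t}$, recovering the upper ball $B_j(x,\log((1+t)/(1-t)))$. For the inner radius $k=t/(1+t)$ a short computation gives $2\,{\rm arth}(t/(1+t))=\log(1+2t)$, and for the convex improvement $k=t/\sqrt2$ it gives $2\,{\rm arth}(t/\sqrt2)=\log\frac{\sqrt2+t}{\sqrt2-t}$. Because ${\rm arth}$ is increasing, the minimum taken over the two denominators $1+t$ and $\sqrt2$ inside the $j^*$-radius $t/\min\{1+t,\sqrt2\}$ turns into a maximum of the corresponding $j$-radii, which is exactly the $\max$ appearing in the convex case of the statement.

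Finally, sharpness of the inclusion $B_s(x,t)\subseteq B_j(x,\log((1+t)/(1-t)))$ is inherited from the sharpness of $B_s(x,t)\subseteq B_{j^*}(x,t)$ established in Lemma \ref{lem_jSBinG}: since $k\mapsto 2\,{\rm arth}(k)$ is a strictly increasing bijection of the radius, any decrease of the $j$-radius below $\log((1+t)/(1-t))$ corresponds to a decrease of the $j^*$-radius below $t$, which would contradict the already-proven sharp $j^*$-inclusion. I do not anticipate a serious obstacle; the only genuine care needed lies in the routine ${\rm arth}$-to-logarithm algebra and in correctly tracking the reversal of $\min$ into $\max$ under the monotone re-parametrization of the radius, so that the convex bound comes out in the stated form.
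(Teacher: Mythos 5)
Your proposal is correct and follows essentially the same route as the paper, whose proof of Corollary \ref{cor_sjG} is exactly the observation that it ``follows from Lemma \ref{lem_jSBinG} and the formula \eqref{def_j}'': you have merely spelled out the identity $B_{j^*}(x,k)=B_j(x,2\,{\rm arth}(k))$, the three logarithmic radius computations, and the reversal of $\min$ into $\max$ under the increasing reparametrization, all of which check out.
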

\begin{proof}
Follows from Lemma \ref{lem_jSBinG} and the formula \eqref{def_j}.
\end{proof}

\begin{remark}
Corollary \ref{cor_sjG} proves an earlier conjecture in \cite[Conj. 7.7, p. 122]{h16}, and also improves the constant in the first of this conjecture.
\end{remark}

As noted already in Lemma \ref{lem_eucInB}, there is not always a Euclidean ball that is included in the unit ball, contains another ball defined with some hyperbolic type metric, and has the same center as the other ball, the following lemma offers a non-$x$-centered Euclidean ball that contains the balls $B_s(x,t)$ and $B_{j^*}(x,t)$ and is included in the unit ball if $t\geq|x|$.

\begin{lemma}\label{lem_bsjEuc}
For $G=\B^n$, $x\in\B^n\setminus\{0\}$, and $|x|\leq t<1$, $B_s(x,t)\subseteq B_{j^*}(x,t)\subseteq B^n(q,r)\subsetneq\B^n$ if
\begin{align*}
q=x-\frac{2tx}{1+t}
\quad\text{and}\quad
r=\frac{2t}{1+t}
\end{align*}
and $B^n(q,r)$ here is the smallest Euclidean ball containing the ball $B_s(x,t)$.
\end{lemma}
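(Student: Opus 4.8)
The plan is to prove the containments from the inside out. The first inclusion $B_s(x,t)\subseteq B_{j^*}(x,t)$ is immediate from Lemma \ref{lem_jSBinG}, so the substance of the statement is the second inclusion $B_{j^*}(x,t)\subseteq B^n(q,r)$, together with the facts that $B^n(q,r)$ lies in $\B^n$ and is the smallest Euclidean ball enclosing $B_s(x,t)$.

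The main idea for the second inclusion is to replace the $j^*$-condition by a single convex inequality. For every $y\in\B^n$ the inequality $j^*_{\B^n}(x,y)<t$ is equivalent to $(1-t)|x-y|<2t\min\{1-|x|,1-|y|\}$, and since $\min\{1-|x|,1-|y|\}\leq 1-|y|$, this forces $g(y)<2t$, where I set $g(y)=(1-t)|x-y|+2t|y|$. Thus $B_{j^*}(x,t)\subseteq\{y:g(y)<2t\}$. The function $g$ is a nonnegative combination of the convex functions $y\mapsto|x-y|$ and $y\mapsto|y|$, hence convex, so $\{g<2t\}$ is a convex set; it therefore suffices to prove $\{g<2t\}\subseteq B^n(q,r)$. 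I would do this by the standard convexity argument: a short computation gives $g(q)=\frac{4t(1-t)|x|}{1+t}$, which reduces to $2(1-t)|x|<1+t$ and so is below $2t$ under the hypothesis $|x|\leq t$; once I also know $g\geq 2t$ on the bounding sphere $S^{n-1}(q,r)$, any point of $\{g<2t\}$ lying outside $B^n(q,r)$ would, by convexity of $\{g<2t\}$ and the fact that $q$ lies inside, force a point of $S^{n-1}(q,r)$ with $g<2t$, a contradiction.

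The heart of the argument — and the step I expect to be the main obstacle — is the boundary estimate $g(y)\geq 2t$ for $y\in S^{n-1}(q,r)$. Here I would exploit that $x-q=\frac{2t}{1+t}x=r|x|\frac{x}{|x|}$, so that $|x-q|=r|x|$. Writing a boundary point as $y=q+r\omega$ with $\omega$ a unit vector making angle $\theta$ with $x/|x|$ and setting $c=\cos\theta\in[-1,1]$, both $|x-y|$ and $|y|$ become square roots of affine functions of $c$, and one obtains $g(y)=\frac{2t}{1+t}F(c)$ with
\[
F(c)=(1-t)\sqrt{|x|^2-2|x|c+1}+\sqrt{(1-t)^2|x|^2+4t(1-t)|x|c+4t^2},
\]
so the inequality to prove becomes $F(c)\geq 1+t$. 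The key observation is that each summand is the square root of a nonnegative affine function of $c$, hence concave in $c$, so $F$ is concave on $[-1,1]$. A direct evaluation (using $|x|\leq t$ to resolve the sign of the absolute value in the second radical at $c=-1$) gives $F(-1)=F(1)=1+t$, so the constant $1+t$ is exactly the chord of the concave function $F$ over $[-1,1]$; concavity then yields $F(c)\geq 1+t$ for all $c\in[-1,1]$, as required. The endpoints $c=\pm1$ correspond precisely to the collinear points $y_0,y_1$ of Lemma \ref{lem_colliIntS}, where equality holds.

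It remains to record the two easy endpoints. The inclusion $B^n(q,r)\subsetneq\B^n$ is equivalent to $|q|+r<1$, and since $q=\frac{1-t}{1+t}x$ one computes $|q|+r=\frac{(1-t)|x|+2t}{1+t}$, which is $<1$ exactly because $|x|<1$. For minimality I would note that $q$ and $r$ are, by construction, the midpoint and half-length of the segment $[y_1,y_0]$, where $y_0,y_1\in S_s(x,t)$ are the collinear points of Lemma \ref{lem_colliIntS} in the range $t\geq|x|$ (so that the minimum there equals $(1+|x|)/(1+t)$). Since $y_0,y_1$ lie in the closure of $B_s(x,t)$ and satisfy $|y_0-y_1|=2r$, every Euclidean ball enclosing $B_s(x,t)$ must have radius at least $r$, with equality only when its centre is the midpoint $q$; combined with $B_s(x,t)\subseteq B^n(q,r)$ this identifies $B^n(q,r)$ as the smallest enclosing ball.
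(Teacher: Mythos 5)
Your proposal is correct, and for the decisive step it takes a genuinely different route from the paper. Both arguments begin identically (the first inclusion via Lemma \ref{lem_jSBinG}, the identification of $q,r$ as midpoint and half-length of the segment $[y_1,y_0]$ from Lemma \ref{lem_colliIntS}, the containment $B^n(q,r)\subsetneq\B^n$ from $|q|+r=|y_0|<1$, and the same two-point diameter argument for minimality). For $B_{j^*}(x,t)\subseteq B^n(q,r)$, however, the paper parametrizes $S^{n-1}(q,r)$ by the angle $u=\measuredangle XQY$, computes $|y|$ and $|x-y|$ by the cosine rule, and claims that $(1-\max\{|x|,|y|\})/|x-y|<(1-t)/(2t)$ on the sphere by asserting that $|x|$, $|y|$, $|x-y|$ all increase in $|x|$ and evaluating at $|x|=0$; this implicitly also uses connectedness of the $j^*$-ball to pass from a sphere estimate to an inclusion. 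You instead majorize $B_{j^*}(x,t)$ by the convex set $\{g<2t\}$ with $g(y)=(1-t)|x-y|+2t|y|$ (note this is exactly the sublevel set of the function $\Upsilon_{\B^n}$ of Theorem \ref{thm_yCirc}, so your step is consistent with Corollary \ref{cor_jBEucS}), and prove $g\geq 2t$ on $S^{n-1}(q,r)$ by writing $g(q+r\omega)=\tfrac{2t}{1+t}F(c)$ with $F$ a sum of square roots of nonnegative affine functions of $c=\cos\theta$, hence concave, with $F(\pm1)=1+t$ (the hypothesis $|x|\leq t$ resolving the sign at $c=-1$), so the chord bound gives $F\geq 1+t$; convexity of $\{g<2t\}$ with $g(q)<2t$ then yields the inclusion without any connectivity input. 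Your version buys rigor at precisely the points where the paper is shaky: the paper's strict inequality $j^*_{\B^n}(x,y)>t$ on $S^{n-1}(q,r)$ actually fails at $y_0,y_1$ (one checks $j^*_{\B^n}(x,y_0)=j^*_{\B^n}(x,y_1)=t$, matching your observation that $c=\pm1$ are equality cases), and the claimed monotonicity of $|x-y|$ in $|x|$ is false when $\cos u>|x|$; your concavity argument sidesteps both issues and correctly delivers the non-strict bound, which suffices since $B_{j^*}(x,t)$ is defined by strict inequality. The paper's computation is shorter; yours is the one that actually closes the argument.
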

\begin{proof}
By Lemma \ref{lem_jSBinG}, $B_s(x,t)\subseteq B_{j^*}(x,t)$. Let $y_0$ and $y_1$ be the intersection points in $S_s(x,t)\cap L(0,x)$ found in Lemma \ref{lem_colliIntS}. Since $t\geq|x|$, 
\begin{align*}
y_0=x+\frac{2tx(1-|x|)}{|x|(1+t)}\quad\text{and}\quad
y_1=x-\frac{2tx(1+|x|)}{|x|(1+t)}    
\end{align*}
and it can be easily verified that $q=(y_0-y_1)/2$ and $r=|q-y_0|/2$. Consequently, $B^n(q,r)\subsetneq\B^n$ because $|q|+r=|y_0|<1$. Let us now prove that $B_{j^*}(x,t)\subseteq B^n(q,r)$ by showing that, for all $y\in S^{n-1}(q,r)$,
\begin{align*}
j^*_{\B^n}(x,y)=\left(1+\frac{2(1-\max\{|x|,|y|\})}{|x-y|}\right)^{-1}>t
\quad\Leftrightarrow\quad
\frac{1-\max\{|x|,|y|\}}{|x-y|}<\frac{1-t}{2t}
\end{align*}
If $u=\measuredangle XQY$, $\measuredangle OQY=\pi-u$ since $q$ is between the origin and the point $x$. By cosine formula,
\begin{align*}
|y|&=\sqrt{|q|^2+r^2+2r|q|\cos(u)}=\frac{1}{1+t}\sqrt{|x|(1-t)^2+4t^2+4t(1-t)|x|\cos(u)},\\
|x-y|&=\sqrt{|x-q|^2+r^2-2r|x-q|\cos(u)}=\frac{2t}{1+t}\sqrt{1+|x|^2-2|x|\cos(u)}.
\end{align*}
Clearly, the distances $|x|$, $|y|$, and $|x-y|$ are all increasing with respect to $|x|$ and therefore 
\begin{align*}
&\frac{1-\max\{|x|,|y|\}}{|x-y|}
=\frac{1+t-\max\{(1+t)|x|,\sqrt{|x|(1-t)^2+4t^2+4t(1-t)|x|\cos(u)}\}}{2t\sqrt{1+|x|^2-2|x|\cos(u)}}\\
&<\frac{1+t-\max\{(1+t)\cdot0,\sqrt{0\cdot(1-t)^2+4t^2+4t(1-t)\cdot0\cdot\cos(u)}\}}{2t\sqrt{1+0^2-2\cdot0\cdot\cos(u)}}
=\frac{1-t}{2t}.
\end{align*}
Thus, the inclusion of the lemma holds and, since $y_0,y_1\in S_s(x,t)$ and $|y_0-y_1|=2r$, there is no such Euclidean ball with smaller radius than $r$ that contains $B_s(x,t)$.
\end{proof}

\begin{remark}
If $G=\B^n$, $x\in\B^n\setminus\{0\}$, and $0<t<|x|$ and $q=(y_0-y_1)/2$ and $r=|q-y_0|/2$ for the points $y_0,y_1$ of Lemma \ref{lem_colliIntS}, the inclusion $B_s(x,t)\subseteq B^n(q,r)\subsetneq\B^n$ holds but $B_{j^*}(x,t)\nsubseteq B^n(q,r)$ according to computer tests.
\end{remark}

\section{Triangular ratio metric balls in hyperbolic geometry}

Since the hyperbolic metric is conformally invariant unlike the $j^*$-metric or the triangular ratio metric, it is useful to have some ball inclusion results between the hyperbolic metric and these two metrics.

\begin{corollary}\label{cor_rhoIncj}
For $x\in G=\B^n$ and $R>0$, $B_{j^*}(x,k_0)\subseteq B_\rho(x,R)\subseteq B_{j^*}(x,k_1)$ if and only if
\begin{align*}
&0<k_0\leq\max\left\{\frac{(1+|x|){\rm sh}(R/2)}{2+(1+|x|){\rm sh}(R/2)},\frac{(1-|x|)(e^R-1)}{3+e^R-|x|(e^R-1)}\right\}
\quad\text{and}\\
&k_1\geq\frac{(1+|x|)(e^R-1)}{3+e^R+|x|(e^R-1)},
\end{align*}
and, for $x\in G=\B^n$ and $0<k<1$, $B_\rho(x,R_0)\subseteq B_{j^*}(x,k)\subseteq B_\rho(x,R_1)$ if and only if
\begin{align*}
&0<R_0\leq\log\left(1+\frac{4k}{(1-k)(1+|x|)}\right)
\quad\text{and}\\
&R_1\geq\min\left\{2{\rm arsh}\left(\frac{2k}{(1-k)(1+|x|)}\right),\log\left(1+\frac{4k}{(1-k)(1-|x|)}\right)\right\}. 
\end{align*}
\end{corollary}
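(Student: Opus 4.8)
The plan is to reduce both pairs of inclusions to Theorem \ref{thm_distRhoInc} by exploiting the fact that the $j^*$-metric is a strictly increasing function of the distance ratio metric. Indeed, from \eqref{def_j} we have $j^*_G(x,y)={\rm th}(j_G(x,y)/2)$, and since ${\rm th}(\cdot/2)$ is a strictly increasing bijection from $[0,\infty)$ onto $[0,1)$, the inequality $j^*_G(x,y)<k$ is equivalent to $j_G(x,y)<2\,{\rm arth}(k)$. Hence the two families of balls coincide as sets,
\[
B_{j^*}(x,k)=B_j(x,2\,{\rm arth}(k)),\qquad B_j(x,K)=B_{j^*}(x,{\rm th}(K/2)),
\]
and this identity lets me translate each inclusion in the statement into one between a distance ratio metric ball and a hyperbolic ball, to which Theorem \ref{thm_distRhoInc} applies directly.

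First I would treat the inclusions $B_{j^*}(x,k_0)\subseteq B_\rho(x,R)\subseteq B_{j^*}(x,k_1)$. Setting $K_0=2\,{\rm arth}(k_0)$ and $K_1=2\,{\rm arth}(k_1)$, this is exactly $B_j(x,K_0)\subseteq B_\rho(x,R)\subseteq B_j(x,K_1)$, so the first part of Theorem \ref{thm_distRhoInc} characterizes it by logarithmic bounds on $K_0,K_1$. Applying ${\rm th}(\cdot/2)$ to those bounds (which is monotone, hence preserves the inequalities and the ``if and only if'' structure) together with the elementary identity ${\rm th}(\tfrac12\log(1+a))=a/(a+2)$ converts the bounds on the $K_i$ into the rational bounds on the $k_i$. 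Concretely, the three choices $a=(1+|x|){\rm sh}(R/2)$, $a=(1-|x|)(e^R-1)/2$, and $a=(1+|x|)(e^R-1)/2$ produce, after clearing denominators, precisely the two expressions inside the $\max$ for $k_0$ and the lower bound for $k_1$ in the corollary.

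Next I would handle $B_\rho(x,R_0)\subseteq B_{j^*}(x,k)\subseteq B_\rho(x,R_1)$ in the same spirit, now keeping $k$ fixed and writing $K=2\,{\rm arth}(k)=\log\frac{1+k}{1-k}$, so that $B_{j^*}(x,k)=B_j(x,K)$ and the inclusion becomes $B_\rho(x,R_0)\subseteq B_j(x,K)\subseteq B_\rho(x,R_1)$. The second part of Theorem \ref{thm_distRhoInc} then supplies the bounds on $R_0,R_1$ in terms of $e^K$. Since $e^K=\frac{1+k}{1-k}$ gives $e^K-1=\frac{2k}{1-k}$, substituting and simplifying yields the stated forms: $\log(1+2(e^K-1)/(1+|x|))$ becomes $\log(1+4k/((1-k)(1+|x|)))$, the term $2\,{\rm arsh}((e^K-1)/(1+|x|))$ becomes $2\,{\rm arsh}(2k/((1-k)(1+|x|)))$, and $\log((2e^K-1-|x|)/(1-|x|))$ collapses to $\log(1+4k/((1-k)(1-|x|)))$.

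There is no genuine obstacle here; the argument is a monotone change of the radius variable, so the ``if and only if'' is inherited directly from Theorem \ref{thm_distRhoInc} and no separate sharpness discussion is needed. The only care required is the bookkeeping in the algebraic identities, in particular checking that ${\rm th}(\tfrac12\log(1+a))=a/(a+2)$ and that $(2e^K-1-|x|)/(1-|x|)$ reduces to $1+4k/((1-k)(1-|x|))$ after the substitution $e^K=(1+k)/(1-k)$.
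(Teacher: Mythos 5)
Your proposal is correct and follows exactly the paper's route: the paper's proof is the one-line remark that the corollary ``follows from Theorem \ref{thm_distRhoInc} and the formula \eqref{def_j}'', i.e.\ the same monotone change of radius $B_{j^*}(x,k)=B_j(x,2\,{\rm arth}(k))$ that you carry out in detail. Your algebraic verifications, including ${\rm th}(\tfrac12\log(1+a))=a/(a+2)$ and the reduction of $(2e^K-1-|x|)/(1-|x|)$ to $1+4k/((1-k)(1-|x|))$ under $e^K=(1+k)/(1-k)$, all check out.
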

\begin{proof}
Follows from Theorem \ref{thm_distRhoInc} and the formula \eqref{def_j}.
\end{proof}

The next result is very trivial but needed to check that other results hold in the special case $x=0$.

\begin{lemma}\label{lem_sBrhoOrig}
For $G=\B^n$ and $0<t<1$, $B_s(0,s)=B_\rho(0,\log((1+3t)/(1-t)))$.
\end{lemma}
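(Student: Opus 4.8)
The plan is to reduce everything to the origin-centered case, where both metrics become functions of $|y|$ alone, and then match the two radii. Since $x=0$, Lemma~\ref{lem_sForx0} gives $B_s(0,t)=B^n(0,2t/(1+t))$, so a point $y$ lies in $B_s(0,t)$ precisely when $|y|<2t/(1+t)$. Thus it suffices to show that the hyperbolic ball $B_\rho(0,R)$ with $R=\log((1+3t)/(1-t))$ is exactly the Euclidean ball of the same radius $2t/(1+t)$.

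To compute $B_\rho(0,R)$ I would invoke Lemma~\ref{lem_rhoB} with $x=0$. There the center becomes $y=0$ and the Euclidean radius simplifies to $h=k={\rm th}(R/2)$, so $B_\rho(0,R)=B^n(0,{\rm th}(R/2))$. Hence the whole statement collapses to the single scalar identity
\begin{align*}
{\rm th}\frac{R}{2}=\frac{2t}{1+t}
\quad\text{with}\quad
R=\log\frac{1+3t}{1-t}.
\end{align*}

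The remaining step is a direct verification of this identity. First I would note $e^{R}=(1+3t)/(1-t)$, so $e^{R/2}=\sqrt{(1+3t)/(1-t)}$, and then expand
\begin{align*}
{\rm th}\frac{R}{2}=\frac{e^{R/2}-e^{-R/2}}{e^{R/2}+e^{-R/2}}=\frac{e^{R}-1}{e^{R}+1}.
\end{align*}
Substituting $e^{R}=(1+3t)/(1-t)$ gives numerator $(1+3t)-(1-t)=4t$ and denominator $(1+3t)+(1-t)=2+2t$, so the ratio is $4t/(2+2t)=2t/(1+t)$, as required. This confirms ${\rm th}(R/2)=2t/(1+t)$ and therefore $B_\rho(0,R)=B^n(0,2t/(1+t))=B_s(0,t)$.

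The only thing to watch is that the stated $R$ is genuinely positive and that everything stays well-defined for $0<t<1$: indeed $(1+3t)/(1-t)>1$ on this range, so $R>0$ and the logarithm and square root are legitimate, while $2t/(1+t)\in(0,1)$ keeps both balls inside $\B^n$. There is no real obstacle here; the content is entirely in recognizing that $x=0$ trivializes both Lemma~\ref{lem_rhoB} and Lemma~\ref{lem_sForx0} to equal Euclidean balls, after which the claim is the elementary hyperbolic-tangent computation above.
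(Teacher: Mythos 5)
Your proof is correct, and it shares the paper's key reduction: both arguments use Lemma \ref{lem_sForx0} to identify $B_s(0,t)$ with the Euclidean ball $B^n(0,2t/(1+t))$ (the statement's ``$B_s(0,s)$'' is a typo for $B_s(0,t)$, which you implicitly corrected). Where you diverge is in handling the hyperbolic side. The paper never describes $B_\rho(0,R)$ as a Euclidean ball; it takes a point $y$ with $|y|=2t/(1+t)$ and computes its hyperbolic distance to the origin directly from the defining formula \eqref{def_rho}, namely $\rho_{\B^n}(y,0)=2\,{\rm arsh}\bigl(|y|/\sqrt{1-|y|^2}\bigr)=2\,{\rm arsh}\bigl(2t/\sqrt{(1-t)(1+3t)}\bigr)=\log\bigl((1+3t)/(1-t)\bigr)$, so that the sphere $S_s(0,t)$ is exactly the hyperbolic sphere of the stated radius. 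You instead invoke Lemma \ref{lem_rhoB} with $x=0$ to convert $B_\rho(0,R)$ into the Euclidean ball $B^n(0,{\rm th}(R/2))$ and then verify the scalar identity ${\rm th}(R/2)=(e^R-1)/(e^R+1)=4t/(2+2t)=2t/(1+t)$, which is a correct computation. The two routes are of comparable length; yours has the small advantage of reducing the claim to a transparent hyperbolic-tangent identity and sidestepping the ${\rm arsh}$-to-logarithm simplification (which in the paper's version requires checking that $z+\sqrt{z^2+1}$ simplifies to $\sqrt{(1+3t)/(1-t)}$), while the paper's version is more self-contained in that it needs only the defining formula \eqref{def_rho} rather than the ball-mapping result of Lemma \ref{lem_rhoB}. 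Your sanity checks on $R>0$ and $2t/(1+t)\in(0,1)$ are fine but not essential, since both ball identifications hold as exact set equalities.
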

\begin{proof}
By formula \eqref{def_rho} and Lemma \ref{lem_sForx0},
\begin{align*}
\rho_{\B^n}(y,0) 
=2{\rm arsh}\left(\frac{|y|}{\sqrt{1-|y|^2}}\right)
=2{\rm arsh}\left(\frac{2t}{\sqrt{(1-t)(1+3t)}}\right)
=\log\left(\frac{1+3t}{1-t}\right).
\end{align*}
\end{proof}

Recall that, if $G=\B^n$, we have explicit formulas for two points $y_0,y_1\in S_s(x,t)$ with any $x\in\B^n\setminus\{0\}$ and $0<t<1$ by Lemma \ref{lem_colliIntS}, from which follows that we can create upper and lower bounds for the radii in the inclusion of the triangular ratio metric balls and hyperbolic balls.

\begin{lemma}\label{lem_sRhoBounds}
For $x\in G=\B^n$ and $R>0$, $B_s(x,t_0)\subseteq B_\rho(x,R)\subseteq B_s(x,t_1)$ can hold only if
\begin{align*}
&0<t_0\leq\frac{{\rm th}(R/2)(1-|x|^2)}{2(1-|x|{\rm th}(R/2))-|2|x|-{\rm th}(R/2)(1+|x|^2)|}
\quad\text{and}\\
&t_1\geq\frac{(1+|x|)(e^R-1)}{3+e^R+|x|(e^R-1)},
\end{align*}
and, for $x\in G=\B^n$ and $0<t<1$, $B_\rho(x,R_0)\subseteq B_s(x,t)\subseteq B_\rho(x,R_1)$ can hold only if
\begin{align*}
&0<R_0\leq\log\left(1+\frac{4t}{(1-t)(1+|x|)}\right)
\quad\text{and}\quad
R_1\geq2{\rm arsh}\left(\frac{2t}{\sqrt{l(t,|x|)}}\right),
\quad\text{where}\\
&l(t,|x|)=\max\{(1+t)(1+|x|)(1-3t+|x|(1+t)),\\
&\quad\quad\quad\quad\quad\quad\quad
(1-t)(1-|x|)(1+3t-|x|(1-t))\}.
\end{align*}
\end{lemma}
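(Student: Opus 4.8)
The plan is to prove all four inequalities as \emph{necessary} conditions by testing the inclusions against the two explicit boundary points $y_0,y_1\in S_s(x,t)\cap L(0,x)$ supplied by Lemma \ref{lem_colliIntS} and against the two collinear extreme points of the hyperbolic ball. I would first assume $x\neq0$, since the degenerate case $x=0$ reduces to concentric Euclidean balls and follows from Lemma \ref{lem_sBrhoOrig}. Throughout I would write $k=\text{th}(R/2)$ and use Lemma \ref{lem_rhoB} to realize $B_\rho(x,R)$ as a Euclidean ball $B^n(y,h)$; intersecting its sphere with $L(0,x)$ and simplifying gives the two collinear points at signed radii $(|x|+k)/(1+|x|k)$ (the far point $p_+$) and $(|x|-k)/(1-|x|k)$ (the near point $p_-$). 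The observation that makes everything computable is that all four test points lie on $L(0,x)$, so that $s_{\B^n}(x,\cdot)$ along this line is given \emph{exactly} by the collinear formula of Lemma \ref{lem_sCollinear}, while $\rho_{\B^n}(x,\cdot)$ is obtained directly from \eqref{def_rho}.

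For the chain $B_s(x,t_0)\subseteq B_\rho(x,R)\subseteq B_s(x,t_1)$ I would argue as follows. For the bound on $t_0$, a necessary condition for $B_s(x,t_0)\subseteq B_\rho(x,R)$ is, by a continuity argument on interior points approaching the boundary sphere, that the near point $y_1$ lie in $\overline{B_\rho(x,R)}$; comparing the signed radial coordinate of $y_1$ with that of $p_-$ and solving for $t_0$ produces the stated bound. Here the minimum in the formula for $y_1$ splits the analysis into the cases $t_0\le|x|$ and $t_0>|x|$, and I expect these to reassemble precisely into the single absolute-value expression in the denominator. For the bound on $t_1$, the necessary condition is that the far point $p_+$ of $B_\rho(x,R)$ lie in $\overline{B_s(x,t_1)}$; since $p_+$ is collinear with $x$ and the origin, Lemma \ref{lem_sCollinear} gives $s_{\B^n}(x,p_+)=k(1+|x|)/(2-k+k|x|)$ exactly, and rewriting this through $e^R=(1+k)/(1-k)$ yields the claimed lower bound for $t_1$.

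For the chain $B_\rho(x,R_0)\subseteq B_s(x,t)\subseteq B_\rho(x,R_1)$ the roles are reversed. The bound on $R_0$ comes from requiring the far collinear point of $B_\rho(x,R_0)$ to belong to $\overline{B_s(x,t)}$; setting $s_{\B^n}$ of this point equal to $t$, solving for $k_0=\text{th}(R_0/2)$, and converting back to $R_0$ gives the logarithmic bound. The bound on $R_1$ comes from requiring both $y_0$ and $y_1$ to lie in $\overline{B_\rho(x,R_1)}$, so that $R_1\ge\max\{\rho_{\B^n}(x,y_0),\rho_{\B^n}(x,y_1)\}$. Evaluating \eqref{def_rho} at each point yields $\text{sh}^2(\rho_{\B^n}(x,y_i)/2)=4t^2/L_i$ for explicit polynomials $L_i$, and I would then verify the inequality $L_1\le L_0$ (reducing after cancellation to $4t(1-|x|)>0$ when $t\le|x|$ and to $4|x|(1+t)>0$ when $t>|x|$) to conclude that $y_1$ is always the hyperbolically farther point. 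The two sub-cases $t\le|x|$ and $t>|x|$ of $y_1$ give the two polynomials inside $l(t,|x|)$, and I expect the maximum to select exactly the case-relevant one, the other being smaller (indeed negative when it is not relevant), which justifies the single $\max$.

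The routine part is the algebraic simplification of the several radicals and rational expressions; the genuinely delicate points are twofold. First, I must confirm that the case splits induced by the minimum in $y_1$ collapse into the compact absolute-value and maximum forms rather than forcing a piecewise statement. Second, for the $R_1$ bound I must establish that $y_1$, not $y_0$, is always the hyperbolically farther of the two collinear boundary points, since otherwise the extremal point, and hence the correct polynomial, would change. Everything else is a direct substitution using Lemmas \ref{lem_rhoB}, \ref{lem_sCollinear}, and \ref{lem_colliIntS}, together with the continuity argument used to pass between the open balls and their boundary spheres.
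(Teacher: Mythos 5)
Your proposal is correct and takes essentially the same route as the paper's proof: both derive all four bounds as necessary conditions by evaluating $s_{\B^n}$ and $\rho_{\B^n}$ at the collinear points $y_0,y_1\in S_s(x,t)\cap L(0,x)$ from Lemma \ref{lem_colliIntS} and at the collinear points of the hyperbolic sphere obtained via Lemma \ref{lem_rhoB} (signed radii $(|x|\pm{\rm th}(R/2))/(1\pm|x|{\rm th}(R/2))$), with Lemma \ref{lem_sCollinear} giving the exact $s$-values along the line and Lemma \ref{lem_sBrhoOrig} covering $x=0$, exactly as you plan. One small correction to a parenthetical: the case-irrelevant polynomial inside $l(t,|x|)$ need not be negative — $(1-t)(1-|x|)(1+3t-|x|(1-t))$ is always positive — but your main claim survives, since the difference of the two polynomials equals $4(|x|-t)(1+|x|t)$, so the maximum does select precisely the case-relevant one.
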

\begin{proof}
Let us first consider the latter part of the result. Suppose first that $x\in\B^n\setminus\{0\}$. If $y_0,y_1\in S_s(x,t)$ are as in Lemma \ref{lem_colliIntS},
\begin{align*}
&\rho_{\B^n}(x,y_0)
=\log\left(1+\frac{4t}{(1-t)(1+|x|)}\right),\\
&\rho_{\B^n}(x,y_1)
=2{\rm arsh}\left(\frac{2t}{\sqrt{(1+t)(1+|x|)(1-3t+|x|(1+t))}}\right)\quad\text{if}\quad t<|x|,\\
&\rho_{\B^n}(x,y_1)
=\log\left(1+\frac{4t}{(1-t)(1-|x|)}\right)\quad\text{if}\quad t\geq|x|.
\end{align*}
Clearly, $B_\rho(x,R_0)\subseteq B_s(x,t)\subseteq B_\rho(x,R_1)$ can only hold if $R_0\leq\rho_{\B^n}(x,y_0)$ and $R_1\geq\rho_{\B^n}(x,y_1)$.
Note that, since
\begin{align*}
\left(1+\frac{4t}{(1-t)(1-|x|)}\right)
=2{\rm arsh}\left(\frac{2t}{\sqrt{(1-t)(1-|x|)(1+3t-|x|(1-t))}}\right),
\end{align*}
the distance $\rho_{\B^n}(x,y_1)$ can be written as $2{\rm arsh}(2t/\sqrt{l(t,|x|})$ where $l(t,|x|)$ is as in the lemma. Because these bounds of $R_0$ and $R_1$ hold also in the special case $x=0$ by Lemma \ref{lem_sBrhoOrig}, the latter part of the result follows.

Now, we can solve that
\begin{align*}
R\leq\log\left(1+\frac{4t}{(1-t)(1+|x|)}\right) 
\quad\Leftrightarrow\quad
t\geq\frac{(1+|x|)(e^R-1)}{3+e^R+|x|(e^R-1)},
\end{align*}
which proves the lower bound of $t_1$.

For $x\in\B^n\setminus\{0\}$, suppose that $R=\rho_{\B^n}(x,y_1)$ with $y_1$ as in Lemma \ref{lem_colliIntS} for $0<t<1$. Now, the circles $S_\rho(x,R)$ and $S_s(x,t)$ intersect at the point $y_1$. Since $y_1=S_\rho(x,R)\cap L(0,x)\cap B(0,|x|)$, it follows from Lemma \ref{lem_rhoB} that
\begin{align*}
y_1=\frac{x(1-{\rm th}^2(R/2))}{1-|x|^2{\rm th}^2(R/2)}-\frac{x(1-|x|^2){\rm th}(R/2)}{|x|(1-|x|^2{\rm th}^2(R/2))}=\frac{x(|x|-{\rm th}(R/2))}{|x|(1-|x|^2{\rm th}^2(R/2))}.    
\end{align*}
By Lemma \ref{lem_sCollinear}, we have
\begin{align*}
t=s_{\B^n}(x,y_1)=\frac{{\rm th}(R/2)(1-|x|^2)}{2(1-|x|{\rm th}(R/2))-|2|x|-{\rm th}(R/2)(1+|x|^2)|}. 
\end{align*}
Thus, the upper bound of $t_0$ follows, too.
\end{proof}

The next result follows directly from Corollary \ref{cor_rhoIncj} and, while all of its bounds might not be sharp, we know by Lemma \ref{lem_sRhoBounds} and its proof that the upper bound of $t_0$ and the lower bound of $R_1$ below are the best ones possible in the case $t\geq|x|$.

\begin{corollary}\label{cor_sRho}
For $x\in G=\B^n$ and $R>0$, $B_s(x,t_0)\subseteq B_\rho(x,R)\subseteq B_s(x,t_1)$ if
\begin{align*}
&0<t_0\leq\max\left\{\frac{(1+|x|){\rm sh}(R/2)}{2+(1+|x|){\rm sh}(R/2)},\frac{(1-|x|)(e^R-1)}{3+e^R-|x|(e^R-1)}\right\}
\quad\text{and}\\
&t_1\geq\left\{\frac{\sqrt{2}(1+|x|)(e^R-1)}{3+e^R+|x|(e^R-1)},\frac{(1+|x|)(e^R-1)}{4}\right\},
\end{align*}
and, for $x\in G=\B^n$ and $0<t<1$, $B_\rho(x,R_0)\subseteq B_s(x,t)\subseteq B_\rho(x,R_1)$ if
\begin{align*}
&0<R_0\leq\log\left(1+\frac{4k}{(1-k)(1+|x|)}\right)
\quad\text{with}\quad
k=\frac{t}{\min\{(1+t),\sqrt{2}\}}\quad\text{and}\\
&R_1\geq\min\left\{2{\rm arsh}\left(\frac{2t}{(1-t)(1+|x|)}\right),\log\left(1+\frac{4t}{(1-t)(1-|x|)}\right)\right\}. 
\end{align*}
\end{corollary}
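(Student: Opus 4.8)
The plan is to obtain every inclusion by chaining the two ingredients already proven: the $\rho$--$j^*$ inclusions of Corollary \ref{cor_rhoIncj} and the $j^*$--$s$ inclusions of Lemma \ref{lem_jSBinG}. Since $\B^n$ is convex, the sharper form $B_{j^*}(x,t/\min\{1+t,\sqrt{2}\})\subseteq B_s(x,t)\subseteq B_{j^*}(x,t)$ of Lemma \ref{lem_jSBinG} is available, and this is what links the $s$-balls to the $j^*$-balls that Corollary \ref{cor_rhoIncj} controls. No new geometry is needed; the work is in substituting the right radius into each cited result and, in one place, inverting a monotone relation.

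For the first pair of inclusions I would treat the two sides separately. For the lower one, the always-valid inclusion $B_s(x,t_0)\subseteq B_{j^*}(x,t_0)$ from Lemma \ref{lem_jSBinG} reduces the task to $B_{j^*}(x,t_0)\subseteq B_\rho(x,R)$; setting $k_0=t_0$ in Corollary \ref{cor_rhoIncj} produces exactly the stated $\max$-bound on $t_0$. For the upper one, I would chain $B_\rho(x,R)\subseteq B_{j^*}(x,k_1)\subseteq B_s(x,t_1)$, where $k_1=(1+|x|)(e^R-1)/(3+e^R+|x|(e^R-1))$ is the threshold furnished by Corollary \ref{cor_rhoIncj}. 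The second inclusion requires inverting the convex-domain bound of Lemma \ref{lem_jSBinG}: since $B_{j^*}(x,t_1/\min\{1+t_1,\sqrt{2}\})\subseteq B_s(x,t_1)$ and the map $t_1\mapsto t_1/\min\{1+t_1,\sqrt{2}\}$ is increasing, one gets $B_{j^*}(x,k_1)\subseteq B_s(x,t_1)$ precisely when $t_1\geq\min\{k_1/(1-k_1),\sqrt{2}\,k_1\}$, the two branches corresponding to $t_1\le\sqrt{2}-1$ and $t_1>\sqrt{2}-1$, with the branch attaining the smaller value being the active one. Substituting $k_1$ and using the simplification $1-k_1=4/(3+e^R+|x|(e^R-1))$ gives $k_1/(1-k_1)=(1+|x|)(e^R-1)/4$ and $\sqrt{2}\,k_1=\sqrt{2}(1+|x|)(e^R-1)/(3+e^R+|x|(e^R-1))$, which are exactly the two quantities in the stated $t_1$-bound.

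For the second pair of inclusions the roles are reversed but the method is identical. For $B_\rho(x,R_0)\subseteq B_s(x,t)$ I would set $k=t/\min\{1+t,\sqrt{2}\}$ so that $B_{j^*}(x,k)\subseteq B_s(x,t)$ by Lemma \ref{lem_jSBinG}, and then impose $B_\rho(x,R_0)\subseteq B_{j^*}(x,k)$, which by Corollary \ref{cor_rhoIncj} holds for $R_0\le\log(1+4k/((1-k)(1+|x|)))$, the stated bound. For $B_s(x,t)\subseteq B_\rho(x,R_1)$ I would use $B_s(x,t)\subseteq B_{j^*}(x,t)$ from Lemma \ref{lem_jSBinG} and then $B_{j^*}(x,t)\subseteq B_\rho(x,R_1)$, applying Corollary \ref{cor_rhoIncj} with $k=t$ to obtain $R_1\ge\min\{2\,{\rm arsh}(2t/((1-t)(1+|x|))),\log(1+4t/((1-t)(1-|x|)))\}$.

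The only step demanding genuine care is the inversion producing the $t_1$-bound: one must track which branch of $\min\{1+t_1,\sqrt{2}\}$ is active and verify that the resulting threshold coincides with $\min\{k_1/(1-k_1),\sqrt{2}\,k_1\}$. Everything else is a direct substitution into the two cited results, in line with the assertion that the statement follows directly from Corollary \ref{cor_rhoIncj}. Since the corollary asserts only sufficiency (an ``if'' statement), the chaining above completes the proof; the optimality of the bound on $t_0$ and of the bound on $R_1$ in the regime $t\ge|x|$ is not re-derived here, as it is already secured by Lemma \ref{lem_sRhoBounds} and its proof.
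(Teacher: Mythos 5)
Your proposal is correct and takes essentially the same route as the paper, whose entire proof is the remark that the result follows from Lemma \ref{lem_jSBinG} and Corollary \ref{cor_rhoIncj}: your chaining of the $j^*$--$s$ inclusions (with the convex $\sqrt{2}$-improvement) through the $\rho$--$j^*$ inclusions, including the simplification $1-k_1=4/(3+e^R+|x|(e^R-1))$ giving $k_1/(1-k_1)=(1+|x|)(e^R-1)/4$, is exactly the substitution the paper intends. You also correctly read the stated bound on $t_1$ as a $\min$ of the two displayed quantities, repairing what is evidently a typographical omission in the statement.
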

\begin{proof}
Follows from Lemma \ref{lem_jSBinG} and Corollary \ref{cor_rhoIncj}.
\end{proof}

Computer test suggest that the following conjecture holds and, by comparing it to Lemma \ref{lem_sRhoBounds} and Corollary \ref{cor_sRho}, we see that, to prove this result, it is enough to show that the inclusion $B_\rho(x,R_0)\subseteq B_s(x,t)$ holds for $0<t<1$ and the inclusion $B_s(x,t)\subseteq B_\rho(x,R_1)$ for $0<t<|x|$.

\begin{conjecture}\label{conj_srhoB}
For $x\in G=\B^n$ and $R>0$, $B_s(x,t_0)\subseteq B_\rho(x,R)\subseteq B_s(x,t_1)$ if and only if
\begin{align*}
&0<t_0\leq\frac{{\rm th}(R/2)(1-|x|^2)}{2(1-|x|{\rm th}(R/2))-|2|x|-{\rm th}(R/2)(1+|x|^2)|}
\quad\text{and}\\
&t_1\geq\frac{(1+|x|)(e^R-1)}{3+e^R+|x|(e^R-1)},
\end{align*}
and, for $x\in G=\B^n$ and $0<t<1$, $B_\rho(x,R_0)\subseteq B_s(x,t)\subseteq B_\rho(x,R_1)$ if and only if
\begin{align*}
&0<R_0\leq\log\left(1+\frac{4t}{(1-t)(1+|x|)}\right)
\quad\text{and}\quad
R_1\geq2{\rm arsh}\left(\frac{2t}{\sqrt{l(t,|x|)}}\right),
\quad\text{where}\\
&l(t,|x|)=\max\{(1+t)(1+|x|)(1-3t+|x|(1+t)),\\
&\quad\quad\quad\quad\quad\quad\quad
(1-t)(1-|x|)(1+3t-|x|(1-t))\}.
\end{align*}
\end{conjecture}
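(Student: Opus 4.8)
The necessity of all four bounds is already contained in Lemma \ref{lem_sRhoBounds}, so the entire task is to establish sufficiency, and, as noted in the paragraph preceding the conjecture, this amounts to proving the two inclusions $B_\rho(x,R_0)\subseteq B_s(x,t)$ for $0<t<1$ (with $R_0=\log(1+4t/((1-t)(1+|x|)))$) and $B_s(x,t)\subseteq B_\rho(x,R_1)$ for $0<t<|x|$ (with $R_1=2\,\mathrm{arsh}(2t/\sqrt{l(t,|x|)})$), the range $t\ge|x|$ of the second inclusion being covered already by Corollary \ref{cor_sRho}. The first step in both cases is to reduce to the plane: since $\rho_{\B^n}$ and $s_{\B^n}$ are both invariant under the rotations of $\R^n$ fixing the line $L(0,x)$, every ball in sight is a body of revolution about this axis, so it suffices to compare the two profile curves in the two-dimensional plane through $0$ and $x$. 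In that plane the collinear points $y_0,y_1$ of Lemma \ref{lem_colliIntS} are exactly the points where the competing spheres are forced to touch, and the whole problem becomes the claim that these axis points are the extremal ones.

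For the inclusion $B_\rho(x,R_0)\subseteq B_s(x,t)$ I would avoid computing $s$ off the axis by invoking the collinear majorant of Lemma \ref{lem_sCollinear}: writing $g(y)=|x-y|/(2-|x+y|)\ge s_{\B^n}(x,y)$, the sublevel set $\{g\le t\}$ is contained in $B_s(x,t)$ and agrees with it along $L(0,x)$. Now $\{g\le t\}$ is precisely the Cartesian-oval region $\{\,\phi\le 2t\,\}$ with $\phi(y)=|y-x|+t|y+x|$, a convex function of $y$ with foci $\pm x$, and a direct computation confirms $\phi(y_0)=2t$. Since $\overline{B}_\rho(x,R_0)$ is the Euclidean ball of Lemma \ref{lem_rhoB} and $\phi$ is convex, the containment $\overline{B}_\rho(x,R_0)\subseteq\{\phi\le 2t\}$ is equivalent to $\max_{S_\rho(x,R_0)}\phi\le 2t$; after the planar reduction this is a single-variable estimate, and the plan is to show that $\phi$ restricted to the Euclidean circle $S_\rho(x,R_0)$ attains its maximum $2t$ at $y_0$.

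For the inclusion $B_s(x,t)\subseteq B_\rho(x,R_1)$ I would parametrise $S_s(x,t)$ itself. By \eqref{def_rho}, maximising $\rho_{\B^n}(x,y)$ over $y\in S_s(x,t)$ is the same as maximising $|x-y|^2/(1-|y|^2)$, so it is enough to show that this ratio is largest at $y=y_1$. Using Theorem \ref{thm_zus} one can write every point of $S_s(x,t)$ in the plane as $y=z(u)(1-h(u)e^{-v(u)i})$ for the parameter $u=\measuredangle XOZ$, turn $|x-y|^2/(1-|y|^2)$ into an explicit function of $u$, and reduce the claim to showing that this function is maximised at the endpoint of the admissible $u$-range corresponding to $y_1$; equivalently, one may show that the sphere $S_s(x,t)$ lies inside the Euclidean ball $\overline{B}_\rho(x,R_1)$ by comparing their curvatures at the common point $y_1$ together with a global non-crossing argument based on the convexity of both bodies of revolution.

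The main obstacle in both halves is the same: because $s_{\B^n}$ has no closed form off the axis (the reflection point $z$ of Theorem \ref{thm_findzinB} must be solved for), everything funnels into a one-variable optimisation with nested radicals, and the real content is to prove \emph{globally} that the extremum sits at the axis point rather than merely verifying that it is a local extremum there. For the first inclusion there is an additional danger that the collinear majorant $g$ is too lossy off the axis, so that $\phi$ could exceed $2t$ on part of the hyperbolic circle even though $s\le t$ holds; if that occurs one must replace $g$ by a sharper comparison or argue directly with the parametrisation of $S_s(x,t)$. Controlling these monotonicity statements in $u$ uniformly in the parameters $|x|$ and $t$ is precisely the step that keeps the statement a conjecture rather than a theorem.
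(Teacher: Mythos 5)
First, a point of calibration: the statement you are proving is stated in the paper as an open \emph{conjecture}, supported only by computer tests; the paper's entire ``proof content'' is the remark preceding it, namely that necessity of all four bounds follows from Lemma \ref{lem_sRhoBounds} and that sufficiency reduces to proving $B_\rho(x,R_0)\subseteq B_s(x,t)$ for $0<t<1$ and $B_s(x,t)\subseteq B_\rho(x,R_1)$ for $0<t<|x|$, the case $t\geq|x|$ being covered by Corollary \ref{cor_sRho}. Your opening paragraph reproduces this reduction correctly, and your added structure in the first half is sound and genuinely useful: the sublevel set $\{g\leq t\}$ of the collinear majorant from Lemma \ref{lem_sCollinear} is indeed the convex Cartesian-oval region $\{\phi\leq 2t\}$ with $\phi(y)=|y-x|+t|y+x|$, one checks directly that $\phi(y_0)=2t$ for the axis point $y_0$ of Lemma \ref{lem_colliIntS}, and at $x=0$ the reduction is exact, since there $B_\rho(0,R_0)=B^n(0,2t/(1+t))=\{\phi\leq 2t\}^{\circ}=B_s(0,t)$ by Lemmas \ref{lem_rhoB} and \ref{lem_sForx0}. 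The reduction of the containment $\overline{B}_\rho(x,R_0)\subseteq\{\phi\leq 2t\}$ to the one-variable bound $\max_{S_\rho(x,R_0)}\phi\leq 2t$ via convexity of $\phi$ is also correct.

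Nevertheless, this is not a proof, and you say so yourself; the gaps you flag are exactly the substantive ones. In the first half, since $g(y)>s_{\B^n}(x,y)$ strictly off the axis, the oval $\{\phi\leq 2t\}$ is a strict subset of $\overline{B}_s(x,t)$ away from $L(0,x)$, so the claim $\max_{S_\rho(x,R_0)}\phi\leq 2t$ is \emph{stronger} than the conjecture and may simply be false for $|x|$ close to $1$; you have neither proved it nor verified it numerically, so the approach could collapse at its first step rather than merely be incomplete. In the second half, the parametrisation via Theorem \ref{thm_zus} only yields \emph{candidate} points of $S_s(x,t)$ (as the paper stresses after that theorem, the bisection condition is necessary but not sufficient for \eqref{s_xyz}), so the function of $u$ you propose to maximise is not automatically the profile of $S_s(x,t)$; and the ``global non-crossing argument based on the convexity of both bodies'' assumes convexity of $B_s(x,t)$, which is established nowhere in the paper and is itself a nontrivial open claim. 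So the proposal is a reasonable research plan whose preliminary reductions are correct, but the two global extremality statements --- that the axis points $y_0$ and $y_1$ are extremal for $\phi$ on $S_\rho(x,R_0)$ and for $|x-y|^2/(1-|y|^2)$ on $S_s(x,t)$ respectively --- remain entirely unproven, which is precisely why the statement stands as a conjecture in the paper.
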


\begin{figure}[ht]
    \centering
    \begin{tikzpicture}
    \node[inner sep=0pt] at (0,0)
    {\includegraphics[width=7cm,trim={1cm 2.5cm 1cm 3cm},clip]{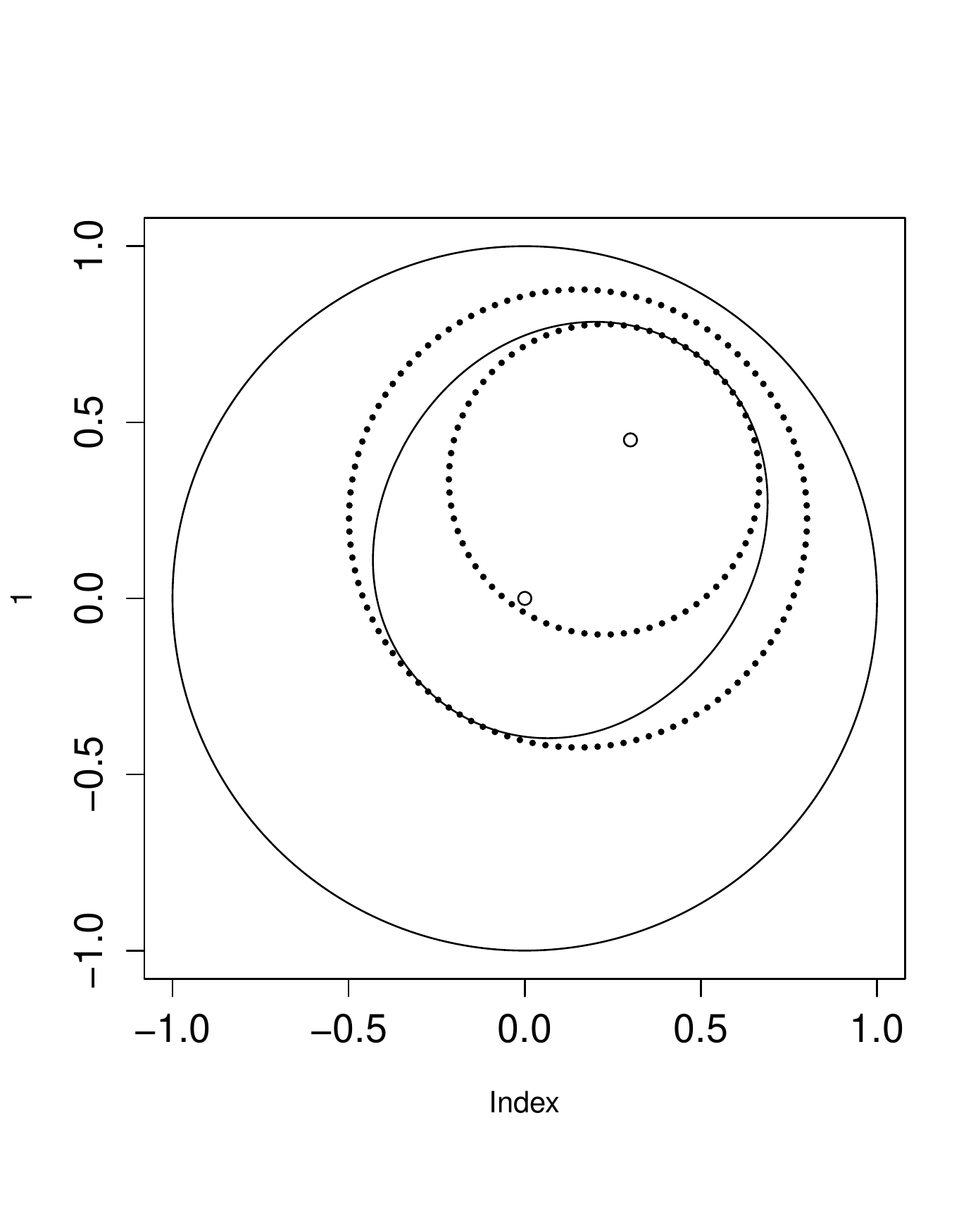}};
    \node[scale=1.3] at (0.3,0.6) {$0$};
    \node[scale=1.3] at (1.2,1.9) {$x$};
    \end{tikzpicture}
    \caption{The triangular ratio metric circle $S_s(x,t)$ for $G=\B^2$, $x=0.3+0.45i$ and $t=0.5$ as solid line and the hyperbolic circles $S_\rho(x,R_0)$ and $S_\rho(x,R_1)$ as dotted line, when $R_0$ is as large as possible and $R_1$ as small as possible within the bounds of Conjecture \ref{conj_srhoB}.}
    \label{fig5}
\end{figure}

Figure \ref{fig5} depicts the sharp inclusion of a triangular ratio metric balls and two hyperbolic metric balls found with Conjecture \ref{conj_srhoB}.

\addcontentsline{toc}{section}{References} 
\renewcommand{\refname}{References} 


\end{document}